\PassOptionsToPackage{svgnames}{xcolor}
\documentclass{amsart}

\usepackage{breqn}

\usepackage{stmaryrd}
\usepackage[normalem]{ulem}
\usepackage{amsthm}
\usepackage[T1]{fontenc}
\usepackage{lmodern}
\usepackage[utf8]{inputenc}
\usepackage[font=small]{caption}
\usepackage{todonotes}

\usepackage{tikz}
\graphicspath{{fig/}}
\usepackage[doi=false,isbn=false,backend=biber,style=alphabetic]{biblatex}
\usepackage{microtype}

\usepackage[shortlabels]{enumitem}

\usepackage{thmtools}
\usepackage{hyperref}
\usepackage{cleveref}

\newtheorem{theorem}{Theorem}
\newtheorem*{maintheorem}{Main Theorem}
\newtheorem{conjecture}[theorem]{Conjecture}
\newtheorem*{conjecture*}{Conjecture}
\newtheorem{corollary}[theorem]{Corollary}
\newtheorem*{corollary*}{Corollary}
\newtheorem{proposition}[theorem]{Proposition}
\newtheorem{proposition*}{Proposition}
\newtheorem{lemma}[theorem]{Lemma}
\newtheorem{lemma*}{Lemma}
\newtheorem*{questions*}{Questions}

\theoremstyle{definition}

\newtheorem{definition}[theorem]{Definition}
\newtheorem*{note*}{Note}

\newcommand{\lib}{\llbracket}
\newcommand{\rib}{\rrbracket}

\newcommand{\C}{\mathbb C}
\newcommand{\D}{\mathbb D}
\newcommand{\Hp}{\mathbb H}
\newcommand{\N}{\mathbb N}
\newcommand{\R}{\mathbb R}
\newcommand{\Q}{\mathbb Q}
\newcommand{\Z}{\mathbb Z}

\newcommand{\ie}{\emph{i.e.\ }}

\newcommand{\eps}{\epsilon}
\newcommand{\tends}{\longrightarrow}

\newcommand{\on}[1]{\operatorname{#1}}
\newcommand{\cal}[1]{\mathcal{#1}}
\newcommand{\ov}[1]{\overline{#1}}
\newcommand{\setof}[2]{\big\{{#1}\,;\,{#2}\big\}}

\newcommand{\dist}{\operatorname{dist}}
\newcommand{\crit}{\operatorname{crit}}

\newcommand{\rad}{\on{rad}}

\renewcommand{\Re}{\operatorname{Re}}
\renewcommand{\Im}{\operatorname{Im}}

\newcommand{\loctitle}[1]{\bigskip\noindent\textbf{#1}\medskip}

\title[Biggest bounded type Siegel disks and critical points]{Biggest bounded type Siegel disks of monic polynomials include those that stick to all critical points}

\author{Xavier Buff}
\author{Arnaud Chéritat}
\author{Pascale Roesch}
\address{Authors affiliation: Univ Toulouse, INSA Toulouse, CNRS, IMT, Toulouse, France.}

\begin{document}

\begin{abstract}
We prove that for all degree $d\geq 2$ and all bounded type irrational $\theta$, in the space of monic polynomials having a period $1$ Siegel disk $\Delta$ of rotation number $\theta$, the maximum locus of the conformal radius of $\Delta$ with respect to its fixed point contains polynomials having all critical points on the boundary of $\Delta$. We apply this to reduce a conjecture of Douady (optimality of the Bruno condition) to a weaker statement. 
\end{abstract}

\maketitle

\tableofcontents


\section{Definitions and main result}

\subsection{Statement}

Given a simply connected strict open subset $U$ of $\C$ and a point $u\in U$, the \emph{conformal radius} of $U$ with respect to $u$ is the unique $r\in(0,+\infty]$ such that there exists a conformal map $\phi:B(0,r)\to U$ with $\phi(0)=u$ and $\phi'(0)=1$.
We denote it $\rad (U,u)$.
We see this as a way to measure the inner size of $U$ as seen from $u$: in particular we recall that $\frac{1}{4}\rad (U,u) \leq d(u,\partial U) \leq \rad (U,u)$.

Let $\theta$ be an irrational real number and $\rho = e^{2\pi i\theta}$. We denote
\[R(z) = R_\theta(z) = e^{2\pi i\theta} z\]
the Euclidean rotation of angle $2\pi \theta$ radians, \ie $\theta$ turns.

Consider a holomorphic map $f$ defined on an open subset of $\C$ and taking values in $\C$.
Assume it has a fixed point $u$ of neutral multiplier.
A \emph{linearization domain} is an open subset $W$ of the domain of $f$, containing $u$ and such that there exists a holomorphic map $\psi$ from $W$ to $\D$ or $\C$ that conjugates $f$ to $R_\theta$ (so $f(W)=W$).
We will denote
\[\rad W = \rad(W,u).\]
If $\theta$ is irrational, then there is a linearization domain that contains all the others, it is called the \emph{Siegel disk} and we denote it $\Delta(f)$.

Fix $d\geq 2$.
For $\theta\in\R$ denote $\cal P$ the set of monic polynomials of degree $d$ and fixing $0$ with multiplier
\[ \rho = e^{2\pi i\theta}
, \]
\ie of the form
\[ P(z)=z^d + a_{d-1} z^{d-1}+ a_{d-3} z^{d-3} + \cdots + a_2 z^2 + \rho z
. \]
This set is parametrized by $a=(a_2,\ldots,a_{d-1})\in\C^{d-2}$ and we sometimes denote $P$ as $P_a$.
Note that if $d=2$, $\cal P$ consists in a single polynomial $P(z) = z^2+\rho z$.

Given $P\in \cal P$, if its fixed point $0$ is linearizable, then we denote $\Delta(P)$, or just $\Delta$, its Siegel disk.
In this case we denote
\[\rad(P)=\rad \Delta(P) = \rad (\Delta(P),0).\]
If $0$ is not linearizable we let $\rad(P)=0$.

We will use the following particular case of a result from \cite{C:these}, the proposition page~67 in Section~II.1, valid for all Bruno numbers $\theta$:
\begin{theorem}[\cite{C:these}]\label{thm:r:cont}
  The function $P\mapsto \rad(P)$ is continuous in $\cal P$.
\end{theorem}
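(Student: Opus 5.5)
The plan is to prove the two semicontinuities separately. Upper semicontinuity of $P\mapsto\rad(P)$ is a normal-families argument. Lower semicontinuity is a perturbative linearization statement near an irrational rotation. Only the second uses the Bruno hypothesis, and it is where I expect the difficulty.

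\emph{Upper semicontinuity.} Let $P_n\to P$ in $\cal P$ and suppose $\rad(P_n)\geq r>0$ along a subsequence. Since $\Delta(P_n)$ lies in the filled Julia set, and filled Julia sets of monic polynomials are uniformly bounded on compact subsets of $\cal P$, the values $\rad(P_n)$ are uniformly bounded. Let $\phi_n: B(0,\rad(P_n))\to\Delta(P_n)$ be the normalized inverse linearizing maps: $\phi_n(0)=0$, $\phi_n'(0)=1$ and $P_n\circ\phi_n=\phi_n\circ R$. Restricted to $B(0,r)$, the $\phi_n$ form a normalized univalent family, hence a normal family by Koebe distortion. Extract a limit $\phi$. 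It is univalent with $\phi'(0)=1$, and passing to the limit gives $P\circ\phi=\phi\circ R$ on $B(0,r)$. So $\phi(B(0,r))$ is a linearization domain of $P$, hence contained in $\Delta(P)$, and $\rad(P)\geq r$. Applying this along a subsequence realizing $\limsup\rad(P_n)$ gives $\limsup \rad(P_n)\leq\rad(P)$. In particular $\rad$ is continuous at every $P$ with $\rad(P)=0$.

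\emph{Lower semicontinuity.} Fix $P_0$ with $r_0=\rad(P_0)>0$, with normalized linearization $\phi_0: B(0,r_0)\to\Delta(P_0)$, and fix $r<r_0$. Choose $r<s<r_0$. For $P$ close to $P_0$, the map $g_P=\phi_0^{-1}\circ P\circ\phi_0$ is defined and univalent on $B(0,s)$, fixes $0$ with multiplier $\rho$, and satisfies $\sup_{B(0,s)}|g_P-R|\to0$ as $P\to P_0$. It then suffices to prove the following key lemma.

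\emph{Key lemma.} For Bruno $\theta$ and any $s'<s$, there is $\eps>0$ such that every holomorphic $g$ on $B(0,s)$ with $g(0)=0$, $g'(0)=\rho$ and $\sup|g-R|<\eps$ has a Siegel disk containing the image of $B(0,s')$ under a linearizing map with derivative $1$ at $0$.

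Granting the lemma, $\phi_0$ transports this linearization domain of $g_P$ to one for $P$. Its conformal radius at $0$ is at least $s'\cdot(1-o(1))$, by distortion control on $\phi_0$ near $0$ and the normalizations. Letting $s'\to s\to r_0$ then gives $\liminf\rad(P)\geq r_0$.

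\emph{Main obstacle.} The difficulty is the key lemma: we need a linearization domain of $g$ that is almost all of $B(0,s)$, not merely a fixed fraction. Yoccoz's bound $\rad\geq e^{-B(\theta)-C}$ only gives a fixed fraction, so it does not suffice. My first attempt is a KAM-type scheme on the annulus $s'<|z|<s$. There the Bruno condition, via Risler's or Herman's quantitative theorems for holomorphic perturbations of rotations, yields a $g$-invariant analytic curve close to $|z|=s''$ for some $s'<s''<s$, on which $g$ is conjugate to the rotation. The disk bounded by that curve is forward invariant, so by the maximum principle and Siegel's theorem (the family of iterates is normal there) it lies in the Siegel disk of $g$. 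Carathéodory convergence of these disks to $B(0,s'')$ as $\eps\to0$ then gives the required bound on the conformal radius. If the annulus version is hard to obtain with uniform constants, the fallback is to rerun the renormalization / Bruno-series estimate directly on the disk, keeping track of the loss of radius as a function of $\eps$.
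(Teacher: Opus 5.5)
The paper does not prove this statement. It imports it from \cite{C:these}, so the only comparison available is with the standard argument behind that citation, and your decomposition is that standard argument.

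\textbf{What is already complete.} The upper semicontinuity part needs no changes. The reduction of lower semicontinuity to your key lemma via $g_P=\phi_0^{-1}\circ P\circ\phi_0$ is also correct. You do not even need a distortion estimate at the end. If $h$ is the normalized linearization of $g_P$ on $B(0,s')$, then $\phi_0\circ h$ is a normalized conformal map on $B(0,s')$ conjugating $R$ to $P$, so $\rad(P)\geq s'$ exactly.

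\textbf{Where the depth lies.} All of it sits in the key lemma, which your proposal does not prove. It is, however, a known theorem for Bruno $\theta$:
\begin{itemize}
\item Risler's memoir on holomorphic perturbations of rotations states it directly in disk form;
\item it is also the input the paper points to after \Cref{lem:ABC}: Yoccoz's renormalization estimates, Theorem~3 of \cite{ABC-smooth} with $C=0$, in the case of constant rotation number.
\end{itemize}
The clean way to finish is to cite it in that disk form.

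\textbf{Caveat on your annulus route.} Do not try to derive the key lemma from an annulus KAM statement as you sketch, unless you add a hypothesis.
\begin{itemize}
\item A holomorphic perturbation of $R$ on an annulus has no twist. In the coordinate $w=\log z$ the map is $w\mapsto w+2\pi i\theta+p(w)$, and the circle average of $p$ is the same constant on every circle.
\item The real part of that constant produces radial drift. The imaginary part shifts the rotation number away from $\theta$. Neither can be absorbed by choosing a different curve.
\item So an invariant curve near $|z|=s''$ with rotation number exactly $\theta$ requires this averaged term to vanish at every step of the scheme. Annulus data alone cannot give this.
\item In your situation it does hold. The function $\log(g(z)/\rho z)$ is holomorphic on $B(0,s)$ and vanishes at $0$, so it has zero mean on every circle. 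The Newton conjugacies involve only positive Fourier modes, hence extend to the disk and preserve the normalization.
\item In other words, the scheme that actually works is the disk scheme, which is your fallback.
\end{itemize}

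\textbf{Minor point.} The invariant Jordan domain lies in $\Delta(g)$ by the argument of \Cref{lem:rot}: $g$ is a bijective self-map of the domain fixing $0$ with multiplier $\rho$, hence conjugate to a rotation. This is not an application of Siegel's theorem.
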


Though we will not use this fact, we find useful to mention that the function $P\mapsto \log \rad(P)$ is pluri-super-harmonic in $\cal P$ (see \Cref{sec:open}).

The following result is attributed to Shishikura and specific to $\theta$ of bounded type (\cite{Shishikura} unpublished).

\begin{theorem}[Shishikura]\label{thm:shi}
  It $\theta$ has bounded type, then there exists $K>1$ such that $\forall P\in \cal P$, $\Delta(P)$ is a $K$-quasicircle containing at least one critical point in its boundary.
\end{theorem}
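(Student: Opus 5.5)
This is Shishikura's theorem, which the paper attributes to unpublished work. I would derive it from the Douady--Ghys surgery combined with the Herman--Świątek theorem, made uniform over $\cal P$ by a compactness argument.

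\textbf{Step 1: the model.} I would start from a Blaschke product $B$ of degree $2d-1$ that commutes with $z\mapsto 1/\bar z$ and restricts to a real-analytic circle homeomorphism of $\partial\D$ with rotation number $\theta$. We arrange that $B$ has critical points on the unit circle (say $d-1$ of them, counted with multiplicity, lying on $\partial\D$), and that the remaining critical points and poles are placed symmetrically. By Herman--Świątek, since $\theta$ has bounded type, $B|_{\partial\D}$ is conjugate to $R_\theta$ by a quasisymmetric map $h$. Its dilatation is controlled by the bounded-type constant of $\theta$ and by a bound on the geometry of $B$ near the circle.

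\textbf{Step 2: surgery.} I glue the rotation into $\D$ via the Douady--Earle extension of $h$, pull back the standard structure, and straighten. This gives a polynomial-like map, hence a polynomial of degree $d$, with a Siegel disk whose boundary is the image of $\partial\D$ under a $K$-quasiconformal map. The boundary is therefore a $K$-quasicircle. It contains the images of the critical points of $B$ on $\partial\D$, which are genuine critical points of the polynomial.

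\textbf{Step 3: converse and uniformity.} The converse direction is the delicate part. We must show that every $P\in\cal P$ arises this way, or at least that $\partial\Delta(P)$ is a uniform quasicircle through a critical point.

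For the converse I would use the continuity of $\rad$ (Theorem~\ref{thm:r:cont}). The set of $P$ obtained by the surgery is nonempty, and I would try to show it is both open and closed in $\cal P$.

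Closedness comes from compactness. The family of $K$-quasicircles normalized by $\rad$ is compact, $\rad$ is bounded below near a limit point by continuity, and quasicircles pass to limits under Hausdorff convergence. So if $P_n\to P$ with each $\partial\Delta(P_n)$ a $K$-quasicircle through a critical point, then $\Delta(P)$ contains the kernel limit of the $\Delta(P_n)$. By continuity of $\rad$ the two have equal conformal radius, so they coincide. Hence $\partial\Delta(P)$ is a $K$-quasicircle through a limit critical point.

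Openness would come from varying the parameters of $B$, which form a complex family of dimension $d-2$ after normalization, and showing that the surgery map is locally surjective onto $\cal P$ via a holomorphic-motion or properness argument.

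\textbf{Main obstacle.} The hardest step is the uniform bound on $K$ over all of $\cal P$, including where critical points collide or escape. I would first try to prove properness of the parameter map from Blaschke models to $\cal P$. The idea is that a degenerating Blaschke family forces critical values of $P$ to infinity, which cannot happen because $\cal P$ is monic and the Julia set stays bounded. Properness gives that the bounded-geometry constant, and hence $K$ through the Herman--Świątek bounds, stays uniform.
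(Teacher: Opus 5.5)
\textbf{The gap is the converse direction, which is the whole content of the theorem.} Your Steps~1--2 are the standard forward Douady--Ghys surgery and are fine. One detail: a critical point of $B$ on $\partial\D$ must have even multiplicity for $B|_{\partial\D}$ to be a homeomorphism, so $k$ critical points of the polynomial on $\partial\Delta$ correspond to $2k$ critical points of $B$ on the circle. But these steps only show that \emph{some} $P\in\cal P$ have a quasicircle Siegel boundary through a critical point.

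In Step~3 the openness of the set of surgery images is asserted, not proved, and the tools you suggest do not apply. The models do not form a complex $(d-2)$-dimensional family. Prescribing the rotation number to be exactly $\theta$, and requiring a critical point on the circle (i.e.\ $\min_{\partial\D} zB'(z)/B(z)=0$, a real function there), are real conditions; the first is not even analytic in the parameters. The model set is therefore a real set with corners where several critical points sit on the circle, and it has no holomorphic structure for a holomorphic motion to exploit.

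Local surjectivity of the surgery map would require:
\begin{enumerate}
\item its continuity;
\item its injectivity, a rigidity statement needing a pull-back argument and $\on{Leb} J=0$, as in \Cref{lem:rigid};
\item a manifold structure, so that invariance of domain applies;
\item an analysis of how the images of the different strata fit together near polynomials with several critical points on $\partial\Delta$.
\end{enumerate}
None of this is supplied, so Step~3 restates the theorem. Your two halves also concern different sets: the closedness argument shows that a limit $P$ has a $K$-quasicircle boundary through a critical point, not that $P$ is a surgery image.

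\textbf{The uniformity discussion is also off.} The properness heuristic is false as stated. $\cal P$ is not compact, along $a\to\infty$ a critical point does tend to $\infty$ (proof of \Cref{lem:rto0}), and the filled Julia sets are not uniformly bounded. Properness could at best give uniformity over compact subsets of $\cal P$. It is also unnecessary: the Herman--Świątek bound for Blaschke products depends only on their degree and on $\theta$. So every polynomial produced from a degree $2d-1$ model automatically has a $K(d,\theta)$-quasicircle boundary.

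The paper does not reprove the theorem. It relies on Shishikura and on \cite{ZhangGaofei} (extending \cite{Zakeri-entire}), where no parameter-space argument is used: the model is built from the given $P$ itself. Roughly, one runs the surgery backwards along the invariant analytic curves exhausting $\Delta(P)$. This yields Blaschke products of degree bounded independently of the curve, whose circle restrictions have rotation number $\theta$. These are uniformly quasisymmetrically conjugate to $R_\theta$, so the curves are uniform quasicircles, and one passes to the limit.

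The critical point on $\partial\Delta(P)$ then comes from a classical argument. If there were none, conjugating by the exterior Riemann map and reflecting, $P$ would induce an analytic circle diffeomorphism with Diophantine rotation number. That diffeomorphism is analytically linearizable, which would enlarge $\Delta(P)$, a contradiction. Alternatively one can use \cite{GS}.
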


Uniformity of $K$ is proved in \cite{ZhangGaofei}, and generalized to rational maps, by extending a method of \cite{Zakeri-entire}. We do not know if it was in Shishikura's notes.
That there is a critical point on $\partial \Delta$ also follows from \cite{GS}, independently of the fact that $\partial \Delta$ is a quasicircle.
It follows from \Cref{thm:shi} that $\partial\Delta(P)$ varies continuously with $P$, for the Hausdorff topology on compact sets (see \Cref{lem:Delta:cont}).

A degree $d$ polynomial has exactly $d-1$ critical points counted with multiplicity.
Let us partition $\cal P$ into $d-1$ sets
$ I_k $
according to the number $k\in\{1,\ldots,d-1\}$ of critical points (counted with multiplicity), on the boundary of the Siegel disk.
For $\theta$ of bounded type, since the Siegel disk boundary varies continuously with $P$ and since there is always at least one critical point on $\partial \Delta(P)$, it follows that $I_{1}$ is an open subset of $\cal P$ and $I_{d-1}$ is a closed subset.\footnote{More generally, $I_{\leq k} := \bigcup_{n\leq k} I_{n}$ is open and $I_{\geq k} := \bigcup_{n\geq k} I_{n}$ is closed.}
The set $I_{d-1}$ is contained in the connectivity locus, hence bounded,\footnote{This is a well-known fact. Connected Julia sets of monic polynomials have diameter $\leq 4$, hence the critical points of $P\in I_{d-1}$ lie in $\ov B(0,4)$, from which it follows that $P$, which fixes $0$, has all its coefficient explicitly bounded.} so if $\theta$ has bounded type then it is compact.
In the particular case $d=3$, the set $I_{d-1}$ is known as the \emph{Zakeri arc}, as it was proved by Zakeri to be an Jordan arc in \cite{Zakeri-cubic}.

In the present article, we prove the following result (recall $d\geq 2$ is fixed).

\begin{maintheorem}
  If $\theta$ has bounded type then
  \[ \sup_{P\in\cal P} \rad(P) = \max_{P\in I_{d-1}} \rad(P)
  .\]
\end{maintheorem}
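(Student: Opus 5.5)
The strategy is to show that every $P\in\cal P\setminus I_{d-1}$ can be joined, inside $\cal P$, to a polynomial with strictly more critical points on $\partial\Delta$ without decreasing $\rad$. Iterating at most $d-2$ times then lands in $I_{d-1}$. Since $I_{d-1}$ is compact (bounded type) and $\rad$ is continuous by \Cref{thm:r:cont}, the maximum over $I_{d-1}$ is attained, and this gives $\sup_{\cal P}\rad\le\max_{I_{d-1}}\rad$. The reverse inequality is trivial. For $d=2$ there is nothing to prove, so assume $d\ge3$.

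\emph{Step 1: a deformation family inside a stratum.} Fix $P\in I_k$ with $k<d-1$. By \Cref{thm:shi}, $\Delta(P)$ is a quasidisk with at least one critical point on its boundary. I would build quasiconformal deformations of $P$ that are supported on the complement of $\ov{\Delta(P)}$ and move the free critical points, those not on $\partial\Delta$. For instance, take Beltrami differentials invariant under $P$ and supported on grand orbits of small disks around the free critical points, or on their basins if these are attracted. If that is too restrictive, use a surgery that changes $P$ only near the free critical values. After straightening with the measurable Riemann mapping theorem and normalizing to monic form with fixed point $0$, we get a holomorphic family $t\mapsto P_t\in\cal P$ over a complex parameter $t$ in some Riemann surface $T$. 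The straightening maps $\phi_t$ are conformal on $\Delta(P)$ and conjugate $P$ to $P_t$ there. Hence $\phi_t(\Delta(P))\subset\Delta(P_t)$, and since the conjugacy is global, $\phi_t(\Delta(P))=\Delta(P_t)$. The critical points on $\partial\Delta$ remain on it, so $P_t\in I_{\ge k}$. Also $\rad(P_t)=|\phi_t'(0)|\,\rad(P)$ with $t\mapsto\phi_t'(0)$ holomorphic and nonvanishing, so $\log\rad(P_t)$ is \emph{harmonic} in $t$. I need the family to be nontrivial, i.e.\ $t\mapsto P_t$ nonconstant. This should follow from counting parameters: $\dim\cal P=d-2$, and the constraints of having $k$ specified critical points on $\partial\Delta$ have codimension at most $k-1$ (one critical point on the boundary is automatic).

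\emph{Step 2: pushing to the boundary of the stratum.} Take the maximal such family (the connected component of the stratum through $P$, parametrized by its universal cover), and a path along which the harmonic function $\log\rad(P_t)$ is nondecreasing, for example a gradient line. Along this path $P_t$ must leave every compact subset of the open stratum. There are two possibilities. Either the coefficients tend to infinity, or $P_t$ accumulates on $I_{\ge k+1}$. The first must be ruled out: I would show $\rad(P)\to0$ as $|a|\to\infty$. Indeed, some critical point lies on $\partial\Delta$ and hence in the filled Julia set, and large coefficients force the nonescaping set near $0$ to be small. This contradicts monotonicity of $\rad$ along the path. In the second case, continuity of $\rad$ (\Cref{thm:r:cont}) gives a limit $P'\in I_{\ge k+1}$ with $\rad(P')\ge\rad(P)$.

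\emph{Main obstacle.} Step 1 is the delicate part: proving that the $k$-stratum containing $P$ really is a complex analytic set of positive dimension, parametrized by conformal-on-$\Delta$ deformations. Free critical points in the Julia set, or captured by $\Delta$'s grand orbit, are not easily moved by invariant Beltrami differentials. My first attempt would be a surgery: cut out $\ov\Delta$, replace it by a rotation model glued via the quasicircle structure, and vary the complementary polynomial-like data. Alternatively, I would use transversality arguments of Epstein/Levin type to show that the relations ``$c_j\in\partial\Delta$'' cut $\cal P$ in codimension at most $k-1$. Ensuring that the deformations extend along a whole gradient path (properness) is the second difficulty, handled by the estimate $\rad\to0$ at infinity together with the Hausdorff continuity of $\partial\Delta$.
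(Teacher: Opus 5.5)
Your overall frame is sound: climb from $I_k$ to $I_{\ge k+1}$ without decreasing $\rad$, use harmonicity of $\log\rad$ and the maximum principle, and conclude by compactness of $I_{d-1}$. But Step~1 is a genuine gap, and as formulated it would fail, taking Step~2 down with it.

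\emph{Quasiconformal deformations are the wrong object.} Deformations supported off $\ov\Delta$ exist only when the free critical points lie in Fatou components that carry deformations, such as non-superattracting attracting basins. If a free critical point lies in $J(P)$ (the typical situation on the bifurcation locus), or is superattracting, there is no nontrivial such family. Even when such a family exists, the maximal one is a hyperbolic-type component. Its boundary consists of parameters where the cycle stops being attracting, and these are typically still in $I_k$. So the dichotomy you state in Step~2 is false for your family: the maximum principle gives a boundary point with larger $\rad$, but not one in $I_{\ge k+1}$, and your deformation stops there. The set on which $\log\rad$ is harmonic is much larger. It is the locus where $\partial\Delta$ alone moves holomorphically (\Cref{lem:cbord,thm:harmo:1}). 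In a one-parameter family, this is exactly where the set of critical points on $\partial\Delta$ is locally constant (\Cref{lem:cbord,lem:holomo:crit}).

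\emph{For $k\ge 2$ the stratum is not complex analytic.} For $d=3$, $I_2$ is Zakeri's real Jordan arc. So no parameter count can produce the complex family you want. The natural complex objects are the loci where the relative internal angles of the boundary critical points are fixed. These are cut out by algebraic equations only when the angles are of the form $m\cdot 2\pi\theta$, i.e.\ by relations $P^m(c_n)=c_1$.

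The paper supplies exactly the device you are missing. It accepts an $\eta$-loss and works only on one-dimensional algebraic curves cut out by critical relations. First (\Cref{prop:step:1}), on curves where $q-1$ critical points are kept periodic, it pushes $\log\rad$ to the boundary of the holomorphic-motion locus. There, any critical point on $\partial\Delta$ is active (\Cref{lem:active}, via Dujardin--Favre), so a nearby perturbation makes it periodic. Iterating ends with all but one critical point superattracting.

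Then (\Cref{prop:step:2}) it releases one periodic critical point $c_k$ at a time, on the curve defined by the remaining relations. The maximum principle on the component where $c_k\notin\partial\Delta$ yields a boundary parameter with $c_k\in\partial\Delta$. Rigidity (\Cref{lem:rigid}) is a Thurston pullback argument that needs $\on{Leb}J=0$ and the other critical points periodic. Together with relative simple connectivity (\Cref{lem:rel:sc}), it shows that the relative angle of $c_k$ varies along a nondegenerate boundary continuum. Hence a nearby parameter satisfies a relation $P^m(c_k)=c_1$, which defines the next curve.

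None of these ingredients appears in your proposal: restricting to algebraic curves of critical relations, using activity to create periodic critical points, and using rigidity to create the new relation $P^m(c_k)=c_1$. Without them, Step~1 has no proof.
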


It is tempting to sum this up as follows: \emph{biggest Siegel disks are those for which all critical points are on the boundary} but that would be misleading for a few reasons: though this is likely to be true, we do not claim that a maximizer is necessarily in $I_{d-1}$, only that there is a maximizer in $I_{d-1}$; for this to make sense one has to fix a normalization, otherwise a simple rescaling can make a Siegel disk as big as we want; here we chose to restrict $P$ to $\cal P$, which contains a representative up to linear conjugacy of all polynomials with a fixed point of rotation number $\theta$; the size of the Siegel disk is measured in term of the conformal radius w.r.t.\ the fixed point.

Before detailing the proof, we present a possible application.

\subsection{Application}

We append $[\theta]$ to the objects of the previous section to emphasize their dependence on $\theta$: for instance $\cal P = \cal P[\theta]$.

Bruno's sum was defined in \cite{Bruno} for irrational $\theta$ as $B(\theta)=\sum_{n\geq 0} \frac{\log q_n}{q_{n+1}} \in (0,+\infty]$ where $p_n/q_n$ are the approximants of $\theta$.
A Bruno number is an irrational $\theta$ such that $B(\theta)<+\infty$.
For rational $\theta$, it is useful to let $B(\theta)=+\infty$.
See also \cite{Yoccoz}.

Let us recall a conjecture of Douady (see \cite{Douady-Bourbaki}, Remarque~2 page~162), whose special case $d=2$ was proved to hold by Yoccoz in \cite{Yoccoz}:
\begin{conjecture}[Optimality of the Bruno condition]
  If $\theta\in\R$ is not a Bruno number then no polynomial of degree $\geq 2$ can have a Siegel disk of rotation number $\theta$.
\end{conjecture}
Let us also recall Buff's conjecture (\cite{BC1}, page~328, Conjecture~1.23), where $\crit P$ is the set of critical points of $P$ and $\dist$ is the Euclidean distance in the complex plane:
\begin{conjecture}[\cite{BC1}]\label{conj:buff}
  For every degree $d\geq 2$ there exists $C_d>0$ such that for every Bruno number $\theta$ and for every polynomial $P\in\cal P[\theta]$, 
  \[\rad(P) \leq C_d \, e^{\frac{-B(\theta)}{d-1}} \dist(0,\crit P)
  .\]
\end{conjecture}

\noindent We state here a weaker conjecture, motivated by the proof of \Cref{cor:main} below:
\begin{conjecture}[Reduced conjecture]\label{conj:red}
  For all non-Bruno irrational number $\theta$, denote $[u_0;u_1,u_2,\ldots]$ its continued fraction expansion and let
  $\theta_n = [u_0; u_1, \ldots, u_n, 2,2,2,\ldots]$.
  Denote $M_n = \max \rad(P)$ over $P\in I_{d-1}[\theta_n]$.
  Then
  \[M_n \underset{n\to \infty}{\tends} 0
  .\]
\end{conjecture}

In other words, if all critical points are on the boundary of the Siegel disk of $P\in \cal P[\theta_n]$ then its conformal radius is small.\footnote{Actually, our main theorem proves that the polynomials for which at least one critical point is not on the boundary cannot have a bigger Siegel disks. Hence \Cref{conj:red} implies that actually the supremum of $\rad P$ over $P\in \cal P[\theta_n]$ tends to $0$ as $n\to\infty$.}

Buff's conjecture implies the reduced conjecture: indeed it is easy\footnote{All terms of the Bruno sum are non-negative, $q_n$ is locally constant at irrationals, and at rationals, there is one $n$ such that $q_{n+1}$ tends to infinity while the $q_k$ for $k\leq n$ are locally constant.} to see from the definition of $B$ that $B(\theta_n)\to +\infty$ for \emph{any} sequence $\theta_n$ that tends to a non-Bruno real $\theta$.

\begin{corollary}\label{cor:main}
  Douady's conjecture follows from the reduced conjecture.
\end{corollary}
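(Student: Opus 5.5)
We argue by contradiction. Suppose $\theta$ is not a Bruno number and some polynomial $Q$ of degree $d\geq 2$ has a Siegel disk of rotation number $\theta$. Up to affine conjugacy we may assume $Q$ is monic, fixes $0$ with multiplier $e^{2\pi i\theta}$, and has its Siegel disk $\Delta(Q)$ centered at $0$. Then $Q\in\cal P[\theta]$ and $r:=\rad(Q)>0$. The plan is to transfer this positive radius to the bounded type approximants $\theta_n$ of \Cref{conj:red}, and then use the Main Theorem to produce polynomials in $I_{d-1}[\theta_n]$ whose radius stays bounded below, contradicting $M_n\to 0$.

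\textbf{Step 1: perturbation.} Since $\Delta(Q)$ has conformal radius $r$, the restriction of $Q$ to a slightly smaller invariant disk, say the image of $B(0,r/2)$ under the linearizing map, is conjugate to the rotation. Fix a polynomial $Q_\lambda$ obtained by varying only the multiplier: $Q_\lambda(z)=Q(z)+(\lambda-\rho)z$. We want, for $\lambda_n=e^{2\pi i\theta_n}$ close to $\rho$ (note $\theta_n\to\theta$), that $\rad(Q_{\lambda_n})$ stays bounded below.

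The natural tool is a lower semicontinuity statement for Siegel disks under perturbation of the rotation number. This is not directly \Cref{thm:r:cont}, which fixes $\theta$, so it is the main obstacle. What I would try first is to use that $\theta_n$ shares the first $n$ partial quotients with $\theta$ and is then completed by $2$'s. I would then apply the Yoccoz/Risler-type lower bound $\log\rad \geq -B(\theta_n)-C$ for polynomials in $\cal P[\theta_n]$. However, $B(\theta_n)\to\infty$, so this bound is useless, and the perturbative idea genuinely fails.

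\textbf{Alternative route, which I would actually pursue.} Instead of perturbing $Q$, I would argue by the standard Douady/Yoccoz continuity of Siegel disks in $\theta$: if a sequence $P_n\in\cal P[\theta_n]$ converges to $P\in\cal P[\theta]$ and $\rad(P_n)\geq c>0$, then the linearizations (univalent on $B(0,c)$, normalized) form a normal family. A limit conjugates $P$ to $R_\theta$ on $B(0,c)$, so $\rad(P)\geq c$. This is the easy direction, and it suggests the argument should run the other way. Write $K_n$ for the compact set $I_{d-1}[\theta_n]$. These sets are uniformly bounded by the connectivity locus bound from the footnote, since all critical points lie on $\partial\Delta$ and the Julia set is connected. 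Then:

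\begin{enumerate}[(a)]
\item By the Main Theorem, for every $P\in\cal P[\theta_n]$ we have $\rad(P)\leq M_n$.
\item Take $P=Q_{\lambda_n}$. We need $\rad(Q_{\lambda_n})\not\to 0$, i.e.\ a lower semicontinuity of $\rad$ at $(Q,\theta)$ along the sequence $\theta_n$.
\end{enumerate}

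So the crux remains (b). For it I would use that $\theta_n$ and $\theta$ share long continued fraction beginnings, together with the structure of $Q$'s Siegel disk. The concrete approach is a pullback or quasiconformal surgery: take a quasi-conformal modification on an annulus inside $\Delta(Q)$ that changes the rotation number from $\theta$ to $\theta_n$, using a Herman-type interpolation between rotations on a circle. The dilatation is supported in an annulus of fixed modulus and tends to $0$ because $\theta_n\to\theta$. Straightening yields a monic polynomial in $\cal P[\theta_n]$ (after normalization) with a linearization domain containing the image of a fixed-size disk. Its conformal radius is then bounded below by roughly $r/2$ times a factor tending to $1$. This contradicts $M_n\to 0$, proving the corollary. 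Making this surgery rigorous, and ensuring the normalization keeps the result monic with the fixed point at $0$, is the step I expect to require the most care.
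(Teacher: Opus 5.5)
There is a genuine gap at your step (b), and the surgery you propose to fill it cannot work.

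Your overall architecture matches the paper's. You argue by contradiction and perturb only the multiplier; your $Q_{\lambda_n}$ is exactly the paper's $P_a[\theta_n]$. You then use the Main Theorem to get $\rad(Q_{\lambda_n})\le\sup_{\mathcal P[\theta_n]}\rad=M_n$. But everything the corollary needs beyond the Main Theorem is step (b), $\liminf_n\rad(Q_{\lambda_n})>0$, and you do not establish it.

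The paper gets (b) from \Cref{lem:ABC}, i.e. Lemma~17 of \cite{ABC}. That lemma gives $\liminf\rad(\Delta_n)\ge\rad(\Delta)$ under two conditions: the approximants are exactly $\theta_n=[u_0;u_1,\ldots,u_n,2,2,\ldots]$, and the perturbation satisfies $|f_n-f|\le K|\theta_n-\theta|$ on a disk containing $\Delta$. Your $Q_{\lambda_n}$ meets the second condition, since $|Q_{\lambda_n}-Q|\le 2\pi|\theta_n-\theta|\,|z|$. The lemma is hard: it rests on Yoccoz's sector renormalization with delicate estimates that remain valid for non-Bruno $\theta$. Both the arithmetic shape of $\theta_n$ and the size of the perturbation matter. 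For a general sequence $\theta'_n\to\theta$, for instance a rational one, lower semicontinuity of $\rad$ is simply false.

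Here is why your quasiconformal surgery is impossible, not just delicate. Work in the linearizing coordinate of $\Delta(Q)$. For the modified map to remain a degree-$d$ branched cover, it must send the closed disk $\{|w|\le r'\}$ homeomorphically onto itself. Since it equals $R_{\theta_n}$ on $\{|w|\le r''\}$, it restricts to a homeomorphism of the annulus $A=\{r''<|w|<r'\}$ whose boundary circles carry rotation numbers $\theta_n$ and $\theta$.

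Every orbit in $A$ therefore stays in $A$ forever. The smallness of the one-step dilatation is irrelevant: the iterates are shears whose dilatation grows linearly, so no invariant Beltrami coefficient of bounded dilatation exists. In fact no straightening can exist at all. If $\psi$ conjugated the modified map to a polynomial $G$, then $G$ would be a conformal automorphism of the annulus $\psi(A)$ preserving both boundary components. It would then be conjugate to a rigid rotation, which has the same rotation number on both boundary curves. This contradicts $\theta_n\neq\theta$.

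There is a quicker way to see the approach is doomed. Your surgery uses nothing about $\theta_n$ except $\theta_n\to\theta$. It would therefore equally produce polynomials with a linearizable fixed point of rational multiplier $e^{2\pi i p/q}$, with $p/q$ close to $\theta$, which is impossible.

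The missing ingredient is precisely \Cref{lem:ABC}. With it, your steps (a) and (b) give $M_n\ge r_0>0$ for all large $n$, where $r_0<\rad(Q)$. This contradicts \Cref{conj:red}, exactly as in the paper's proof.
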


To prove this corollary we use Lemma~17 from page~145 of \cite{ABC}:
\begin{lemma}[\cite{ABC}]\label{lem:ABC}
  Let $f$ be a holomorphic function that has an indifferent fixed point at $0$, of irrational rotation number $\theta$ (which may or may not be a Bruno number).
  Assume that this fixed point is linearizable and denote $\Delta$ its Siegel disk.
  Denote $[u_0;u_1,u_2,\ldots]$ the continued fraction expansion of $\theta$ and let
  $\theta_n = [u_0; u_1, \ldots, u_n, 2,2,2,\ldots]$, which is a bounded type number.
  Assume that $f_n$ is a sequence of holomorphic maps defined on a disk\footnote{Actually we may take any fixed open set containing $\Delta$.} $B(0,R)$ containing $\Delta$, with $f_n(0)=0$, $f_n'(0)=e^{2\pi i\theta_n}$, and 
  $\lvert f_n-f\rvert \leq K\lvert\theta_n-\theta\rvert$ on $B(0,R)$. 
  Denote $\Delta_n$ the maximal linearization domain of $f_n$.
  Then
  \[ \liminf \rad(\Delta_n) \geq \rad(\Delta)
  .\]
\end{lemma}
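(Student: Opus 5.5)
The plan is to reduce the statement to a quantitative linearization result for small perturbations of a rotation, after straightening $f$ on most of its Siegel disk. Fix $r<\rad(\Delta)$ and pick $r<r'<\rad(\Delta)$. Let $\phi:B(0,\rad\Delta)\to\Delta$ be the linearizing map normalized by $\phi(0)=0$, $\phi'(0)=1$, so that $\phi^{-1}\circ f\circ\phi=R_\theta$. Since $\phi(\ov B(0,r'))$ is a compact subset of $\Delta\subset B(0,R)$, the map
\[ g_n=\phi^{-1}\circ f_n\circ\phi \]
is well defined on $B(0,r')$ for $n$ large. Writing $\phi^{-1}$ on a neighbourhood of $\phi(\ov B(0,r'))$ and using the Koebe/Cauchy bound on its derivative there, we get
\[ g_n=R_\theta+O(\lvert f_n-f\rvert)=R_{\theta_n}+h_n, \qquad \lVert h_n\rVert_{B(0,r')}\le K'\lvert\theta_n-\theta\rvert, \]
because $\lvert R_\theta-R_{\theta_n}\rvert\le 2\pi r'\lvert\theta-\theta_n\rvert$. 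Moreover $g_n(0)=0$ and $g_n'(0)=e^{2\pi i\theta_n}$, so $h_n=O(z^2)$. If we show that $g_n$ is linearizable on a domain of conformal radius at least $r$ for $n$ large, then conjugating back by $\phi$ gives $\rad(\Delta_n)\ge r$, since $\phi'(0)=1$ and the Schwarz/Koebe comparison passes through $\phi$ up to a factor tending to $1$. As $r$ is arbitrary, this yields $\liminf\rad(\Delta_n)\ge\rad(\Delta)$.

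The core step is therefore a KAM-type statement: a map $R_{\alpha}+h$ on $B(0,1)$ with $h=O(z^2)$ and $\lVert h\rVert\le\delta$ is linearizable on $B(0,1-\eps)$ once $\delta$ is small with respect to an arithmetic quantity of $\alpha$ truncated at the scale $\delta$. I would run a Newton/Rüssmann iteration. At step $j$ one solves the cohomological equation
\[ \chi(\rho_n z)-\rho_n\chi(z)=h(z) \]
for the Taylor modes $k\le N_j$ only. The divisors $\lvert\rho_n^{k-1}-1\rvert$ with $k<q_{n+1}(\theta_n)$ coincide, up to bounded factors, with those of $\theta$. For larger $k$ they are those of a number whose tail is $[2;2,2,\ldots]$, hence controlled by bounded type constants. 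Since $\lvert\theta_n-\theta\rvert\asymp 1/(q_{n}q_{n+1})$, with $q_{n+1}\asymp u_{n+1}q_n$ or of order $q_n$ for the modified tail, the initial size $\delta\asymp 1/q_{n+1}^2$ is tiny compared to the worst divisor $\asymp 1/q_{n+1}$ encountered. Each iteration then squares the error while the loss of radius is a summable sequence $\eps_j$ with $\sum\eps_j\to0$ as $n\to\infty$.

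The main obstacle is the first Newton steps at modes $k$ up to about $q_{n+1}$, where the divisors of $\theta_n$ inherit those of the (possibly non-Bruno) $\theta$. The crude Siegel estimate would cost a factor like $e^{\sum_{m\le n}\log q_{m+1}/q_m}$, which can be huge, and the smallness $\delta\asymp\lvert\theta_n-\theta\rvert$ alone does not beat it. The way I would try to overcome this is to use the structure of the perturbation rather than just its size. The exact rotation $R_\theta$ is already linear, and $h_n$ is produced by a perturbation of size $O(\lvert\theta-\theta_n\rvert)$. So its effect over $q_{n+1}$ iterates is controlled by the near-return of $R_{\theta_n}^{q_{n+1}}$, and we can estimate $g_n^{q}$ directly for $q\le q_{n+1}$ on $B(0,r')$ without dividing by small divisors. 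Concretely, I would first compare $g_n^{q_{n+1}}$ with the identity up to an error $O(q_{n+1}\delta)=O(1/q_{n+1})$ on a slightly smaller disk, and pass to a renormalization (near-parabolic or Yoccoz-type) of $g_n$ at scale $q_{n+1}$. The renormalized map is then a perturbation of a rotation by the tail angle, which has bounded type $[2,2,\ldots]$, with a perturbation small enough for a uniform Siegel theorem. If this renormalization bookkeeping works, the radius loss is $o(1)$, and the lemma follows.
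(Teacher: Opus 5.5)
\textbf{There is a genuine gap.} Your reduction is sound. Conjugating by the linearizing map $\phi$ gives $g_n=R_{\theta_n}+h_n$ on $B(0,r')$ with $h_n=O(z^2)$ and $\|h_n\|\le K''\lvert\theta_n-\theta\rvert$. A univalent $H$ with $H(0)=0$, $H'(0)=1$ linearizing $g_n$ on $B(0,r'e^{-\sigma})$ then gives $\rad(\Delta_n)\ge r'e^{-\sigma}$ with no correction factor, since $(\phi\circ H)'(0)=1$.

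The step that carries the whole content of the lemma, linearizing $g_n$ on almost all of $B(0,r')$, is never proved. You drop the Newton scheme because of a supposed small-divisor obstruction. You replace it by a renormalization argument that is only conditional (\emph{if this bookkeeping works}), and as described it does not give what you claim. With $q=q_{n+1}(\theta_n)\asymp q_n$, the deviation of $g_n^q$ from $R_{\theta_n}^q$ is in general of order $q\lvert\theta-\theta_n\rvert\asymp K/q_n$. That is the same order as the rotation angle $\dist(q\theta_n,\Z)\asymp 1/q_n$ of $R_{\theta_n}^q$. After rescaling to renormalized coordinates the perturbation is therefore of size $O(K)$, so the renormalized map is \emph{not} a small perturbation of the rotation by the tail angle. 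You would also still have to carry a renormalized linearization domain back to a domain of almost full radius for $g_n$, which you do not address.

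\textbf{The missing idea} is that the obstruction does not exist, and the Newton scheme you first set up closes.
\begin{itemize}
\item By the best approximation property, for $0<j<q_n$ one has $\dist(j\theta_n,\Z)\ge\dist(q_{n-1}\theta_n,\Z)=1/(q_n+(\sqrt2-1)q_{n-1})\ge 1/(2q_n)$, however large $u_1,\ldots,u_n$ are.
\item If $q_m\le j<q_{m+1}$ with $m\ge n$ (denominators of $\theta_n$), then $\dist(j\theta_n,\Z)\ge 1/(2q_{m+1})\ge 1/(6j)$, because $q_{m+1}=2q_m+q_{m-1}\le 3q_m$.
\item Hence $\lvert e^{2\pi i j\theta_n}-1\rvert\ge\gamma_n/j$ for all $j\ge1$, with $\gamma_n=2/(3q_n)$.
\item $\theta$ and $\theta_n$ lie in the same rank-$n$ cylinder, of length $1/(q_n(q_n+q_{n-1}))$, so $\lvert\theta-\theta_n\rvert\le 1/q_n^2$.
\end{itemize}
Your formula $\lvert\theta_n-\theta\rvert\asymp 1/(q_nq_{n+1})$ fails when $u_{n+1}$ is large, as does your claim that the divisors up to $q_{n+1}(\theta_n)$ match those of $\theta$; neither is needed.

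After rescaling to the unit disk, $\eps_0\le C/q_n^2$ while $\eps_0/\gamma_n\le C/q_n\to0$, with $C$ depending on $K,r',\phi$. Now run the standard Newton step: solve $\chi\circ R_{\theta_n}-e^{2\pi i\theta_n}\chi=h$ and conjugate by $\mathrm{id}+\chi$. This gives $\|\chi\|\le C\eps/(\gamma\sigma^2)$ and a new error $\le C\eps^2/(\gamma\sigma^3)$ on a disk smaller by the factor $e^{-4\sigma}$. With $\sigma_j=\sigma_02^{-j}$ the iteration converges once $\eps_0/(\gamma_n\sigma_0^3)$ is below a universal constant. So $\sigma_0\asymp(C/q_n)^{1/3}$ works, and the total radius loss $8\sigma_0$ tends to $0$.

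The factor $e^{\sum_{m\le n}\log q_{m+1}/q_m}$ you feared is the cost of handling perturbations of size $O(1)$, as in Siegel--Brjuno--Yoccoz estimates. In Newton's method each step divides by the divisors only once, and here the perturbation sits a factor $q_n$ below the worst divisor.

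With this fix your argument is a complete proof, by a route more elementary than the one the paper relies on. The paper only cites \cite{ABC}, where Yoccoz's sector renormalization is used to show that compact subsets of $\Delta$ are eventually contained in $\Delta_n$. The fixed argument uses exactly the bounded tail $2,2,2,\ldots$ and the Lipschitz hypothesis $\lvert f_n-f\rvert\le K\lvert\theta_n-\theta\rvert$.
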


Note: The choice of $2,2,2,\ldots$ is somewhat arbitrary and much more flexibility is possible (see \cite{ABC}).

\Cref{lem:ABC} is proved by showing that every compact subset of $\Delta$ is eventually contained in $\Delta_n$. When $\theta$ is a Bruno number this follows from analysis of Yoccoz's sector renormalization methods and estimates of \cite{Yoccoz}, see  Theorem~3 in \cite{ABC-smooth} (this theorem involves a constant $C$ that we take equal to $0$).
For $\theta$ non-Bruno, this required a variation of these method and more technical estimates.

\begin{proof}[Proof of \Cref{cor:main}]
For $\theta\in\R$, denote 
\[P_a[\theta] = z^d + a_{d-1} z^{d-1} + \cdots + a_2 z^2 + e^{2\pi i \theta} z
.\]
Assume by way of contradiction that $\theta$ is a non-Bruno number and that there is a polynomial $P_{a}[\theta] \in\cal P[\theta]$ with a Siegel disk, \ie $\rad(P_a[\theta])>0$.
Let $\theta_n$ be as in \Cref{lem:ABC} and \Cref{conj:red}.
Since $\theta_n\to\theta$, $P_a[\theta_n]\to P_a[\theta]$.
By \Cref{lem:ABC}
\[ \liminf_{n\to\infty} \rad(P_{a}[\theta_n]) \geq \rad(P_{a}[\theta])
.\]
Choose any $r_0\in(0,\rad(P_{a}[\theta])$.
The equation above implies that
\[M'_n := \sup_{P\in \cal P[\theta_n]} \rad(P) \geq r_0
\]
for every $n$ big enough.
Now by the main theorem, we get that $M_n= M'_n$ where $M_n := \max \rad(P)$ over $P\in  I_{d-1}[\theta_n]$. Of course $M_n\leq M'_n$, but from the non-trivial inequality $M_n\geq M'_n$ we get that $M_n \geq r_0$, which contradicts the reduced conjecture.
\end{proof}

To support the reduced conjecture, let us mention that the approach of Dudko and Lyubich in \cite{DL}, which is targeted at quadratic polynomial Siegel disks, is likely to work in the situation where all critical points are in the boundary of the Siegel disk (work in progress by Dzmitry Dudko, Yusheng Luo and Runze Zhang). The authors of \cite{DL} claim that their study allows for a renormalization scheme with enough control so as to precisely know the size of the Siegel disk. In the multi-critical case, there are models under development of what this renormalization does. On the level of conformal radii, it is expected to imply the reduced conjecture (\Cref{conj:red}) with $M_n$ equal to $C_d e^{\frac{-B(\theta_n)}{d-1}}$ (the upper bound in Buff's conjecture \Cref{conj:buff}; we do not include the term $\dist(0,\on{crit} P)$ because it is bounded away from $0$ and $\infty$ on the compact set $I_{d-1}$).

\subsection{Acknowledgments}

We thank Dzmitry Dudko, Yusheng Luo and Runze Zhang for motivating us in proving the main theorem of the present article.
We also thank Carsten Petersen, Saeed Zakeri and Zhang Gaofei for useful discussions about mathematical and historical aspects of Shishikura's method for creating Siegel disks via Blaschke surgery and their adaptations.
We thank Matthieu Astorg and Thomas Gauthier for pointing out to us \Cref{thm:passive} and Romain Dujardin for discussions concerning pluripotential theory, see the open questions in \Cref{sec:open}.


\section{Proof of the main theorem}\label{sec:pf}

\subsection{Preliminary results}

We will refer to \emph{internal angles} or \emph{relative internal angles}: given a Jordan domain $W\subset \C$ with $0\in W$ consider the unique conformal map $\phi:(\D,0) \to (W,0)$ with $\phi'(0)>0$.
Carathéodory's theorem states that $\phi$ extends continuously to a bijection from $\ov \D$ to $\ov W$.
The internal angle of $w\in \ov W-\{0\}$ is defined as $\arg (\phi^{-1}(w))$ and for $w'\in \ov W-\{0\}$, the relative internal angle from $w$ to $w'$ as $\arg (\phi^{-1}(w')/\phi^{-1}(w))$.

\begin{lemma}\label{lem:rot}
Let $P$ be a polynomial of degree $\geq 2$ and $\Gamma\subset \C$ a Jordan curve.
Assume $P(\Gamma)=\Gamma$ and $P$ is conjugate on $\Gamma$ to a rotation.
Then there is a period one Siegel disk such that $\Gamma$ is contained in it or is equal to its boundary.
\end{lemma}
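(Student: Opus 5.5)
Let $U$ be the bounded component of $\C\setminus\Gamma$ (Jordan curve theorem). The plan is to show that $U$ lies in the filled Julia set, that $P$ maps $U$ onto itself, and that the restriction $P|_U$ is conjugate to an irrational rotation. Hence $U$ sits inside a period-one Siegel disk, and $\Gamma$ lies in its closure.

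\textbf{Step 1: $U$ is invariant.} By the maximum principle, $P(U)$ is bounded with $\partial P(U)\subset P(\partial U)=P(\Gamma)=\Gamma$. Since $P(U)$ is open and connected and its boundary lies in $\Gamma$, it must equal $U$; it cannot be the unbounded component, and it cannot be all of $\C$. So $P(U)= U$. Consequently all iterates $P^n$ are bounded on $U$ by $\max_\Gamma|z|$. Thus $U\subset K(P)$ and the family $\{P^n|_U\}$ is normal, so $U$ lies in a bounded Fatou component $V$ with $P(V)=V$.

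\textbf{Step 2: $P|_U$ is an irrational rotation.} The conjugacy on $\Gamma$ is to a rotation $R_\theta$.
\begin{itemize}
\item If $\theta$ were rational, say $p/q$, then $P^q=\mathrm{id}$ on the infinite set $\Gamma$. This forces $P^q=\mathrm{id}$, which is impossible since $\deg P\ge 2$. So $\theta$ is irrational.
\item $P|_U:U\to U$ is a proper map of degree $m$. Counting preimages near $\Gamma$, where $P|_\Gamma$ is injective, gives $m=1$. Alternatively, conjugate by a Riemann map $\phi:\D\to U$, which extends to a homeomorphism $\ov\D\to\ov U$ by Carathéodory. This yields a finite Blaschke product $B$ whose boundary map is a homeomorphism, so $B$ has degree $1$.
\item Hence $B$ is a Möbius automorphism of $\D$ whose boundary map is conjugate to an irrational rotation. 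It therefore has no boundary fixed points, so it is elliptic with a fixed point $u=\phi(0)$ after normalization. Being conjugate on the circle to rotation by $\theta$, it is the rotation $R_{\pm\theta}$.
\end{itemize}

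\textbf{Step 3: conclusion.} So $u$ is a fixed point of $P$ with $P|_U$ conformally conjugate to a rotation; $U$ is a linearization domain and is contained in the Siegel disk $\Delta$ of $u$. Also $\Delta\subset V$, hence $\Delta$ is bounded. If $\Delta= U$, then $\Gamma=\partial\Delta$.

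Otherwise $\Delta\supsetneq U$. Then $\Gamma$ meets $\Delta$: if $\Gamma\cap\Delta$ were empty, $\Delta$ connected would force $\Delta\subset U$. Since $\Delta$ is foliated by invariant analytic curves on which $P$ acts minimally, and $\Gamma$ is an invariant set on which $P$ acts minimally, any point of $\Gamma\cap\Delta$ has its orbit closure (which is all of $\Gamma$) equal to the invariant circle through that point. Hence $\Gamma\subset\Delta$.

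\textbf{Main obstacle.} The main care needed is the degree-one argument in Step 2, namely passing from the boundary homeomorphism to injectivity of $P$ on $U$. The Carathéodory–Blaschke argument handles this cleanly.
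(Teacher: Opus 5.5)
Your proof is correct and follows essentially the paper's route: the maximum principle gives $P(U)=U$, a degree-one argument follows (you use the Carath\'eodory--Blaschke boundary degree where the paper invokes the argument principle), and conjugating by a Riemann map then yields an elliptic M\"obius map, hence a rotation. Your exclusion of rational rotation numbers, and your Step~3 deriving $\Gamma\subset\Delta$ or $\Gamma=\partial\Delta$ from minimality of $P|_\Gamma$ and of the invariant circles in $\Delta$, fill in details the paper leaves implicit in ``the result follows.''
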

\begin{proof}
Let $D$ be the bounded complementary component of $\Gamma$.
By the maximum principle, $f$ sends $D$ to itself.
By the argument principle, every point in $D$ has exactly one preimage in $D$.
Conjugating $f$ by a conformal mapping from $D$ to $\D$, we get a bijective holomorphic self-map of $\D$, hence a homography, which is conjugate to a rotation on $\partial \D$.
Such a map is conjugate to a rotation on $\D$ by another homography.
It follows that the invariant open set $D$ is contained in a Siegel disk of period one and contains its center.
The result follows.
\end{proof}

We recall that $a=(a_2,\ldots,a_{d-1})\in\C^{d-2}$.
By $a\to\infty$ we mean that at least one coefficient of $a$ tends to infinity.

\begin{lemma}[Limit of $\rad$ at $\infty$]\label{lem:rto0}
Fix $\theta\in\R-\Q$.
\[\lim_{a\to\infty} \rad(P_a[\theta]) = 0
\]
\end{lemma}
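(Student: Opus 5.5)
Since the Siegel disk always satisfies $\frac14\rad(P)\le d(0,\partial\Delta(P))$ and $\Delta(P)$ avoids all critical points and all points escaping to infinity, it suffices to show that $d\bigl(0,\C\setminus K(P_a)\bigr)\to 0$ or $d(0,\crit P_a)\to 0$ as $a\to\infty$. In fact the critical points need not lie outside $\Delta$ for Bruno-free $\theta$ in general; so the cleaner route is a Koebe-type estimate. Let $\phi:B(0,r)\to\Delta$ be the normalized linearizing map with $r=\rad(P_a)$. The key point is that $\phi$ conjugates the rotation to $P_a$ and is injective, so the Taylor coefficients of $P_a$ are controlled by $r$: write $P_a = \phi\circ R\circ\phi^{-1}$ on $\Delta$. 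Since $\Delta\subset K(P_a)$ and $K(P_a)\subset \ov B(0,\beta)$ with $\beta$ the standard escape radius, we will instead use the invariance $P_a(\Delta)=\Delta$ and an area or coefficient bound.

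\textbf{Plan.} Rescale: set $\psi(w)=\phi(rw)/r$, a univalent map of $\D$ normalized by $\psi(0)=0$, $\psi'(0)=1$, and $Q(z)=P_a(rz)/r$, which maps $\psi(\D)$ to itself, conjugated by $\psi$ to $R_\theta$. By Koebe, $\psi(\D)\supset B(0,1/4)$ and $\psi(\D)\subset$ the region where $|Q^n|$ stays bounded; more usefully, the family $\mathcal S$ of schlicht maps is compact, so $Q=\psi\circ R\circ\psi^{-1}$ is, on $B(0,1/4)$, uniformly bounded (by $\sup_{\mathcal S}\sup_{\ov B(0,1/2)\cdot}$ distortion, e.g.\ $|Q|\le 4$ there since $Q(B(0,1/4))\subset\psi(\D)$ and... ) — cleaner: $\psi^{-1}$ maps $B(0,1/4)$ into $\D$, $R$ preserves $\D$, and $\psi(\D)\subset B(0,?)$ is not bounded in general, so instead bound $Q$ on the smaller disk $\psi(B(0,1/2))\supset B(0,1/2\cdot(1/(1+1/2)^2))=B(0,2/9)$: points $z$ with $|\psi^{-1}(z)|\le 1/2$ map under $Q$ to $\psi(\{|w|\le1/2\})\subset \ov B(0,2)$ by the growth theorem $|\psi(w)|\le |w|/(1-|w|)^2$. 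Hence $|Q|\le 2$ on $B(0,2/9)$.

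Then by Cauchy estimates, the coefficients of $Q(z)=r^{d-1}z^d+\sum_{k=2}^{d-1}a_k r^{k-1}z^k+\rho z$ satisfy $|r^{d-1}|\le 2(9/2)^d$ and $|a_k| r^{k-1}\le 2(9/2)^k$. So $r\le C$ and $|a_k|\le C' r^{-(k-1)}$, i.e.\ $r\le C''|a_k|^{-1/(k-1)}$ for each $k$. If $a\to\infty$, some $|a_k|\to\infty$, forcing $r\to 0$. This also covers the case where $0$ is not linearizable ($r=0$ trivially).

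The main step to check carefully is the invariance argument: $Q(\psi(w))=\psi(\rho w)$ for $|w|<1$, so for $|w|\le1/2$ we get $|Q(\psi(w))|\le 2$, and $\psi(\{|w|<1/2\})\supset B(0,2/9)$ by the Koebe distortion lower bound $|\psi(w)|\ge|w|/(1+|w|)^2$. Everything else is routine Cauchy estimates; no use of bounded type is needed, consistent with the statement holding for all irrational $\theta$.
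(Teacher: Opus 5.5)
Your argument is correct, but it is not the paper's route. The paper writes $P_a'=d(z-c_1)\cdots(z-c_{d-1})$. If $a\to\infty$, some $c_j$ must tend to $\infty$. Since the product $c_1\cdots c_{d-1}=\rho/((-1)^{d-1}d)$ is fixed by the multiplier, some other $c_{j'}$ must tend to $0$. No critical point lies in a Siegel disk, so Koebe's one-quarter theorem gives $\rad(P_a)\le 4|c_{j'}|\to 0$.

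This is exactly the route you started with and then discarded, on the grounds that critical points ``need not lie outside $\Delta$''. That remark is false for every irrational $\theta$: $P_a$ is conjugate on $\Delta$ to a rotation, hence injective there, so $P_a'$ has no zero in $\Delta$.

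Your replacement argument checks out. The growth and distortion theorems for the rescaled normalized linearizer $\psi$ give $|Q|\le 2$ on $B(0,2/9)$. Cauchy estimates on the coefficients of $Q(z)=P_a(rz)/r$ then give the explicit bound $\rad(P_a)\le \bigl(2(9/2)^k/|a_k|\bigr)^{1/(k-1)}$ for each $2\le k\le d-1$.

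Compared with the paper, your route is a bit longer but more general. It never uses critical points or the constraint on their product. It in fact bounds the $k$-th Taylor coefficient of any holomorphic germ having at $0$ a Siegel disk of conformal radius $r$ by $2(9/2)^k r^{1-k}$. The paper's argument is shorter and exploits the specific normalization of $\cal P$.

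When writing this up, remove the false starts: the escape-radius remark, the aborted ``$|Q|\le 4$'' line, and the incorrect aside about critical points.
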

\begin{proof}
From $P'_a = dz^{d-1} + \sum_{k=2}^{d-1} k a_k z^{k-1} + \rho = d(z-c_1)\cdots(z-c_{d-1})$, where $c_j$ are the critical points of $P_a$, we get two facts.
First if the $c_j$ stay bounded then the $a_k$ stay bounded. By contraposition, if $a \to \infty$, then at least one $c_j$ tends to infinity.
Second the product $c_1\cdots c_{d-1}$ is fixed. It follows that at least one $c_j$ tends to $0$.
But critical points cannot belong to rotation domains, so by Koebe's one-quarter theorem, $\rad(P_{a})\leq 4\lvert c_j\rvert$.
\end{proof}

\begin{lemma}[Continuity of $P\mapsto \partial \Delta(P)$]\label{lem:Delta:cont}
  Let $\theta$ have bounded type. For $P\in\cal P[\theta]$,
  the conformal mapping $\phi_P: (\D,0)\to (\Delta(P),0)$ sending $0$ to $0$ and satisfying $\arg \phi'(0)=0$ has a continuous extension to a homeomorphism from $\ov \D$ to $\ov\Delta(P)$ and this extension varies continuously with $P$.
  In particular the function that sends $P\in \cal P[\theta]$ to $\partial \Delta(P)$ is continuous with respect to the Hausdorff distance on compact sets.
\end{lemma}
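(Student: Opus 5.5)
The plan is to combine Shishikura's theorem (\Cref{thm:shi}), with its uniform constant $K$, with compactness of normalized quasiconformal maps and the continuity of the conformal radius (\Cref{thm:r:cont}).

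\emph{Extension and normalization.} Since $\partial\Delta(P)$ is a $K$-quasicircle, hence a Jordan curve, Carathéodory's theorem gives the homeomorphic extension of $\phi_P$ to $\ov\D$. To get uniform control, use the fact that a Riemann map onto a $K$-quasidisk extends to a $K'$-quasiconformal map of the sphere, with $K'$ depending only on $K$. We normalize this extension $\Phi_P:\hat\C\to\hat\C$ by $\Phi_P(0)=0$ and $\Phi_P(\infty)=\infty$, and record the scale $\rad(P)=|\phi_P'(0)|$.

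\emph{Continuity by a subsequence argument.} Let $P_n\to P$ in $\cal P$. It suffices to show that every subsequence of $\phi_{P_n}$ has a further subsequence converging uniformly on $\ov\D$ to $\phi_P$. By \Cref{thm:r:cont}, $\rad(P_n)\to\rad(P)>0$. Rescale to $\Psi_n=\Phi_{P_n}/\rad(P_n)$, so that $\Psi_n$ fixes $0$ and $\infty$ and has derivative $1$ at $0$ on $\D$. To normalize a third point, note that $\ov\Delta(P_n)$ is contained in the filled Julia set, which is uniformly bounded for $P_n$ near $P$. The quasisymmetric distortion, together with $\Phi_n(0)=0$, then keeps $\Psi_n(1)$ in a compact subset of $\C^*$. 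Normal families of $K'$-quasiconformal maps fixing $0$ and $\infty$ and sending $1$ into a compact set therefore give a subsequence converging uniformly in the spherical metric to a $K'$-quasiconformal homeomorphism $\Phi$. Its restriction to $\D$ is conformal, and it has derivative $\rad(P)>0$ at $0$ after undoing the scaling.

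\emph{Identifying the limit.} Set $W=\Phi(\D)$, so that $\phi=\Phi|_{\D}$. Passing to the limit in $P_n\circ\phi_{P_n}=\phi_{P_n}\circ R_\theta$ on $\ov\D$ gives $P\circ\Phi=\Phi\circ R_\theta$ on $\ov\D$. Hence $W$ is a linearization domain containing $0$, so $W\subset\Delta(P)$. Since $\phi'(0)$ is real positive and equal to $\rad(P)=\rad\Delta(P)$, Schwarz's lemma applied to $\phi_P^{-1}\circ\phi$ forces $W=\Delta(P)$ and $\phi=\phi_P$. Uniform convergence on $\ov\D$ then gives continuity of the extension, and Hausdorff continuity of $\partial\Delta(P)=\phi_P(\partial\D)$ follows at once.

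The main obstacle is the identification step. It needs $\rad(P)>0$ and the continuity of $\rad$; both hold since bounded type numbers are Bruno numbers, so every $P\in\cal P$ is linearizable. It also needs the uniform quasiconformal bound, so that the limit is a homeomorphism on $\ov\D$ rather than a degenerate map, which is exactly where the uniformity of $K$ in \Cref{thm:shi} is used. Without that uniformity, I would try instead to get equicontinuity of $\phi_{P_n}$ on $\partial\D$ from uniform local connectivity of the quasicircles, which is again a uniform-$K$ statement.
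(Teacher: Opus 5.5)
Your proof is correct in substance, but it is organized differently from the paper's. The paper quotes the Carathéodory (kernel) continuity of $P\mapsto(\Delta(P),0)$, which was established inside the proof of \Cref{thm:r:cont} in \cite{C:these}. It then upgrades locally uniform convergence of the Riemann maps to uniform convergence on $\ov\D$, using that Riemann maps onto $K$-quasidisks ($K$ uniform by \cite{ZhangGaofei}) form a uniformly equicontinuous family.

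You use only the \emph{statement} of \Cref{thm:r:cont}. A subsequential limit $\phi$ of the $\phi_{P_n}$ satisfies $P\circ\phi=\phi\circ R_\theta$, so $\phi(\D)$ is a linearization domain contained in $\Delta(P)$. Then $\phi'(0)=\lim\rad(P_n)=\rad(P)$ and Schwarz's lemma force $\phi=\phi_P$. This shows that Carathéodory continuity is a formal consequence of continuity of $\rad$, the nontrivial input being lower semicontinuity. It makes the lemma independent of the internals of \cite{C:these}. Note also that this identification step needs no quasiconformality: it already gives $\phi_{P_n}\to\phi_P$ locally uniformly on $\D$, by normality of the univalent family. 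Compactness of uniformly quasiconformal extensions then plays the role of the paper's equicontinuity argument for the boundary.

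One detail needs repair. The Riemann map onto a $K$-quasidisk does extend to a $K'$-quasiconformal map of the sphere with $K'$ depending only on $K$ (for instance $K'=K^2$ by reflection). But you cannot additionally impose $\Phi_P(\infty)=\infty$ for free, because post-composing with a Möbius map would change $\phi_P$ on $\D$. Take a round disk $\Omega$ with $0\in\Omega$ at distance $\delta$ from $\partial\Omega$. Any extension of the Riemann map $(\D,0)\to(\Omega,0)$ fixing $\infty$ must move $\infty$ by a hyperbolic distance in $\hat\C-\ov\D$ that tends to $\infty$ as $\delta\to0$, so its dilatation is not controlled by $K$ alone.

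In your setting this is harmless, and there are two easy fixes:
\begin{itemize}
\item Keep the normalization at $\infty$: since $\rad(P_n)\to\rad(P)>0$ and the filled Julia sets are uniformly bounded, $0$ stays uniformly deep in $\Delta(P_n)$, so a uniform $K'$ exists along the sequence.
\item Drop the normalization at $\infty$ and normalize at three points of $\D$. Their images under $\phi_{P_n}$ stay uniformly separated because $\phi_{P_n}\to\phi_P$ locally uniformly on $\D$ by your identification step.
\end{itemize}
Finally, the bound on $\Psi_n(1)$ needs no quasisymmetry. It follows directly from Koebe's theorem, $|\phi_{P_n}(1)|\geq\rad(P_n)/4$, together with the uniform bound on filled Julia sets.
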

\begin{proof}
  Since $\partial \Delta(P)$ is a Jordan curve, the Carathéodory extension theorem implies that $\phi$ extends to a homeomorphism from $\ov \D$ to $\ov \Delta(P)$.

  In the proof of \Cref{thm:r:cont} in \cite{C:these}, we proved in particular the Carathéodory continuity\footnote{The set of pointed simply connected open strict subset of $\C$ is endowed with a topology called the Carathéodory topology, recalled near the end of this proof. This is different from the Carathéodory-Torhorst extension theorem.} of $P\mapsto (\Delta(P),0)$.
  However, this is not enough to imply Hausdorff continuity of $P\mapsto \partial \Delta(P)$.

  The theorem of Shishikura (\Cref{thm:shi}) not only states that the Siegel disk is a quasidisk, but also that the quasiconformality constant $K=K(\theta)$ can be taken uniform on $\cal P[\theta]$, see \cite{ZhangGaofei}.

  Now, for a fixed $K$, for any sequence $U_n$ of $K$-quasidisks and any sequence of points $u_n\in U_n$, Carathéodory convergence of $(U_n,u_n)$, \ie convergence on every compact subset of $\D$ of the conformal mapping $\phi_n :(\D,0) \to (U_n,u_n)$ satisfying $\arg \phi_n'(0)=0$, implies uniform convergence of $\phi_n$ (this is proved by using that the $\phi_n$ forms a uniformly equicontinuous family for the Euclidean metric on $\C$), so of its extension to $\ov \D$.
\end{proof}

It will be useful to work on a ramified covering of the parameter space, on which the critical points can be marked and depend holomorphically on the parameter.
\begin{definition}\label{def:Xi}
  Let $\Lambda = \Lambda_d$ be the smooth complex submanifold of $\C^{d-1}$ defined by \[ (c_1,\ldots, c_{d-1})\in \Lambda \iff c_1\, \cdots\, c_{d-1} = \frac{\rho}{(-1)^{d-1} d}
  . \]
  The projection to the $d-2$ first coordinates is an isomorphism to $(\C^*)^{d-2}$.
  Let
  \[\Xi : \Lambda \mapsto \cal P\]
  sending $\lambda = (c_1,\ldots,c_{d-1})$ to the unique monic polynomial $P$ that vanishes at $0$ and such that
  \[ P' = d(z-c_1)\cdots(z-c_{d-1})
  . \]
\end{definition}
Of course, the map $\Xi$ depends on $\rho=e^{i\theta}$.
The critical points of $P=\Xi(c_1,\ldots,c_{d-1})$ are $c_1,\ldots,c_{d-1}$, with multiplicities accounted for by repetitions.
In the sequel, the notation $c_k$ denotes either a complex number, or the function mapping $\lambda\in \Lambda$ to its $k$-th coordinate, the context making it clear which meaning is intended.

\medskip

We recall the definition of holomorphic motions (see \cite{MSS}).
Consider a Riemann surface $X$ (a connected 1-complex dimensional manifold) and a family $A_x$, $x\in X$ of subsets of $\hat \C$.
We say that \emph{$A_x$ undergoes a holomorphic motion with $x\in X$} if there exists a basepoint $x_*\in X$ and a map $h:X\times A_{x_*}\to \hat\C$ such that:
\begin{itemize}
  \item $\forall x\in X$, the map $h_x: z\in A_{x_*} \mapsto h(x,z)$ is a bijection to $A_x$,
  \item $h_{x_*}$ is the identity,
  \item $\forall z\in A_{x_*}$, $x\mapsto h(x,z)$ is holomorphic.
\end{itemize}
The map $h$ is itself called a \emph{holomorphic motion} with basepoint $x_*$. For any $x'\in X$, the maps $\tilde h(x,z) = h_{x}\circ h_{x'}^{-1}(z)$ provide then a holomorphic motion with basepoint $x'$.
The $\lambda$-lemma of \cite{MSS} states that the maps $h_x$ are quasisymmetric and that $h$ uniquely extends to a holomorphic motion of the sets $\ov A_x$.
Its proof crucially uses the injectivity of the maps $h_x$.

Let us add a not so well-known fact, proved in \cite{Zakeri-fit}, Theorem~6.4 page~1042: if a family of subset of $\hat\C$ undergoes a holomorphic motion and their interiors are empty, then for any basepoint $x_*\in X$ the holomorphic motion $h$ is unique. Though we will not use this fact, we find interesting to mention it.

For an analytic family of polynomials (or rational maps) $f_x$, $x\in X$, we say that $h$ \emph{commutes with the dynamics} if $\forall x\in X$, $f_x(A_x)\subset A_x$ and $h_x$ conjugates $f_{x_*}$ to $f_x$. This is independent of the choice of the basepoint.

Finally, we say that \emph{$A_x$ locally undergoes a holomorphic motion with $x\in X$} if for every $x'\in X$, there exists an open connected neighborhood $V$ of $x'$ such that $A_x$ undergoes a holomorphic motion with $x\in V$.

The following is partially due to Sullivan, see also \cite{Zakeri-fit,BC1}. 
\begin{theorem}[Rotation domains undergoing holomorphic motion]\label{thm:harmo:1}
  Let $\theta$ be an irrational real number.
  Let $X$ be a Riemann surface (not necessarily simply connected).
  Let $(f_x)_{x\in X}$ a family of maps such that $(x,z)\mapsto f_x(z)$ is defined on an open subset of $X\times\C$, takes values in $\C$ and is analytic in $(x,z)$.
  Assume that these maps all have at $0$ a period one point of multiplier $e^{2\pi i\theta}$ and that there are rotation domains $W_x$ of $f_x$ containing $0$ such that $\partial W_x$ is undergoing a holomorphic motion with $x\in X$.
  Then $x \mapsto \log \rad(W_x)$ is harmonic, and the holomorphic motion $h$ commutes with the dynamics.
  Moreover it has a unique extension to a holomorphic motion to the family $\ov W_x$, $x\in X$, such that every $h_{x}$ is a conformal isomorphism between $W_{x_*}$ and $W_x$.
  Moreover, this extension commutes with the dynamics.
\end{theorem}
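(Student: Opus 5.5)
The plan is to go through the four conclusions in order: commutation with the dynamics, harmonicity of $\log\rad$, extension of the motion to $\ov W_x$ by conformal maps, and uniqueness. Everything is local in $x$, so I will work on a disk $V\subset X$ around a basepoint $x_*$, using the motion $h_x$ of $\partial W_x$. By the $\lambda$-lemma, $h$ extends to a holomorphic motion of $\hat\C$ over $V$, possibly after shrinking $V$ (Bers–Royden/Słodkowski would give it globally over a disk). Since $W_x$ is a Jordan-type domain bounded by $h_x(\partial W_{x_*})$ and contains $0$, the complementary component containing $0$ moves with $x$.

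\emph{Step 1: commutation with the dynamics.} For $z\in\partial W_{x_*}$, compare the two holomorphic functions $x\mapsto f_x(h_x(z))$ and $x\mapsto h_x(f_{x_*}(z))$. Both take values in $\partial W_x$. I would use the standard argument: consider $g_x=h_x^{-1}\circ f_x\circ h_x$ on $\partial W_{x_*}$. It is a homeomorphism conjugate (via the linearizer extended to the boundary, or at least through the rotation number on the prime ends) to the rotation $R_\theta$, and it depends continuously on $x$. Its rotation number is constant, equal to $\theta$. Moreover $f_x\circ h_x$ and $h_x\circ f_{x_*}$ are both motions of the same set, since $f_x$ is injective on $\partial W_x$. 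A cleaner route is to transfer everything to the circle by uniformizing: let $\phi_x:(\D,0)\to(W_x,0)$ be the Riemann maps. Then $\phi_x^{-1}\circ f_x\circ\phi_x=R_\theta$ on $\D$, and this extends to the prime-end circle. So the induced motion on prime ends conjugates $R_\theta$ to a family of circle maps that are all equal to $R_\theta$. Irrationality of $\theta$ then forces the circle homeomorphisms commuting with $R_\theta$ to be rotations, and a continuous family of rotations normalized at $x_*$ is what is needed. If $\partial W_x$ is not a Jordan curve this prime-end argument is delicate, and I expect this to be the main obstacle. I would handle it by first proving harmonicity (Step 2) and then defining the extension (Step 3), from which commutation follows by continuity up to the boundary in the sense of the motion.

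\emph{Step 2: harmonicity.} Take the extended motion $H_x$ of $\hat\C$ and the Beltrami coefficient $\mu_x$ of $H_x$, which is holomorphic in $x$. Restricted to $W_{x_*}$, set $\nu_x=\mu_x$ on $W_{x_*}$. Average $\nu_x$ over the linearized rotation dynamics to get a Beltrami form $\tilde\nu_x$ that is invariant under $f_{x_*}$. Averaging under the irrational rotation $R_\theta$ in the linearizing coordinate is the limit of Cesàro means, so it stays holomorphic in $x$ and bounded by the same $k<1$. Define $\tilde\mu_x=\tilde\nu_x$ on $W_{x_*}$ and $\tilde\mu_x=\mu_x$ outside, and solve to get $\tilde H_x$. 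The pull-back structure in $W_{x_*}$ is invariant by the rotation in the linearizing coordinate $\D$ (or $\C$). Hence it is a rotation-invariant ellipse field, which gives the form $a(x)\,\frac{d\bar w}{dw}\cdot\frac{w}{\bar w}$-type (a radial stretch), or zero. Adjusting by a further holomorphic-in-$x$ family, I would instead replace $\tilde\mu_x$ on $W_{x_*}$ by $0$. That still gives a holomorphic family $\mu$ in $x$ with $\|\mu\|<1$, so Ahlfors–Bers yields a new motion $\hat H_x$, holomorphic in $x$, that is conformal on $W_{x_*}$ and agrees with $h$ on $\partial W_{x_*}$. For this agreement I need the values on $\partial W_{x_*}$ to be determined by the complement, which follows from uniqueness of normalized solutions combined with the fact that $\partial W_{x_*}$ has empty interior and the motion of the complement is fixed. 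Then $\hat H_x|_{W_{x_*}}$ is a conformal isomorphism onto $W_x$ fixing $0$, and $\hat H_x'(0)$ is holomorphic and nonvanishing in $x$. Therefore $\log\rad W_x=\log\rad W_{x_*}+\log|\hat H_x'(0)|$ is harmonic.

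\emph{Steps 3–4: extension, uniqueness and dynamics.} The extension is $\hat H_x$ on $\ov W_{x_*}$. It conjugates $f_{x_*}$ to some holomorphic self-map of $W_x$ fixing $0$ with multiplier $e^{2\pi i\theta}$. On $W_x$, this is a conformal automorphism conjugate to a rotation, so it equals $f_x$. By continuity, $\hat H_x$ also conjugates $f_{x_*}$ to $f_x$ on the boundary, which settles commutation. For uniqueness, any two such extensions differ on $W_{x_*}$ by a conformal automorphism of $W_{x_*}$ fixing $0$, i.e. a rotation in the linearizing coordinate. That rotation must be the identity at $x_*$, and holomorphic dependence on $x$ of a family of rotations forces it to be constant. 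Finally, the local statements glue over the whole Riemann surface $X$, since harmonicity is local and uniqueness makes the local extensions agree on overlaps.
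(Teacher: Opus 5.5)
There is a genuine gap, and it sits exactly where the dynamics should enter. In Step~2 you set the Beltrami coefficient to $0$ on $W_{x_*}$, keep $\mu_x$ outside, and assert that the new normalized solution $\hat H_x$ still agrees with $h_x$ on $\partial W_{x_*}$ because ``the motion of the complement is fixed''. It is not fixed. The normalized solution of a Beltrami equation depends on the coefficient globally, so changing it on the positive-measure set $W_{x_*}$ changes $\hat H_x$ on $\partial W_{x_*}$ and on the whole complement: $H_x\circ\hat H_x^{-1}$ is only conformal off $\hat H_x(\ov W_{x_*})$, not the identity. The empty interior of $\partial W_{x_*}$ is irrelevant here.

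Once the averaging is discarded, your Step~2 never uses $f_x$. It would therefore prove that every holomorphic motion of a Jordan curve extends conformally inside with $\log\rad$ harmonic, which is false. Take $h_t(z)=z+t\bar z$, $|t|<1$: this is a holomorphic motion, $U_t=h_t(\D)$ is an ellipse with semi-axes $1\pm|t|$, and $U_{e^{2i\alpha}t}=e^{i\alpha}U_t$. So $t\mapsto\log\rad(U_t,0)$ is radial, vanishes at $t=0$ and tends to $-\infty$ as $|t|\to 1$, hence is not harmonic.

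Your Step~3 (conformal extension, commutation) rests on $\hat H_x(W_{x_*})=W_x$, so it falls with Step~2. Step~1 is circular: the circle maps $\phi_x^{-1}\circ h_x\circ f_{x_*}\circ h_x^{-1}\circ\phi_x$ equal $R_\theta$ only if you already know that $h$ commutes with the dynamics.

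The missing idea is that commutation on the boundary comes first, and cheaply. For $z\in\partial W_{x_*}$, the function $x\mapsto f_x(h_x(z))$ is holomorphic, takes values in $\partial W_x$, and equals $f_{x_*}(z)$ at $x_*$. Extend $h$ to a neighbourhood of $\partial W_{x_*}$ (Bers--Royden or S{\l}odkowski) and use that $\partial W_{x_*}$ has empty interior. The Hurwitz argument of \Cref{lem:holomo:crit} then forces $f_x\circ h_x=h_x\circ f_{x_*}$ on $\partial W_{x_*}$.

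When the $W_x$ are Jordan domains (the only case used later, by \Cref{thm:shi}), your circle idea then works:
\begin{itemize}
\item $\phi_x^{-1}\circ h_x\circ\phi_{x_*}$ commutes with $R_\theta$, hence is a rotation $R_{\alpha(x)}$;
\item the extension $\phi_x\circ R_{\alpha(x)}\circ\phi_{x_*}^{-1}$ is holomorphic in $x$, by the Cauchy integral over $\partial\D$ of its boundary values, which move holomorphically and are locally bounded;
\item its $z$-derivative at $0$ is holomorphic and nonvanishing in $x$, which gives harmonicity of $\log\rad$.
\end{itemize}

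For general rotation domains the paper does not reprove any of this. It applies Zakeri's conformal-fitness theorem \cite{Zakeri-fit} locally, where condition (iv) yields (ii) and (vi). It then deduces commutation from conformality of the extension plus continuity up to the boundary. Uniqueness follows because $\hat h_x^{-1}\circ h_x$ is a conformal self-map of $W_{x'}$ that is the identity on $\partial W_{x'}$.
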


\begin{proof}
Locally,\footnote{In \cite{Zakeri-fit}, $X=\D$.} condition~(iv) of the main theorem of \cite{Zakeri-fit} is satisfied\footnote{Using their notation, take $t=x$, $U_t=W_x$, $c_t=0$, $\rho_{\theta,t}=f_x$.} so by (vi) in that same theorem, $x\in X\mapsto\log \rad(W_x)$ harmonic (being harmonic is a local condition).
For every parameter $x'\in X$, choose an open subset $ V\subset X$ containing $x$ and such that there is a conformal isomorphism sending $V$ to $\D$ and $x'$ to $0$. By (ii), the holomorphic motion $h$ with parameter set $V$ and basepoint $x'$ extends to $\ov W_x$ to a holomorphic motion $h$ for which every $h_{x} : W_{x'}\to W_{x}$ is a conformal map fixing $0$.
Since the conformal mapping conjugates $f_x$ to $R_\theta$, it follows that the extended holomorphic motion commutes with the dynamics.
By continuity of $h_{x}$ on $\ov W_{x'}$ (following for instance from the $\lambda$-lemma), the original holomorphic motion of the family $\partial W_x$, $x\in X$, commutes with the dynamics too, when restricted to parameters $x\in V$.
Since $X$ is connected, the commutation relation can be extended to far away pairs $(x,x')$ by a finite cover argument and composition, so the original holomorphic motion $(\partial W_x)_{x\in U}$ commutes with the dynamics on the whole parameter set $X$.

Uniqueness in the statement follows from the following argument.
Fix a basepoint $x'\in X$ and imagine that the original holomorphic motion has two extensions
$h$ and $\hat h$ to holomorphic motions of the family $\ov W_x$, $x\in X$, that $\forall x\in X$, $h_x(0)=0$ and $\hat h_x(0)=0$ and that each $h_{x}$ and $\hat h_{x}$ are conformal isomorphisms from $W_{x'}$ to $W_x$.
Consider the map $g_x = \hat h_{x}^{-1} \circ h_{x} : \ov W_{x'} \to \ov W_{x'}$.
It is a conformal self-mapping of the simply connected subset $W_{x'}$ of $\C$ with non-empty boundary and that coincides with the identity on this boundary, so it is equal to the identity.
\end{proof}

\begin{definition}\label{def:subo}
  In the next lemmas, we will consider one parameter \emph{sub-families} (subordinate families) of the family $\Xi: \Lambda \to \cal P[\theta]$ of \Cref{def:Xi}, \ie families of polynomials $x\in X\mapsto \Xi(\Phi(x)) \in \cal P[\theta]$ where $X$ is a Riemann surface and $\Phi:X\to\Lambda$ is a holomorphic function.
\end{definition}
For convenience we will omit the index $x$ and denote $P= \Xi(\Phi(x))$, $c_n = c_n(\Phi(x))$, etc.
In the sequel we assume that
\begin{center}
    $\theta$ is a bounded type irrational.
\end{center}

The following argument has a short proof but will be used several times so we formulate it as a lemma.

\begin{lemma}\label{lem:cbord}
  Let $x\in X\mapsto P= \Xi\circ\Phi(x)$ be a sub-family of $\Xi$, as per \Cref{def:subo}.
  Assume that $X$ is connected and that there is some $n\in \lib 1,d-1 \rib$ such that $c_n\in \partial \Delta(P)$ for all $x\in X$.
  Then $\partial \Delta(P)$ undergoes a holomorphic motion with $x\in X$.
\end{lemma}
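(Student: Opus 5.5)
The plan is to build the holomorphic motion explicitly from the dynamics, using the marked critical point $c_n$ as an anchor. By \Cref{thm:shi}, for each $x\in X$ the boundary $\partial\Delta(P)$ is a quasicircle. The restriction of $P$ to it is conjugate to the rotation $R_\theta$ by the boundary extension of the linearizing map. In particular the forward orbit $\{P^k(c_n)\,;\,k\geq 0\}$ is dense in $\partial\Delta(P)$ and consists of pairwise distinct points. The key observation is that this orbit does not depend on any choice: we are told that $c_n$ lies on the boundary for every $x$.

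So I would fix a basepoint $x_*\in X$ and define, on the orbit $A_{x_*}=\{P_{x_*}^k(c_n(x_*))\}$, the map
\[ h(x,P_{x_*}^k(c_n(x_*))) = P_x^k(c_n(x)) . \]
This is well defined and a bijection onto $A_x$, since orbit points are distinct for an irrational rotation. It is holomorphic in $x$, because $\Phi$, $\Xi$ and the coordinate $c_n$ are holomorphic. It is the identity at $x_*$. Hence the dense orbits $A_x$ undergo a holomorphic motion with $x\in X$. The $\lambda$-lemma of \cite{MSS} then extends $h$ uniquely to a holomorphic motion of the closures $\ov{A_x}=\partial\Delta(P_x)$. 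This motion is well defined on all of $X$, since the extension is unique and no local patching is needed.

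The one point needing care is that the extended $h_x$ really maps $\partial\Delta(P_{x_*})$ onto $\partial\Delta(P_x)$, and not merely into $\hat\C$. The closure of the motion of $A_{x_*}$ is a motion of $\ov{A_{x_*}}$, and its image at parameter $x$ is the closure of $h_x(A_{x_*})=A_x$, by continuity and quasisymmetry of $h_x$. That closure is $\partial\Delta(P_x)$. Injectivity is part of the conclusion of the $\lambda$-lemma.

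This is where density of the critical orbit in the quasicircle is essential. It follows from the conjugacy of $P|_{\partial\Delta}$ to an irrational rotation: \Cref{thm:shi} gives a Jordan curve, and Carathéodory's extension of the linearizing coordinate does the rest.
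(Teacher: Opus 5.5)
Your proposal is correct and follows the paper's proof: the orbit of $c_n$ consists of pairwise distinct points because $P$ is conjugate to an irrational rotation on $\partial\Delta(P)$, so $x\mapsto P_x^k(c_n(x))$ is a holomorphic motion of that orbit over $X$, and the $\lambda$-lemma extends it to the orbit's closure, which is $\partial\Delta(P)$ by density. You simply spell out the steps the paper compresses into one line: well-definedness, holomorphy in $x$, and why the extended $h_x$ maps onto $\ov{A_x}$.
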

\begin{proof}
  The points in the orbit of $c_n$ are all disjoint since $P$ is conjugates to an irrational rotation on $\partial \Delta(P)$, so this orbit undergoes a holomorphic motion with $x\in X$, so $\partial \Delta(P)$ too by the $\lambda$-lemma.
\end{proof}

\Cref{lem:cbord,thm:harmo:1} are used in several places in this article but the proof of the statement below is the only place where we use holomorphic motions that are global (in therms of the parameter space $X$).

\begin{lemma}\label{lem:holomo:crit}
  Let $x\in X \mapsto P= \Xi\circ\Phi(x)$ be a sub-family of $\Xi$, as per \Cref{def:subo}.
  Assume that $\partial \Delta(P)$ \emph{locally} undergoes a holomorphic motion with $x\in X$.
  Then the indices $n \in \lib 1,d-1 \rib$ such that $c_n\in \partial \Delta$ are the same throughout $X$ and $\partial \Delta$ undergoes a \emph{global} holomorphic motion with $x\in X$.
\end{lemma}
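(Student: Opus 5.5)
The plan has two parts. First, show that the set $S(x)=\{n\in\lib 1,d-1\rib \,;\, c_n\in\partial\Delta(P)\}$ is locally constant on $X$, hence constant because $X$ is connected. Second, derive the global motion from \Cref{lem:cbord}. The second part is immediate once the first is known. By \Cref{thm:shi}, $S(x)\neq\emptyset$, so we can pick $n\in S$. Then $c_n\in\partial\Delta(P)$ for every $x\in X$, and \Cref{lem:cbord} gives a holomorphic motion of $\partial\Delta(P)$ over the whole of $X$. It is obtained by moving the orbit of $c_n$, whose points stay pairwise distinct, and applying the $\lambda$-lemma. So the content of the lemma is the local constancy of $S$.

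Fix $x'\in X$ and a connected neighbourhood $V$ of $x'$ on which $\partial\Delta$ undergoes a holomorphic motion $h$ with basepoint $x'$. By \Cref{thm:harmo:1}, $h$ extends uniquely to a motion of $\ov\Delta$, each $h_x$ is conformal on $\Delta$, and $h$ commutes with the dynamics. Upper semicontinuity of $S$ comes from \Cref{lem:Delta:cont}. If $n\notin S(x')$, then $c_n(x')$ lies at positive distance from the compact set $\partial\Delta(P_{x'})$, and by Hausdorff continuity this persists for $x$ near $x'$. So $S(x)\subset S(x')$ near $x'$.

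For the reverse inclusion, take $n\in S(x')$ and set $z_0=c_n(x')$. Consider the holomorphic function $F(x)=c_n(x)-h_x(z_0)$ on $V$, which vanishes at $x'$. The goal is to prove $F\equiv 0$ near $x'$. I would use the local structure of $P$ near a critical point lying on the boundary of an invariant quasidisk. Near $z_0$, the set $P_{x'}^{-1}(\ov\Delta)$ consists of $\ov\Delta$ together with at least one other closed region touching $\ov\Delta$ only at $z_0$. This happens because $P$ folds a neighbourhood of $z_0$ with local degree $\geq 2$, and both $\Delta$ and a sector outside $\ov\Delta$ are mapped into $\Delta$.

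I would make this rigid through the motion. The motion of $\ov\Delta$ commutes with $P$, so it can be lifted through $P$ to a motion of $P^{-1}(\ov\Delta)$ away from critical points. Doing this near $z_0$ transports the extra preimage region, which touches $\ov\Delta$ at $h_x(z_0)$. Because $P_x$ is locally injective at non-critical points and $h_x$ conjugates the dynamics, a second preimage branch touching $\Delta$ at $h_x(z_0)$ can only occur at a critical point of $P_x$ on $\partial\Delta$. The critical point near $z_0$ that plays this role must then be $c_n(x)$, up to relabelling among critical points that collide at $x'$; such collisions are handled by counting multiplicities. This gives $F\equiv 0$.

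An alternative I would try first if the lifting is awkward is a purely analytic route. Suppose $F\not\equiv0$; then its zeros are isolated. Along a small circle around $x'$, $c_n(x)$ stays off $\partial\Delta$ but remains close to it. I would then aim for a contradiction with \Cref{lem:Delta:cont}, using the fact that Riemann--Hurwitz counting for $P$ on the component of $P^{-1}(\ov\Delta)$ containing $\Delta$ has degree $1+\#S(x)$, together with an argument principle and continuity argument for this degree.

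The main obstacle is this reverse inclusion. Knowing that $\partial\Delta$ moves holomorphically gives a candidate position $h_x(z_0)$ for the critical point, but the dynamics on $\partial\Delta$ alone is a rotation and does not see criticality. One therefore has to exploit the behaviour of $P$ just outside $\ov\Delta$, via lifted preimages, to force the critical point to follow $h_x(z_0)$.
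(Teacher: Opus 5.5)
Your reduction is sound. Upper semicontinuity of $S$ via \Cref{lem:Delta:cont} shows that $\{x : c_n(x)\in\partial\Delta(P)\}$ is closed. The final global motion does come from \Cref{lem:cbord}, and $F(x)=c_n(x)-h_x(z_0)$ is the right function to study.

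\textbf{The gap.} The core of the lemma is that $F\equiv 0$ near $x'$, i.e.\ that the set above is open. This is not proved, and neither of your two routes supplies a mechanism.
\begin{itemize}
\item The lifting argument is circular. To lift the motion of $\ov\Delta$ through $P_x$ near $z_0$, you must know that the critical value $P_x(c_n(x))$ follows $h_x(P_{x'}(z_0))$. Near a critical point, that is essentially the statement you want.
\item If $c_n(x)\neq h_x(z_0)$, the configuration of $P_x^{-1}(\ov\Delta_x)$ near $z_0$ changes topology. So there is no motion of $P^{-1}(\ov\Delta)$ to transport. Your phrase ``a second branch touching $\Delta$ at $h_x(z_0)$'' presupposes the conclusion.
\item The Riemann--Hurwitz route only restates the claim, since the degree $1+\#S(x)$ is constant exactly when $S$ is.
\item Your assertion that $c_n(x)$ stays off $\partial\Delta$ on a circle where $F\neq 0$ is unjustified. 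There $c_n(x)$ could lie on $\partial\Delta_x$ at some $h_x(z)$ with $z\neq z_0$.
\end{itemize}

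\textbf{The missing idea.} It is simpler, and it uses the inside of $\Delta$, not the outside. By \Cref{thm:harmo:1}, $h$ extends to $\ov\Delta$ with $h_x(\Delta_{x'})=\Delta_x$, and $(x,z)\mapsto h_x(z)$ is jointly continuous by the $\lambda$-lemma.
\begin{itemize}
\item If $F\not\equiv 0$, its zero at $x'$ is isolated. Take a small circle around $x'$ in $V$ on which $|F|\geq m>0$.
\item For $a\in\Delta_{x'}$ close enough to $z_0$, uniform continuity gives $|h_x(a)-h_x(z_0)|<m$ on that circle.
\item By Hurwitz/Rouch\'e, $x\mapsto c_n(x)-h_x(a)$ then vanishes at some $x$ inside the circle. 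That means $c_n(x)=h_x(a)\in\Delta_x$.
\item This is impossible, because $P_x$ is injective on its Siegel disk.
\end{itemize}
Hence $c_n(x)=h_x(z_0)\in\partial\Delta_x$ on $V$. Combined with your closedness and the connectedness of $X$, this gives the constancy of $S$.

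Your remark that one must exploit the behaviour of $P$ just outside $\ov\Delta$ is therefore a misdiagnosis. The only input needed is that a critical point can never lie in $\Delta$. Holomorphic dependence on $x$ then forces a non-vanishing $F$ to push $c_n$ into the interior.
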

\begin{proof}
  Denote $c_n(x)=c_n(\Phi(x))$, $\Delta_x=\Delta(P)$ and $P_x=P$.
  Assume that for some $n$ and $x_0\in U$, $c^* := c_n(x_0)\in \partial \Delta_{x_0}$.
  Let $h$ be a local holomorphic motion, over some parameter set $V \subset X$ containing $x_0$, for the family $\ov \Delta_x$, $x\in V$ (initially defined on $\partial \Delta_x$, extended to $\Delta_x$ by \Cref{thm:harmo:1}).
  Consider the holomorphic function $g(u) = h_{x}(c^*)$.
  Then the functions $g$ and $x\mapsto c_n(x)$ coincide at $x=x_0$.
  Let us prove that they coincide in $V$.
  Assume by way of contradiction that this is not the case.
  Then for every $a\in\Delta_{x_0}$ close enough to $c^*$, the function $g_a: x\in V \mapsto h_{x}(a)$ is close to $g$, so it would follow that there are values of $x\in V$ such that $g_a(x) = c_n(x)$ by Hurwitz's theorem, which is absurd since no critical point can belong to a Siegel disk.
  Since $c_n(x)\in \partial \Delta_x$ for all $x\in X$, 
  $\partial \Delta_x$ undergoes a holomorphic motion over $X$ by \Cref{lem:cbord}. 
\end{proof}

\Cref{lem:rel:sc}  below is the only place where we use that the holomorphic motion given by \Cref{lem:holomo:crit} is global (on the set $W$ below), and is used only in \Cref{lem:new:crit}.

\begin{lemma}[Relative simple connectivity of holomorphic motion locus]\label{lem:rel:sc}
  Let $x\in X\subset\C\mapsto P= \Xi\circ\Phi(x)$ be a sub-family of $\Xi$, as per \Cref{def:subo}.
  Let $X'$ be the subset of $X$ on which $\partial \Delta(P)$ undergoes a local holomorphic motion.
  Then $X'$ is what we call \emph{relatively simply connected in $X$} \ie for any Jordan domain $W\subset X$ such that $\partial W\subset X'$ then $W\subset X'$.\footnote{Equivalently, the inclusion of $X'$ in $X$ induces an injective map on the level of their fundamental groups. We will not use this equivalent formulation.}
\end{lemma}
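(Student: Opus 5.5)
The plan is to fix a critical index that is on the boundary of the Siegel disk along $\partial W$, and to show that it stays on the boundary of the Siegel disk throughout $W$. \Cref{lem:cbord} then gives a holomorphic motion of $\partial\Delta(P)$ over $W$.

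\textbf{Step 1: an index on the boundary along $\partial W$.} Since $X'$ is open (local motions are defined on open sets) and $\partial W\subset X'$, choose a connected open annular neighbourhood $A\subset X'$ of $\partial W$. By \Cref{lem:holomo:crit} applied on $A$, the set of indices $n$ with $c_n\in\partial\Delta(P)$ is constant on $A$. It is nonempty by \Cref{thm:shi}. Fix such an $n$. Moreover $\partial\Delta(P)$ undergoes a global holomorphic motion $h$ over $A$ with some basepoint $x_0\in A$. By \Cref{thm:harmo:1}, $h$ extends to $\ov\Delta_x$, commutes with the dynamics, and each $h_x:\Delta_{x_0}\to\Delta_x$ is conformal with $h_x(0)=0$.

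\textbf{Step 2: the orbit of $c_n$ stays injective on $W$.} The points $z_k(x)=P_x^k(c_n(x))$, $k\ge 0$, are holomorphic functions on all of $X$. I claim they are pairwise distinct for every $x\in\ov W$. Fix $j\neq k$ and let $g(x)=z_j(x)-z_k(x)$. On $A$ we have $g(x)=h_x(a)-h_x(b)$, where $a=z_j(x_0)$ and $b=z_k(x_0)$ are distinct points of $\partial\Delta_{x_0}$. This is nonzero on $A$ by injectivity of $h_x$. By the argument principle it suffices to show that the winding number of $g$ along $\partial W$ is $0$.

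We move $a,b$ continuously to two distinct points of $\Delta_{x_0}$ and then shrink them to $0$ along $a_t=ta$, $b_t=tb$. The function $x\mapsto (h_x(a_t)-h_x(b_t))/t$ stays nonvanishing on $\partial W$, so the winding number is preserved. In the limit the winding equals that of $x\mapsto h_x'(0)(a-b)$, that is, the winding of $h_x'(0)$.

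The same deformation applied to the single point $c^*=c_n(x_0)$ (moving it radially to $0$, with $0$ as the second point) shows that the winding of $h_x'(0)$ equals the winding of $x\mapsto h_x(c^*)=c_n(x)$. This last function never vanishes on $X$, since on $\Lambda$ all coordinates are nonzero (their product is $\rho/((-1)^{d-1}d)\neq 0$). It is holomorphic on $X$, hence on the Jordan domain $W$, so its winding along $\partial W$ is $0$. Hence $g$ has no zeros in $W$. I expect this winding-number bookkeeping to be the main obstacle. In particular, one must check that the deformations are legitimate: the interpolated points stay distinct in $\ov\Delta_{x_0}$, and the normalized difference quotient converges uniformly on $\partial W$.

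\textbf{Step 3: conclusion on $W$.} Since $W$ is a Jordan domain it is simply connected. The orbit $\{z_k(x)\}$ therefore undergoes a holomorphic motion over a neighbourhood of $\ov W$ (over $W\cup A$, basepoint $x_0$), which agrees with $h$ on $A$. By the $\lambda$-lemma it extends to the closures $\Gamma_x:=\ov{\{z_k(x)\}}$. For $x\in A$, $\Gamma_x=\partial\Delta_x$ is a Jordan curve on which $P_x$ is conjugate to $R_\theta$. For $x\in W$, $\Gamma_x$ is the image of the Jordan curve $\Gamma_{x_0}$ under the homeomorphism $h_x$, so it is again a Jordan curve.

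The motion conjugates $P_{x_0}$ to $P_x$ on the orbit points, since $z_k\mapsto z_{k+1}$. By density and continuity it does so on $\Gamma_x$, so $P_x(\Gamma_x)=\Gamma_x$ with $P_x|_{\Gamma_x}$ conjugate to a rotation. By \Cref{lem:rot}, $\Gamma_x$ lies in a period-one Siegel disk or equals its boundary. It contains the critical point $c_n(x)$, so it is not inside a Siegel disk; hence $\Gamma_x=\partial\Delta'$ for a Siegel disk $\Delta'$ of $P_x$.

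This Siegel disk is centred at $0$: the bounded complementary component $D_x$ of $\Gamma_x$ contains $0$ on $A$, and $0\notin\Gamma_x$ for all $x$ (again by injectivity of the motion, $0$ not being in the orbit). So $\Delta'=\Delta(P_x)$ and $c_n(x)\in\partial\Delta(P_x)$ for every $x\in W$. Finally, \Cref{lem:cbord} applied on the connected open set $W$ shows that $\partial\Delta(P)$ undergoes a holomorphic motion over $W$, so $W\subset X'$.
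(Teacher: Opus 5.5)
Your proposal is correct and follows essentially the same route as the paper. Like the paper, you fix a critical point $c_n$ on $\partial\Delta$ along the component of $X'$ containing $\partial W$, and you show that the winding number along $\partial W$ of $h_x(z)-h_x(z')$ is the same for all distinct $z,z'\in\ov\Delta_{x_0}$ and equals that of the non-vanishing holomorphic function $c_n(x)=h_x(c^*)-h_x(0)$, hence $0$; so the orbit of $c_n$ stays injective over $W$, and you conclude with the $\lambda$-lemma, \Cref{lem:rot} and the critical point on the curve. The only difference is that the paper gets independence in $(z,z')$ directly from connectedness of the space of pairs of distinct points of $\ov\Delta_{x_0}$ instead of comparing with $h_x'(0)$; if you keep your version, do the shrinking in a uniformizing coordinate of $\Delta_{x_0}$ (or inside a small round disk about $0$), since $ta$ need not lie in $\Delta_{x_0}$ when the Siegel disk is not star-shaped.
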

\begin{proof}
  Denote $P_x=P$.
  By \Cref{lem:holomo:crit}, the family $\partial \Delta(P_x)$ undergoes a holomorphic motion over every connected component of $X'$.
  Let $\gamma$ be a simple closed loop tracing $\partial W$ in the positive orientation.
  Let $X''$ be the connected component of $X'$ that contains $\partial W$.
  Denote $x^*=\gamma(0)$.
  There is at least one critical point on $\partial \Delta(P_{x^*})$ and without loss of generality we assume that it is $c_1$.
  By \Cref{lem:holomo:crit}, $c_1$ remains in $\partial \Delta(P_x)$ for all $x\in X''$.
  By \Cref{thm:harmo:1} the holomorphic motion of $\partial \Delta(P_x)$ extends to $\ov \Delta(P_x)$ as isomorphisms of $\Delta(P_x)$ for all $x\in U''$, conjugating the restrictions of $P_x$ to $\Delta_x$, in particular fixing $0$.
  When the parameter $x$ varies along $\partial W$ parametrized by $\gamma$, the winding number of $c_1$ relative to $0$ is $0$, otherwise $c_1$ would vanish somewhere in $W$, but it would contradict that $0$ is a the neutral fixed point.
  Note that $c_1(x) = h_{x}(c^*)$ where $c^* = c_1(x^*)$.
  For any two distinct points $z,z'\in \ov \Delta(P_{x^*})$, the winding number of the vector $h_{u}(z)-h_{x}(z')$ as $x$ goes along $\partial W$ is also $0$: indeed by continuity it is independent of $(z,z')$ (since the vector never vanishes), and equals $0$ for $(z,z') = (0,c^*)$ as we just proved.
  Applied to $z=P_{x^*}^n(c^*)$ and $z'=P_{x^*}^{n'}(c^*)$ for any $n\neq n'$ in $\N$, this tells us that the function $x\mapsto P_x^n(c_n(x))-P_x^{n'}(c_n(x))$ has winding number $0$ relative to $0$, as $x$ varies along $\partial W$.
  Hence it cannot vanish in $W$: so the orbit of $c_1$ remains disjoint for all $x\in W\subset X$.
  This orbit thus undergoes a holomorphic motion in $W$, that commutes with the dynamics.
  By the $\lambda$-lemma it extends to the closure of the orbit.
  By continuity the extended motion still commutes with the dynamics.
  For every $x\in W$ the image of $\partial \Delta(P_{x^*})$ is a Jordan curve mapped to itself by $P_x$ and on which $P$ is conjugated to $R_\theta$ on the unit circle.
  By \Cref{lem:rot} it thus bounds a rotation domain of $P_x$.
  Since it also has a critical point on its boundary, this rotation domain is the Siegel disk of $P_x$ at $0$.
\end{proof}

\begin{lemma}[Rigidity]\label{lem:rigid}
  Let $k\in\lib 1,d-1 \rib$. Denote $Z$ the set of 
  $\lambda\in \Lambda$ for which, denoting $P:=\Xi(\lambda)$ and $c_n=c_n(\lambda)$: $c_1,\ldots,c_k$ belong to $\partial \Delta(P)$ while every other critical point is periodic (this last condition is empty if $k=d-1$).
  Then every $\lambda_a\in Z$ has a neighborhood $V\subset Z$ such that: $\forall \lambda \in V$, if the relative internal angles of the critical points $c_1,\ldots, c_k$ are identical for $\lambda$ and $\lambda_a$ then $\lambda =\lambda_a$.
\end{lemma}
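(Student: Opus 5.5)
The plan is a Thurston-type rigidity argument: two nearby polynomials in $Z$ with the same combinatorics are quasiconformally conjugate by a conjugacy that turns out to be conformal, hence affine.

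Fix $\lambda\in V$ with the same relative internal angles as $\lambda_a$, and write $P_a=\Xi(\lambda_a)$ and $P=\Xi(\lambda)$.

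\textbf{Step 1: conjugacy on the closed Siegel disks.} By \Cref{thm:shi} both $\ov\Delta(P_a)$ and $\ov\Delta(P)$ are closed quasidisks on which the maps are conjugate to $R_\theta$. The linearizing maps $\phi_a,\phi$ from \Cref{lem:Delta:cont} give a conjugacy
\[ \psi_0=\phi\circ R\circ\phi_a^{-1}:\ov\Delta(P_a)\to\ov\Delta(P), \]
where $R$ is a rotation chosen so that $c_1(\lambda_a)\mapsto c_1(\lambda)$. Since the relative internal angles agree, $\psi_0$ also sends $c_j(\lambda_a)\mapsto c_j(\lambda)$ for all $j\le k$. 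The map $\psi_0$ is conformal inside, and it extends to a quasiconformal map of the plane because both boundaries are quasicircles.

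\textbf{Step 2: conjugacy on the periodic critical points.} The remaining critical points are periodic, so they are superattracting. For $\lambda$ close to $\lambda_a$ inside $Z$, the periods are the same, by continuity of periodic orbits and because "$c_j$ periodic of period $p$" is a closed analytic condition. The Böttcher coordinates then give conformal conjugacies between the immediate basins on suitable neighbourhoods of these cycles, respecting the marking of the critical points.

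\textbf{Step 3: global quasiconformal conjugacy.} Use the Böttcher coordinate at $\infty$ to match neighbourhoods of $\infty$. Then, following the Douady–Hubbard/McMullen pullback argument, build a quasiconformal map $\psi$ that agrees with these conformal conjugacies on $\ov\Delta$, on the superattracting neighbourhoods and near $\infty$, and that is isotopic to a conjugacy relative to the postcritical set. The postcritical set is $\ov\Delta\cup{}$(the superattracting cycles), and it has the same combinatorics for $\lambda$ and $\lambda_a$. Since every critical value lies in a region where $\psi$ already conjugates, $\psi$ lifts: $P\circ\psi_{n+1}=\psi_n\circ P_a$ with $\psi_{n+1}=\psi_n$ on the already-conjugated regions. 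The dilatation stays bounded, so a subsequence converges to a quasiconformal conjugacy $\psi$ that is conformal on $\Delta$, on all Fatou components (their preimages) and near $\infty$.

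\textbf{Step 4: conclusion.} It remains to show that $\bar\partial\psi=0$ almost everywhere on the Julia set. I would first try the Tan Lei/Shishikura-style argument: every point of $J$ accumulates on the postcritical set, so $J$ carries no invariant line field, giving measurable rigidity. A cleaner alternative is to exploit the neighbourhood $V$: deform the Beltrami coefficient supported on $J$ and count dimensions. The space of $\lambda$ with $k$ prescribed relative angles and $d-1-k$ prescribed periodic relations has dimension $0$, so any invariant Beltrami coefficient on $J$ would produce a positive-dimensional family in $Z$ with the same angles, which contradicts the discreteness of solutions of these analytic equations in a small neighbourhood $V$. Once $\psi$ is conformal on $\C$, it is affine. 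It fixes $0$, and it conjugates monic polynomials with the same normalization, so it is the identity up to a $(d-1)$-th root of unity. That root equals $1$ because $\psi$ has derivative $1$ at $0$ through the linearizers (normalized with $\arg\phi'(0)=0$). Hence $P=P_a$, and by the marking $\lambda=\lambda_a$.

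\textbf{Main obstacle.} I expect Step 4, ruling out invariant line fields on $J$, to be the main obstacle. The smallness of $V$ is also needed in Step 2, to preserve the periods of the periodic critical points, and in Step 3, to keep the isotopy class trivial.
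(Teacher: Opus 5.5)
There is a genuine gap, and it sits in the step you flagged yourself: Step~4 carries the entire content of the rigidity, and neither of your two suggestions closes it.

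\textbf{The two suggested routes for Step~4.} Your first route is not a valid implication. When (almost every) orbit accumulates on the postcritical set, that is exactly the situation in which the usual no-line-field arguments break down. Those arguments need univalent pullbacks at definite scales along orbits staying away from the postcritical set, or some form of porosity. Flexible Lattès maps, whose Lebesgue-typical orbits are dense, do carry invariant line fields.

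Your second route is circular. ``Same relative internal angles'' is a set of real, non-holomorphic conditions on $\lambda$, so there is no discreteness of solutions of analytic equations to invoke. The periodicity equations alone cut out an analytic subset of $\Lambda$ of dimension at least $k-1$. The claim that imposing the angles leaves a $0$-dimensional set is precisely the statement of the lemma; the paper only \emph{expects}, in its note, that $Z$ is $(k-1)$-dimensional. A nonzero invariant Beltrami coefficient on $J(P_a)$ would give a positive-dimensional holomorphic family in $Z$ with the same angles. That is a counterexample to the lemma, not a contradiction with anything established independently. For $k=1$ the angle condition is even vacuous, and the lemma asserts that $\lambda_a$ is isolated in $Z$.

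\textbf{The missing ingredient.} What you are missing is that $\on{Leb}(J(P))=0$ for every $P=\Xi(\lambda)$ with $\lambda\in Z$ (\Cref{lem:zeromes}). The paper proves this through Shishikura's Blaschke model $F$, by showing weak porosity of $\hat J$ at every point $z$:
\begin{itemize}
\item If the orbit of $z$ does not tend to the postcritical set, univalent pullbacks with bounded distortion of a disk disjoint from the Julia set give porosity at $z$.
\item Otherwise the orbit tends to $\partial\D$, since the other critical points are periodic. Using that $F|_{\partial\D}$ is quasisymmetrically conjugate to $R_\theta$ for bounded type $\theta$, one pulls back with bounded distortion a cone outside $\D$, at a critical point on $\partial\D$, that $F$ maps into $\D$. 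This produces balls disjoint from $\hat J$, of size comparable to their distance to $z$, at arbitrarily small scales.
\end{itemize}
Hence $\hat J$ has no density point.

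With this in hand, the pullback argument closes without any line-field discussion. The Beltrami coefficient of $\phi_n$ vanishes on $P_a^{-n}(\Delta(P_a)\cup V_a)$, an increasing sequence of sets exhausting the Fatou set. So the supports have measure tending to $0$, and any limit of the uniformly quasiconformal $\phi_n$ is conformal, hence affine.

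\textbf{A smaller error at the end.} Your final normalization is wrong as stated. In fact $\psi'(0)=e^{i\alpha}\phi'(0)/\phi_a'(0)$, where $e^{i\alpha}$ is the rotation $R$ you inserted to match $c_1$, and this is nontrivial in general. To exclude a nontrivial $(d-1)$-th root of unity, use instead the Böttcher normalization at $\infty$, as the paper does ($\phi_0(z)\sim z$, which is preserved by the lifts). Alternatively, use the proximity of $\lambda$ to $\lambda_a$.
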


\begin{note*}
  We expect $Z$ to be a countable and disjoint union of dimension $k-1$ topological sub-manifolds of $\Lambda$, homeomorphic to tori, that are not real-differentiable sub-manifolds if $k>1$. 
  In the case $d=3$ and $k=2$, Zakeri proved that $Z$ is a Jordan curve.
  We will not need these facts here.
\end{note*}

\begin{proof}
The Lebesgue measure of the Julia set of every $P=\Xi(\lambda)$ with $\lambda\in Z$ is $0$.
We sketch a proof of this in \Cref{app:mes0}.
(Note that, actually, the arguments in \cite{Mu} work in this situation and prove the stronger statement that $J(P)$ is porous, so the Hausdorff dimension of $J(P)$ is less than $2$, but we do not need this fact. It is also possible that the arguments of \cite{Pe} or \cite{Ya} work too for the Lebesgue measure.)

We then implement a Thurston-Sullivan style pullback argument.\footnote{This kind of argument is now classical, and probably dates from Thurston, it is one of the key arguments in his classification of post-critically finite rational maps, see \cite{DH1}. They have been used and generalized by other authors, notably by Sullivan (see for example \cite{Su}). McMullen gave an interesting general formulation in \cite{Mu}.}
Consider $\lambda_b\in Z$ close to $\lambda_a$ and such that the relative internal angles in the statement are all the same and let us prove that $\lambda_b=\lambda_a$.
For this we will first set up a form of Thurston equivalence $(\phi_0,\phi_1)$ between the polynomials (see below), then we will develop the pull-back argument.

Denote $P_a=\Xi(\lambda_a)$ and $P_b = \Xi(\lambda_b)$.
First note that, for every $\lambda_b$ close enough to $\lambda_a$, the period of each critical points $c_k$, the multiplicities of the critical points and the critical relations, are unchanged (on $\partial \Delta$ this is because the relative internal angles of the critical points are the same; the other critical points, are assumed periodic by definition of $Z$ and an attracting basin contains only one cycle).
We claim that for $\lambda_b$ close enough to $\lambda_a$ there exists a homeomorphism $\phi_0: \C\to \C$ such that:
\begin{enumerate}[a)]
\item\label{item:1} $\phi_0$ is quasiconformal,
\item\label{item:8} $\phi_0$ sends each critical point  $c_k(\lambda_a)$ to $c_k(\lambda_b)$ and $\phi_0$ conjugates $P_a$ to $P_b$ on the critical orbits of $P_a$,
\item\label{item:2} there exists a neighborhood $V$ of the union of every periodic critical orbit (including infinity), such that $P_a(V)\subset V$ and
$\phi_0$ is an analytic conjugacy from $P_a$ to $P_b$ in this set,
\item\label{item:3} $\phi_0(z)\sim z$ as $z\to\infty$,
\item\label{item:4} $\phi_0$ is a conformal isomorphism from $\Delta(P_a)$ to $\Delta(P_b)$,
\item\label{item:9} $\phi_0$ is close to the identity if $\lambda_b$ is close to $\lambda_a$.
\end{enumerate}
In \Cref{item:1}, we will not try to get a uniform $K$ as $\lambda_b$ tends to $\lambda_a$, since we do not need this in the argument.
The construction goes as follows.
Consider the orbits of the periodic critical points of $P_a$.
Consider the union $A_a$ of the immediate basins of these orbits.
There is a Böttcher coordinate on $A_a$, i.e.\ a conformal mapping from each component $A'$ of $A_a$ to $\D$, such that: $\exists p\in\N^*$, $\forall s\in A'$, $\Phi_a\circ P_a(s) = (z\mapsto z^{p})\circ \Phi_a(s)$.
This Böttcher coordinate is bijective because there is no critical point in the basin apart from the center.
It is unique up to a finite number of choices.
Let
\[V_a = \setof{z\in A_a}{|\Phi_a(z)|< 1/2}
\]
(this is independent of the choice of $\Phi_a$).
The sets $A_a$ and $V_a$ have the same number of connected components, which is finite.
Let $A_b$ and $V_b$ be defined similarly for $P_b$.
For $\lambda_b$ close enough to $\lambda_a$, $\ov V_a\subset A_b$.

We first define $\phi_0$ on $\ov V_a\cup \ov\Delta(P_a)$ as follows:
on $\ov\Delta(P_a)$, $\phi_0$ is defined as the continuous extension of the unique conformal mapping $\Delta(P_a)\to\Delta(P_b)$ sending $0$ to $0$ and such that $\phi_0(c_1(\lambda_a)) = c_1(\lambda_b)$: since the relative internal angles of $c_1$, \ldots, $c_k$ are assumed identical for $\lambda_a$ and $\lambda_b$, it follows that $\forall 1\leq n\leq k$, $\phi_0(c_n(\lambda_a)) = c_n(\lambda_b)$;
for $z\in \ov V_a$, we take $\phi_0(z)\in \ov V_b$ in the component of $A_b$ that contains $z$ and satisfies $\Phi_b(\phi_0(z))=\Phi_a(z)$.
In particular, $\forall k< n\leq d-1$, $\phi_0(c_n(\lambda_a)) = c_n(\lambda_b)$.
Note that there may be a finite number of choices for the Böttcher coordinate $\Phi_b$ of $P_b$: we choose the one closest to $\Phi_a$.
Note that
\begin{equation}\label{eq:svdgtnhk}
  \forall z\in\ov V_a\cup \ov\Delta(P_a),\ P_b\circ \phi_0(z) = \phi_0\circ P_a(z).
\end{equation}
This partially defined map is injective, and close to the identity when $\lambda_b$ is close to $\lambda_a$.
Injectivity is immediate.
Proximity to identity on $\ov\Delta_a$ follows from \Cref{lem:Delta:cont} and continuity of $\lambda\mapsto c_1(\lambda)$.
On $\ov V_a$, it is because for a fixed period, Böttcher coordinates depend continuously on the map.

For each connected components of $\ov V_a$ and also for $\ov\Delta_a$, choose a smooth Jordan domain containing it, independent of $\lambda_b$, and choose these domains disjoint from each other.
Define $\phi_0$ to be the identity on the complement of the union of these domains.
On each domain, $\phi_0$ is still undefined on an annulus with smooth or quasiconformal boundary.
For each, there is a quasiconformal extension of $\phi_0$ such that $\phi_0$ is a quasiconformal homeomorphism of $\C$.
Moreover this extension can be chosen close to the identity when $\lambda_b$ is close to $\lambda_a$: see \Cref{app:close}.
 
We claim that if $\lambda_b$ is close enough to $\lambda_a$ (in particular, $\phi_0$ is close to the identity), then there exists a homeomorphism $\phi_1: \hat\C\to \hat\C$ such that
\begin{enumerate}[a),resume]
\item\label{item:11} $\phi_1$ coincides with $\phi_0$ on $\ov V_a \cup \ov\Delta(P_a)$,
\item\label{item:6} $\phi_1$ is isotopic to $\phi_0$ relative to\footnote{Denoting the isotopy $t\mapsto\phi_t$, this means $\forall z\in \ov V_a\cup \ov\Delta(P_a)$, $t\mapsto\phi_t(z)$ is constant.} $\ov V_a\cup \ov\Delta(P_a)$,
\item\label{item:7} $\phi_0\circ P_a = P_b \circ \phi_1$,
\end{enumerate}
The existence of a homeomorphism $\phi_1$ close to identity and satisfying \Cref{item:7} is classical, but for convenience
we provide a proof in \Cref{app} as \Cref{lem:phi0phi1}.
To prove \Cref{item:11}, we use \Cref{eq:svdgtnhk} in the following way.
Choose in advance a keypoint in each component of $(\ov V_a \cup \ov \Delta(P_a))-\on{cv}(P_a)$, independently of $\lambda_b$.
On $\ov V_a \cup \ov \Delta(P_a)$, we have
$P_b \circ \phi_1 = \phi_0 \circ P_a  = P_b \circ \phi_0$.
For $\lambda_b$ close enough to $\lambda_a$, (hence both $\phi_0$ and $\phi_1$ close to the identity), $\phi_0$ and $\phi_1$ necessarily coincide on the keypoint, and by a connectedness argument, on the whole component of $(\ov V_a \cup \ov \Delta(P_a))-\on{cv}{P_a}$ that contains it and, by continuity, also on $\on{cv}(P_a)$.
Now, $\phi_0$ and $\phi_1$ are close to the identity and equal on $\ov V_a \cup \ov \Delta(P_a)$, hence \Cref{item:6} holds (by a result of Dyer and Hamstrom: \cite{DyHa}, Theorem~1, applied to the complement of $V_a\cup \Delta_a$).

Note that by \Cref{item:1,item:7}, $\phi_1$ is quasiconformal.
Also, from \Cref{item:2,item:11}, $\phi_0$ and $\phi_1$ map $c_j(\lambda_a)$ to $c_j(\lambda_b)$ for all $j$.
Finally, \Cref{item:11} implies that $\phi_0$ and $\phi_1$ coincide on $\ov \Delta(P_a)$. 
We define by induction a sequence $\phi_n$ of quasiconformal homeomorphisms of $\C$.
We already have $\phi_0$ and $\phi_1$.
Given $n\geq 1$ and a pair $(\phi_{n-1}$, $\phi_{n})$ of quasiconformal homeomorphisms mapping $c_j(\lambda_a)$ to $c_j(\lambda_b)$ for all $j$, isotopic rel.\ $\ov V_a\cup\ov\Delta(P_a)$ and such that 
\begin{equation}\label{eq:1}
\phi_{n-1}\circ P_a = P_b \circ \phi_n,
\end{equation}
then the isotopy can be lifted by the pair $P_b$, $P_a$ to an isotopy from $\phi_n$ to another homeomorphism $\phi_{n+1}$, such that the pair $\phi_n$, $\phi_{n+1}$ satisfies the same hypotheses: the isotopy is possible because $\ov V_a\cup\ov\Delta(P_a)$ contains $\on{cv}(P_a)$ and the lifted isotopy is rel.\ $P_a^{-1}(\ov V_a\cup\ov\Delta(P_a))$, which contains $\ov V_a\cup\ov\Delta(P_a)$.

Note that the condition $\phi_n(z)\sim z$ as $z\to\infty$ is preserved and that all $\phi_n$ are $K$-quasiconformal for the same $K$.
The map $\phi_0$ is holomorphic in $\Delta(P_a)$ and in $V_a$.
Denote $\mu_n$ the Beltrami derivative of $\phi_{n}$.
Then the support of $\mu_{n+1}$ is the preimage by $P_a$ of the support of $\mu_{n}$.
By the classification of Fatou components, every compact subset of the Fatou set of $P_a$ is eventually mapped in the union of $\Delta(P_a)$ and of $V_a$.
Using this and the fact that $\on{Leb}(J(P_a))=0$, it follows that:
\begin{equation}\label{eq:star}
\text{the Lebesgue measure of the support of $\mu_n$ tends to $0$ as $n\to\infty$.}
\end{equation} 
Since $P_a$ and $P_b$ are holomorphic, every $\phi_n$ is $K$-quasiconformal for the same $K$ (which may depend on $P_b$ but that does not matter for the argument).
The set of $K$-quasiconformal homeomorphisms of $\C$, normalized by fixing $0$ and mapping $c_1(P_a)$ to $c_1(P_b)$, is compact.
It follows that we can extract a convergent subsequence of $\phi_n$.\footnote{Actually, we already know that $\phi_n$ converges, since it stabilizes on $P^{-k}(\ov\Delta(P_a) \cup \ov V_a)$ for $n\geq k$, and the union of these sets is dense in $\hat\C$. The proof of convergence presented here is more general.}
By~\eqref{eq:star} its limit $\phi$ is conformal, hence $\phi(z)=az+b$.
Since it fixes $0$ we have $b=0$.
The maps $\phi_n$ and $\phi_{n-1}$ coincide on $P_{a}^{-{n-1}}(V_a)$, which contains $V_a$.
Passing to the limit, $\phi(z)=\phi_0(z)\sim z$ on $V_a$.
It follows that we have $a=1$.
Hence $\phi$ is the identity.
Since this holds for any choice of extracted subsequence, it follows that $\phi_n$ tends to the identity.
By passing to the limit in \cref{eq:1}, we get $P_a=P_b$.
\end{proof}

\begin{lemma}\label{lem:new:crit}
Let $x\in X\mapsto P= \Xi\circ\Phi(x)$ be a sub-family of $\Xi$, as per \Cref{def:subo}.
Assume that there exists 
\begin{itemize}
  \item $k\in\N$ with $2\leq k\leq d-1$,
  \item $m_2$, \ldots, $m_{k-1} \in \N$ (if $k=2$ this list has no entry)
\end{itemize}
such that $\forall x\in X$,
\begin{itemize}
\item $c_{1+k},\ldots,c_{d-1}$ are periodic,
\item $\forall n\in\N$ with $2\leq n \leq k-1$, $P^{m_n}(c_n)=c_1$.
\end{itemize}
Consider the subset $\tilde X$ of $X$ of parameters for which $c_k \notin \partial \Delta(P)$.
Let $x_*\in X$ and assume $x_*$ belongs to the boundary of a connected component $\tilde X_0$ of $\tilde X$.
Then for every neighborhood $V$ of $x_*$ there exists $x'\in V \cap \partial \tilde X_0$ such that $c_{k}$ (which belongs to $\partial \Delta(P)$) is eventually mapped to $c_1$.
\end{lemma}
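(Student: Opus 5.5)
We argue by contradiction. Suppose that on some neighborhood $V$ of $x_*$, which we may shrink to a small round disk (working in a chart, as in \Cref{lem:rel:sc}), every $x'\in V\cap\partial\tilde X_0$ has $c_k$ \emph{not} eventually mapped to $c_1$.

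\textbf{Preliminary observations.} A periodic critical point cannot lie on $\partial\Delta(P)$, because $P$ is conjugate to an irrational rotation there. So, by \Cref{thm:shi}, for every $x\in X$ one of $c_1,\ldots,c_k$ lies on $\partial\Delta(P)$. Since $P^{m_n}(c_n)=c_1$ and $\Delta$ is invariant, this forces $c_1\in\partial\Delta(P)$ or $c_k\in\partial\Delta(P)$. In particular:
\begin{itemize}
\item On $\tilde X$ we have $c_1\in\partial\Delta(P)$, so $\partial\Delta(P)$ undergoes a holomorphic motion over the connected open set $\tilde X_0$ by \Cref{lem:cbord}.
\item The set $\{c_k\in\partial\Delta(P)\}$ is closed by \Cref{lem:Delta:cont}. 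Hence at every point of $\partial\tilde X_0$ both $c_1$ and $c_k$ lie on $\partial\Delta(P)$, the former by closedness applied to $c_1$.
\end{itemize}
So for $x\in\partial\tilde X_0$ the relative internal angle $\alpha(x)$ from $c_1$ to $c_k$ is defined, and it is continuous by \Cref{lem:Delta:cont}. The contradiction hypothesis says exactly that $\alpha(x')\notin\Z\theta+\Z$ for $x'\in V\cap\partial\tilde X_0$.

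\textbf{Step 1: boundary points lie in the motion locus.} Let $X'$ be the holomorphic motion locus of \Cref{lem:rel:sc}. Fix $x'\in V\cap\partial\tilde X_0$. The orbits of $c_1$ and $c_k$ are then pairwise disjoint (their points sit on $\partial\Delta$ at angles differing by $\alpha\notin\Z\theta$). Near $x'$, $c_1$ stays on $\partial\Delta(P)$: at parameters of $\tilde X$ this holds by definition, and elsewhere it should follow from the fact that points of $\partial\Delta$ cannot collide holomorphically with a critical point that is off the disk (the Hurwitz argument of \Cref{lem:holomo:crit}). Then \Cref{lem:cbord} gives a local holomorphic motion, so $x'\in X'$. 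I expect this step to be the main obstacle. One must exclude that $c_1$ leaves $\partial\Delta$ on the side opposite to $\tilde X_0$ exactly at a point where $\alpha\notin\Z\theta$. I would try to show that the subset of $X\setminus\tilde X$ where $c_1\notin\partial\Delta$ has boundary only at parameters where the grand orbits of $c_1,c_k$ collide, by running the winding-number argument of \Cref{lem:rel:sc} on the joint orbit of $c_1$ and $c_k$.

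\textbf{Step 2: $x_*\notin X'$.} The index set $\{n: c_n\in\partial\Delta\}$ contains $k$ at $x_*$ but not on $\tilde X_0$. Any neighborhood of $x_*$ is connected and meets $\tilde X_0$, so \Cref{lem:holomo:crit} rules out a motion there. Hence $x_*\in\partial\tilde X_0\setminus X'$, and by Step 1 $x_*$ must be a point with $\alpha(x_*)\in\Z\theta+\Z$. If $x_*\in V\cap\partial\tilde X_0$, this already contradicts the contradiction hypothesis, and the proof ends here. Otherwise $x_*$ is a limit of boundary points, and we use Step 3.

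\textbf{Step 3: relative simple connectivity and rigidity.} Choose a Jordan curve in $V$ surrounding $x_*$ and contained in $\tilde X_0\cup(V\cap\partial\tilde X_0)\subset X'$. \Cref{lem:rel:sc} then puts $x_*$ in $X'$, contradicting Step 2. To build such a curve, and to rule out the degenerate case where $\partial\tilde X_0\cap V$ is too thin (for instance where $\alpha$ is locally constant on it), I would use \Cref{lem:rigid} with $Z$ given by $c_1,\ldots,c_k\in\partial\Delta$ and the other critical points periodic. For $2\le n\le k-1$ the angles of $c_n$ relative to $c_1$ are fixed by $P^{m_n}(c_n)=c_1$, so $\alpha$ is locally injective on $\Phi^{-1}(Z)\supset\partial\tilde X_0$ (assuming $\Phi$ is non-constant). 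A continuous, locally injective map from $\partial\tilde X_0\cap V$ to the circle, taking values off the dense set $\Z\theta$, should be incompatible with $x_*$ being a non-isolated boundary point. This fixes the topology of $\partial\tilde X_0$ near $x_*$ enough to construct the surrounding curve.
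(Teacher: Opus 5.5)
\textbf{There is a genuine gap: your Step 1 is false, and the whole argument rests on it.} Note that $x_*$ itself always lies in $V\cap\partial\tilde X_0$, so the ``otherwise'' branch of Step 2 never occurs and Step 1 carries everything.

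The reasoning you correctly give in Step 2 for $x_*$ applies verbatim to \emph{every} $x'\in\partial\tilde X_0$:
\begin{itemize}
\item Since $\tilde X_0$ is a component of the open set $\tilde X$, we have $\partial\tilde X_0\cap\tilde X=\emptyset$, so $c_k\in\partial\Delta(P)$ at $x'$.
\item Every connected neighborhood of $x'$ meets $\tilde X_0$, where $c_k\notin\partial\Delta(P)$.
\item So \Cref{lem:holomo:crit} forbids a local holomorphic motion at $x'$.
\end{itemize}
Hence $\partial\tilde X_0\cap X'=\emptyset$, whatever the value of $\alpha(x')$. Equivalently, by \Cref{lem:cbord}, $c_1$ must leave $\partial\Delta(P)$ at parameters arbitrarily close to every point of $\partial\tilde X_0$.

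So the mechanism you propose cannot hold, namely that $c_1$ leaves $\partial\Delta$ only where the orbits of $c_1$ and $c_k$ collide. For $d=3$, $\partial\tilde X_0$ is a piece of Zakeri's curve, along which the relative angle sweeps a whole interval of values. For the same reason, the inclusion $\tilde X_0\cup(V\cap\partial\tilde X_0)\subset X'$ in Step 3 is wrong. Also, ``$c_k$ not eventually mapped to $c_1$'' only excludes a one-sided set of angles, so it does not give disjointness of the two orbits.

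\textbf{How the paper assembles the right ingredients.} You list the correct tools (\Cref{lem:rel:sc}, \Cref{lem:rigid}, density of the orbit angles), but the paper uses them directly, with no contradiction hypothesis. Crucially, \Cref{lem:rel:sc} is applied only to a curve lying entirely inside $\tilde X_0$.
\begin{itemize}
\item In a chart, take $\ov B=\ov B(x_*,\eps)\subset V$, with $\eps$ small. Let $L$ be the connected component of $\ov B\cap\partial\tilde X_0$ containing $x_*$.
\item If $L=\{x_*\}$, \Cref{cor:sep} yields a Jordan curve in $\tilde X_0\subset X'$ separating $x_*$ from $\partial B$. Then \Cref{lem:rel:sc} gives $x_*\in X'$, contradicting your Step 2. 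So $L$ is a nondegenerate continuum.
\item On $L$, $\alpha$ is continuous. By \Cref{lem:rigid}, with the angles of $c_2,\ldots,c_{k-1}$ relative to $c_1$ frozen, and since $\Phi$ is non-constant, $\alpha(u)\neq\alpha(x_*)$ for $u\in L\setminus\{x_*\}$.
\item Thus $\alpha(L)$ is a nondegenerate arc of the circle. It must contain one of the densely many angles corresponding to $P^m(c_k)=c_1$, $m\in\N$, and the parameter realizing it lies in $L\subset V\cap\partial\tilde X_0$.
\end{itemize}
Connectedness of $L$ is what makes this intermediate-value argument work. Mere non-isolation of $x_*$ in $\partial\tilde X_0$, as in your Step 3, would not suffice: a Cantor set can map continuously and injectively into the circle while avoiding a countable dense set.
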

\begin{proof}
Note that $\tilde X$ and $\tilde X_0$ are open.

We claim that $\forall x\in \tilde X$, $c_1\in\partial \Delta(P)$.
Indeed, $\partial \Delta(P)$ contains at least one critical point $c_n$.
Since $\partial \Delta(P)$ contains no periodic critical point, we have $n\leq k$. By hypothesis on $\tilde X$, $n\neq k$, so actually $n<k$.
If $n=1$ we are done.
Otherwise, we can use $c_1 = P^{m_n}(c_n)$ and $P(\partial \Delta(P)) \subset \Delta(P)$.

By \Cref{lem:cbord} applied to $n=1$, $\partial \Delta(P)$ undergoes a holomorphic motion over every connected component of $\tilde X$.
By \Cref{lem:holomo:crit}, the set $I$ of indices of the critical points on $\partial \Delta(P)$ remains the same throughout $\tilde X_0$.
As we saw, $1\in I\subset\{1,\ldots, k-1\}$.
For every  $n\in I$, the relative angle from $c_n$ to $c_1$ on $\partial \Delta(P)$ remains equal to $m_n 2\pi\theta$ throughout $X_0$ because of the relation $P^{m_n}(c_n) = c_1$.
This is also the case on $\partial \tilde X_0$, by continuity.
For every $x\in \partial \tilde X_0$, the set indices of the critical points in $\partial \Delta(P)$ is $I\cup\{k\}$.

We may work in a chart of $X$, with image $U\subset \C$.
Denote $u_*$ the image of $x_*$.
Choose $\eps>0$ such that $\ov B:=\ov B(u_*,\eps)$ is contained in the image of $V$.
Denote $\tilde U$ and $\tilde U_0$ the images of $\tilde X$ and $\tilde X_0$ in the chart ($\tilde U_0$ is not necessarily connected but that does not impact the proof).
From now on we use the sub-family of $x\mapsto P_x$ indexed by $u\in U$.
By \Cref{lem:rigid} and the previous paragraph, on $B \cap (\partial \tilde U_0)-\{u_*\}$, the relative angle from $c_k$ to $c_1$ cannot equal the value it takes at $u_*$.

Let $L$ be the connected component containing $u_*$ of $\ov B \cap \partial \tilde U_0$.
We claim that $L$ is not reduced to a point.
Indeed, if $L$ were reduced to $u_*$ then there would exist a Jordan curve $\gamma$ in $B\cap \tilde U_0$ separating $u_*$ from $\partial B$ (see \Cref{sec:sep}: apply \Cref{cor:sep} to $V=U_0$ and $x = u_*$).
Denote $U'$ the subset of $U$ where there is a local holomorphic motion of $\partial \Delta(P)$.
In particular $\tilde U_0\subset\tilde U\subset U'$.
The curve $\gamma$ would be contained in $B\cap U'$ and \Cref{lem:rel:sc} would imply $u_*\in U'$, contradicting that in $U'$ the set of indices of critical points on $\partial \Delta$ cannot change locally (\Cref{lem:holomo:crit}).

Recall that for $n\leq k-1$, the relative internal angle from $c_n$ to $c_1$ is fixed.
By continuity the image of $L$ by the relative angle from $c_k$ to $c_1$, seen as a function of $u\in L$, is a connected subset of $\R/2\pi\Z$, hence an interval (or the whole set $\R/2\pi\Z$).
This interval is not reduced to a point, for otherwise the relative angle would be constant on $L$.
It thus contains a value of the form $m\theta$ for some $m\in \N$ (the set $\theta\N$ is dense in $\R/2\pi \Z$).
For a parameter yielding this value, $P^k(c_k)=c_1$.
\end{proof}

Finally we will need a result about activity of critical points. Given a sub-family $P_x=\Xi\circ\Phi(x)$ with $\Phi:X\subset \C \to \Lambda$, we recall that a critical point $c_n$ is called \emph{passive} on $X$ (relative to $\Phi$) when the family of function $x\mapsto P_x^n(c_n(\Phi(x)))$ is normal on $X$.
Otherwise it is called \emph{active} on $X$.
The \emph{activity locus} is the set of points $x\in X$ that do not have a neighborhood on which the restriction of $\Phi$ is passive. 

Dujardin and Favre proved the following, as a particular case of Theorem~4 in \cite{DF}.\footnote{Where it is stated for $X\subset \C$; the version for a general Riemann surface $X$ is an almost direct consequence; however we will only use it locally; they call linearization domain the set of points eventually falling in the Siegel disk; there is a slight mistake in the statement in \cite{DF} about the nature of the set $P$ in case~\eqref{case:c}, a correction is brought \cite{Chio-Roeder}, Section~2.5 page~508; for the weaker version given here, this makes no difference.}
\begin{theorem}[passive critical points]\label{thm:passive}
If $c_n$ is passive on the whole set $X$ (relative to $\Phi$) then denote $P$ the set of parameters in $X$ for which $c_n$ is preperiodic.
\begin{enumerate}
    \item\label{case:a} If $P=\emptyset$ then there is a holomorphic motion of the closure of the orbit of $c_n$ that commutes with the dynamics.
    \item\label{case:b} If $P=X$ then there are $m\neq n\in \N^2$ such that the relation $P_x^m(c)=P_x^n(c)$ holds for every $x\in X$.
    \item\label{case:c} If $P\neq \emptyset$ and $P\neq X$ then either there exists a persistently attracting cycle attracting $c_n$, or the orbit of $c_n$ eventually falls in the interior of a persistent\footnote{I.e.\ the Siegel disk of a persistent indifferent cycle of irrational rotation number.} periodic Siegel disk.
\end{enumerate}
\end{theorem}
The first case is a direct application of the $\lambda$-lemma of \cite{MSS} and of a continuity argument.

\begin{lemma}\label{lem:active}
Let $x\in X\mapsto P= \Xi\circ\Phi(x)$ be a sub-family of $\Xi$, as per \Cref{def:subo}.
Let $X'$ be the subset of $X$ where $\partial \Delta$ has a local holomorphic motion. Let $x_0\in X-X'$.
For every $n$ such that $c_n(\Phi(x_0))\in \partial \Delta_{x_0}$, then $x_0$ is in the activity locus of $c_n$ (relative to $\Phi$).
\end{lemma}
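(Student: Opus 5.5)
We argue by contraposition: assume $c_n(\Phi(x_0))\in\partial\Delta_{x_0}$ and that $c_n$ is passive on some connected neighborhood $V$ of $x_0$ (a disk in a chart). We will show $x_0\in X'$, i.e.\ that $\partial\Delta$ undergoes a holomorphic motion over a neighborhood of $x_0$. We apply \Cref{thm:passive} to the restriction of the sub-family to $V$, and let $P_V$ be the set of parameters in $V$ where $c_n$ is preperiodic. At $x_0$, $c_n$ lies on $\partial\Delta_{x_0}$, where $P_{x_0}$ is conjugate to an irrational rotation (by \Cref{thm:shi} the boundary is a Jordan curve). So the orbit of $c_n$ is infinite and pairwise distinct at $x_0$, hence $x_0\notin P_V$ and $P_V\neq V$.

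\emph{Case~\eqref{case:c} is excluded.} Suppose $P_V\neq\emptyset$. Then either $c_n$ is attracted by a persistently attracting cycle, or its orbit eventually falls in the interior of a persistent Siegel disk. Both conditions are open and hold on all of $V$ (the conclusion of \Cref{thm:passive} is a statement about the whole family). At $x_0$ the orbit of $c_n$ stays on $\partial\Delta_{x_0}\subset J(P_{x_0})$, so it is neither attracted to an attracting cycle nor eventually inside a Fatou component. This is a contradiction, so $P_V=\emptyset$ and we are in case~\eqref{case:a}.

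\emph{Case~\eqref{case:a}.} There is a holomorphic motion $h$ over $V$, with basepoint $x_0$, of the closure of the orbit of $c_n$, commuting with the dynamics. At $x_0$ this closure is exactly $\partial\Delta_{x_0}$, since the orbit is dense in a circle on which the map is an irrational rotation. Thus for each $x\in V$, $\Gamma_x:=h_x(\partial\Delta_{x_0})$ is a Jordan curve (a homeomorphic image by the $\lambda$-lemma) with $P_x(\Gamma_x)=\Gamma_x$, and $P_x$ is conjugate on $\Gamma_x$ to $R_\theta$. By \Cref{lem:rot}, $\Gamma_x$ lies in or bounds a period-one Siegel disk. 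Since $\Gamma_x$ contains the critical point $c_n(\Phi(x))=h_x(c_n(\Phi(x_0)))$, it cannot lie inside a Siegel disk, so it equals the boundary of one.

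It remains to check that this Siegel disk is $\Delta_x$, centered at $0$ rather than at another fixed point. The bounded component of $\C-\Gamma_x$ moves continuously with $x$ and contains $0$ at $x=0$. It contains exactly one fixed point, its center, and $0$ is a fixed point depending continuously on $x$ that can never lie on $\Gamma_x$ (the orbits on $\Gamma_x$ are infinite). Hence $0$ stays inside, is the center, and $\Gamma_x=\partial\Delta_x$. This gives a holomorphic motion of $\partial\Delta$ over $V$, so $x_0\in X'$, a contradiction.

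The main obstacle is ruling out case~\eqref{case:c} rigorously, i.e.\ making sure the dichotomy in \Cref{thm:passive} genuinely forces the behaviour at $x_0$ itself. If the theorem only gives this generically, I would instead use the openness of $V-P_V$ and its density, together with continuity of $\partial\Delta$ (\Cref{lem:Delta:cont}), to reach the same contradiction near $x_0$.
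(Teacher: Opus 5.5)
Your proof is correct and follows the paper's argument. You assume passivity near $x_0$ and apply \Cref{thm:passive}, ruling out cases (b) and (c) from the behaviour of $c_n$ at $x_0$ itself; the paper reads case (c) as holding at every parameter exactly as you do, so your fallback paragraph is unnecessary. In case (a) you transport $\partial\Delta_{x_0}$ by the motion and conclude with \Cref{lem:rot}. Your extra check that $0$ stays inside $\Gamma_x$ and is therefore its center (so that $\Gamma_x=\partial\Delta_x$), where you wrote $x=0$ for $x=x_0$, fills in a step the paper leaves implicit.
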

\begin{proof}
Assume the contrary and let $V\subset X$
be such that $c_n$ is passive on $V$.
Apply \Cref{thm:passive} on $X=V$.
We cannot be in case~\eqref{case:b} because $c_n$ is not preperiodic for $x=x_0$.
Nor can we be in case~\eqref{case:c} because for $x=x_0$, $c_n$ is neither in an attracting basin nor eventually falls in a Siegel disk.
We are thus in case~\eqref{case:a} and there is a holomorphic motion of the closure of the orbit of $c_n$ that commutes with the dynamics.
For $x=x_0$, this closure is $\partial \Delta_{x_0}$.
For every $x\in V$, the closure is thus a Jordan curve on which $P_x$ is conjugated to an irrational rotation.
By \Cref{lem:rot} it is contained in a Siegel disk or equal to the boundary of a Siegel disk.
Since it contains a critical point, it is the second possibility that holds.
\end{proof}

\subsection{Proof}\label{sub:pf:main}

We assume $d\geq 3$ for otherwise there is nothing to prove ($\cal P_2[\theta]= I_{2,1}[\theta]$ is a point).
Below we denote $\cal P=\cal P[\theta]$, $\Lambda = \Lambda_d$, $ I_k =  I_{k}[\theta]$. 

By \Cref{thm:r:cont} the function $P\in \cal P\mapsto \rad (P)$ is continuous.
By \Cref{lem:rto0} it tends to $0$ when $P \to \infty$.
Hence it reaches its maximum at some (not necessarily unique)
\[P_* \in \cal P.\]
Let $\eta>0$.
We will prove that 
\begin{equation}\label{eq:0}
  \exists P\in  I_{d-1}\text{ such that }\rad(P)\geq\rad(P_*)-\eta
  .
\end{equation}
Since $ I_{d-1}$ is compact,\footnote{See the paragraph before the main theorem in the introduction.} the main theorem will follow by taking limits as $\eta\to 0$.

We recall that by Shishikura's theorem (\Cref{thm:shi}), for every $P\in \cal P$, there is at least one critical point on $\partial \Delta(P)$ (this also follows from \cite{GS}).

\bigskip

We first cover the case $d=3$, because it is simpler: we will directly prove that $\exists P\in  I_{d-1}\text{ such that }\rad(P)\geq\rad(P_*)$ (hence $\rad(P)=\rad(P_*)$).
Note that $\cal P$ and $\Lambda$ have complex dimension one.
If $P_*\in  I_{2}$ then we are done.
Otherwise $P_*\in  I_1$, which is an open subset of $\cal P$.
Denote $P_\lambda=\Xi(\lambda)$,  $\cal J_1=\Xi^{-1}( I_1)$ (it is an open subset of $\Lambda$), $\cal J_2=\Xi^{-1}( I_2)$ and choose $\lambda_*\in \Lambda$ such that $P_* = P_{\lambda_*}$.
By definition of $ I_1$, for every parameter in $\cal J_1$, there is a unique $n\in\{1,2\}$ such that $c_n \in \partial \Delta(P)$ (in particular, $c_1 \neq c_2$).
By continuity of $P\mapsto \partial \Delta(P)$, $n$ is constant on any connected component of $\cal J_1$.
By \Cref{lem:cbord} applied to such a component and \Cref{thm:harmo:1} applied to $W=\Delta(P)$, the function $\lambda\mapsto \log \rad(P_\lambda)$ is harmonic on every connected component of $\cal J_1$, hence on $\cal J_1$.
(Here we only use \Cref{lem:cbord,thm:harmo:1} locally.)
It is either locally constant near $\lambda_*$ or not, but that will not change the argument.
Recall also that $\log \rad$ tends to $0$ at infinity in $\cal P$.
By the maximum principle for harmonic functions, there exists $\lambda_1$ in the boundary of the connected component $A$ of $\cal J_1$ containing $\lambda_*$, for which $\log \rad(P_{\lambda_1})\geq \log \rad (P_*)$.
Since $\cal J_1$ is open in $\Lambda$ and $\cal J_1, \cal J_2$ is a partition of $\Lambda$, it follows that $\partial A\subset \cal J_2$, hence $P_{\lambda_1}\in  I_2$. Q.E.D.

\bigskip

We now treat the case $d\geq 4$ (the argument will also work for $d=3$). We proceed in two steps.

\loctitle{Step 1.} 

\begin{proposition}\label{prop:step:1}
  For every $\eta>0$ and $0\leq q \leq d-2$, there exists $P\in\cal P$ such that $\rad(P)\geq\rad(P_*)-\eta$ and $P$ has at least $q$ periodic critical points (counted with multiplicity).
\end{proposition}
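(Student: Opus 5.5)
Induction on $q$; for $q=0$ take $P=P_*$. Assume we have $P_0=\Xi(\lambda_0)$ with $\rad(P_0)\geq\rad(P_*)-\eta/2$ and critical points $c_{d-q},\ldots,c_{d-1}$ periodic (after relabelling). Since $q\leq d-3$ when we still need to add one, at least two critical points are not periodic. We want a nearby parameter with one more periodic critical point and $\rad$ decreasing by at most $\eta/2$. By \Cref{thm:r:cont}, $\rad$ is continuous on $\cal P$, so it suffices to find such parameters arbitrarily close to $\lambda_0$. Each time, choose $\eta_q=\eta/2^{q+1}$ so the losses sum to less than $\eta$.

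We work in the subvariety $\Lambda_q\subset\Lambda$ where the periodic relations $P^{p_j}(c_j)=c_j$ of $c_{d-q},\ldots,c_{d-1}$ persist. Since superattracting cycles persist transversally, it is a smooth complex submanifold of dimension $d-2-q\geq 1$ near $\lambda_0$. This follows from transversality for postcritically-finite-type relations, or simply from the implicit function theorem for the multiplier-zero equations. Pick a holomorphic disk $\Phi:\D\to\Lambda_q$ through $\lambda_0$ and consider the free critical points $c_1,\ldots,c_{d-1-q}$ along it.

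\emph{Case 1: some free critical point $c_n$ is active at $\lambda_0$ for some disk $\Phi$.} By the standard Montel argument (as in Levin/McMullen), active critical points can be made periodic, even superattracting, at parameters arbitrarily close to $\lambda_0$. The functions $x\mapsto P_x^m(c_n(x))$ are not normal, so they cannot avoid the three holomorphically moving values given by a repelling cycle. Hence $c_n$ becomes preperiodic, and by a further perturbation of the usual type, periodic. Since these parameters are arbitrarily close to $\lambda_0$, continuity of $\rad$ concludes this case.

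\emph{Case 2: every free critical point is passive at $\lambda_0$ along every disk.} Take the free critical point $c_n$. If it lies on $\partial\Delta$, then by \Cref{lem:active} the boundary $\partial\Delta$ moves holomorphically near $\lambda_0$. By \Cref{thm:harmo:1}, $\log\rad$ is harmonic along the disk. By the maximum principle, moving within a component of the passive/holomorphic-motion locus in $\Lambda_q$, we can push to its boundary without decreasing $\rad$. The locus cannot be all of $\Lambda_q$, because $\rad\to0$ at infinity by \Cref{lem:rto0} while it is $>0$ at $\lambda_0$. At a boundary point some critical point becomes active, and we return to Case 1. If instead a free critical point is off $\overline\Delta$ and passive, \Cref{thm:passive} gives the possibilities. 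In the persistent-attracting case we can perturb to make it superattracting, which is a local move within the basin multiplier, hence changes $\rad$ little. If it is persistently preperiodic, or falls in the Siegel disk, or undergoes holomorphic motion (case a), the Siegel boundary still moves holomorphically. We then again use harmonicity of $\log\rad$ and the maximum principle to reach a boundary point of the passivity locus without loss.

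The main obstacle is Case 2: making the maximum-principle push rigorous in a possibly multi-dimensional $\Lambda_q$. The subtle point is that passivity of \emph{all} free critical points (including those on $\partial\Delta$) on an open set is exactly where $\log\rad$ is pluriharmonic, and we must show such a set has a boundary point reachable without decreasing $\rad$ below $\rad(P_0)$. I would restrict to a one-dimensional slice $\Phi$ (possible since $\dim\Lambda_q\geq1$), use harmonicity of $\log\rad$ on the component of $\lambda_0$ in the passivity locus of the slice, and use \Cref{lem:rto0} together with properness of the slice to guarantee the boundary is reached in $\Lambda_q$. One also has to check that persistently attracting non-superattracting cycles are handled; there I would move the multiplier to $0$ along a slice, again as a harmonic-function argument.
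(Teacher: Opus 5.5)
Your overall mechanism is the paper's. On a one-parameter slice, $\log\rad$ is harmonic on the locus $X'$ where $\partial\Delta$ moves holomorphically. The maximum principle pushes you to a point of $\partial X'$ without decreasing $\rad$. There \Cref{lem:active} makes the critical point on $\partial\Delta$ active, and Montel gives a nearby parameter where it is periodic.

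But the step you flag as the main obstacle is a genuine gap as written. The slice you actually choose, a holomorphic disk $\Phi:\D\to\Lambda_q$ through $\lambda_0$, does not support the maximum principle. The component of the harmonicity locus containing $\lambda_0$ may run out through the edge of the disk, so the supremum of $\log\rad$ on it may only be approached there, where you have no information. \Cref{lem:rto0} controls $\rad$ only when the parameter tends to infinity in $\Lambda$, which a local disk never does. For the same reason, ``the locus cannot be all of $\Lambda_q$'' gives nothing on the slice. What is needed is a \emph{global} one-dimensional slice on which $\Phi$ is proper, and you do not say where it comes from.

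The paper obtains it from algebraicity. The relations $P^{p_n}(c_n)=c_n$ cut out an algebraic subset $A\subset\Lambda\subset\C^{d-1}$ whose irreducible components have dimension $\geq 1$; this follows by counting equations, with no smoothness needed. So there is an irreducible algebraic curve $A_1\subset A$ through $\lambda_0$. It is unbounded, and its normalization $\Phi:X\to A_1$ has $X$ a compact Riemann surface minus finitely many (at least one) punctures at which $\Phi\to\infty$. Then $\log\rad\to-\infty$ at the punctures: some $c_j\to\infty$, so some $c_j\to 0$ since the product is fixed. Hence the component of $X'$ containing $x_0$ is not all of $X$, and $\log\rad$ reaches a value $\geq\log\rad(P_0)$ on its boundary in $X$.

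With this curve, several parts of your proposal become unnecessary, which is just as well:
\begin{itemize}
\item Your claim that $\Lambda_q$ is smooth via the implicit function theorem presupposes a transversality result you do not prove.
\item The ``off $\overline{\Delta}$'' branch of Case 2 is superfluous. Some non-periodic, hence free, critical point always lies on $\partial\Delta$ (\Cref{thm:shi}). That branch also claims a persistently attracting cycle can be made superattracting by a small move, which is false in general: the multiplier is a holomorphic function on the slice that may be constant, or vanish only far from $\lambda_0$.
\item The ``some disk / every disk'' dichotomy can be dropped by working on $X$ alone. Either $x_0\notin X'$ and \Cref{lem:active} applies at $x_0$, or you first move to a point of $\partial X'$.
\end{itemize}
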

\begin{proof}
  We proceed by induction.
  The case $q=0$ is trivial (take $P=P_*$).
  Assume that $q>0$ and that the case $q-1$ is proved.
  Let $\eps>0$.
  By the induction hypothesis, $\exists P_0\in\cal P$ such that $P_0$ has at least $q-1$ periodic critical points and $\rad(P_0)\geq\rad(P_*)-\eps$.
  We can reindex the critical points and choose $\lambda_0=(c_1,\ldots,c_{d-1})\in\Lambda$ such that $P_0=\Xi(\lambda_0)$ and for every $1\leq n\leq q-1$, $c_n$ is periodic of period denoted $p_n$.
  
  Let $A$ be the algebraic subset of $\Lambda$ (which is itself an algebraic subset of $\C^{d-1}$), defined by the conditions $P^{p_n}(c_n)=c_n$ for the values of $n$ above.
  There are at most $d-3$ conditions so its irreducible components have dimension at least $1$.
    
  Consider any irreducible $1$-dimensional algebraic subset $A_1$ of $A$ containing $\lambda_0$:
  \[ A_1 \subset A \subset \Lambda \subset \C^{d-1}
  .\]
  Such a set exists, and cannot be bounded.
  It can be desingularized via a surjective
  holomorphic map 
  \[ \Phi : X \to A_1
  \]
  from a compact connected Riemann surface $X$ minus finitely many points (at least one) to $\C^{d-1}$ taking values in $A_1$ and tending to infinity (in $\C^{d-1}$) at the punctures of $X$, and such that for any non-singular point of $A_1$, the preimage has exactly one element.
  We denote $x_0\in X$ such that $\Phi(x_0)=\lambda_0$.

  In the family parametrized by $x\in X$, consider the set $X'$ where there is local holomorphic motion of $\partial \Delta(P)$.
  If $x_0\in X'$ then, since by \Cref{thm:harmo:1} the function $x \mapsto \log \rad (\Xi\circ\Phi(x))$ is harmonic on $X'$ (here we only use \Cref{thm:harmo:1} locally) and tends to $-\infty$ at the punctures of $X$, there exists $x_{0'}$ in the boundary in $X$ of $X'$ such that, denoting $P_{0'}=\Xi\circ\Phi(x_{0'})$, $\log \rad(P_{0'}) \geq \log \rad (P_0) \geq \log \rad (P_*)-\eps$. In that case we replace $x_0$ by $x_{0'}$.

  We can thus assume without loss of generality that $x_0\notin X'$.
  By \Cref{lem:active} every critical point on $\partial \Delta(P_0)$ is active at $x_0$ for the family $\Xi\circ\Phi$.
  Since there is at least one critical point (actually, at least two) on this boundary, we choose one and, up to reindexing, it is $c_q$.
  By a classical normal family argument dating from Fatou and Julia,\footnote{See for instance \cite{Be}, Lemma~3.1.8, where the word \emph{or} means the reader has the choice between the two properties.} there are parameters $x_1$, arbitrarily close to $x_0$, such that $c_q$ is periodic.
  By choosing $x_1$ close enough to $x_0$, given $\eps'>0$ we can ensure by continuity of $P\mapsto \rad(P)$ that $\rad (P_1) \geq \rad (P_0) - \eps'$, where $P_1=\Xi\circ\Phi(x_1)$.
  
  By choosing $\eps+\eps' \leq \eta$, this proves heredity of the induction hypothesis.
\end{proof}

\loctitle{Step 2.}

\begin{proposition}\label{prop:step:2}
  For every $\eta>0$ and $1\leq k\leq d-1$, there exists $\lambda\in \Lambda$ and $m_2$, \ldots, $m_k$ (if $k=1$, this list is empty) such that, denoting $P = \Xi(\lambda)$:
  \begin{enumerate}
    \item $c_1\in\partial \Delta(P)$, 
    \item $\forall 2\leq n \leq k$, $c_n\in \partial \Delta(P)$ and $f^{m_n}(c_n) = c_1$,
    \item\label{item:star} every other critical point is periodic,
    \item $\rad(P)\geq \rad(P_*)-\eta$.
  \end{enumerate}
  Point \eqref{item:star} concerns $c_n$ for $n>k$; if $k=d-1$ this point is empty.
\end{proposition}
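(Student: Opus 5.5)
Induction on $k$. For $k=1$, apply \Cref{prop:step:1} with $q=d-2$. It gives $P$ with $\rad(P)\geq\rad(P_*)-\eta$ and $d-2$ periodic critical points. By \Cref{thm:shi} at least one critical point lies on $\partial\Delta(P)$, and since a periodic point cannot lie on the boundary of a Siegel disk, it must be the remaining one. Reindex it as $c_1$. This proves the case $k=1$.

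For the heredity, assume the statement for $k-1$ with $k\geq 2$, and fix $\eps>0$. The induction hypothesis gives $\lambda_0$ with $\rad(\Xi(\lambda_0))\geq\rad(P_*)-\eps$. At $\lambda_0$ the critical points $c_2,\ldots,c_{k-1}$ lie on $\partial\Delta$ with $P^{m_n}(c_n)=c_1$, and $c_k,\ldots,c_{d-1}$ are periodic with periods $p_n$. Consider the algebraic subset of $\Lambda$ defined by the $d-3$ equations
\[
P^{m_n}(c_n)=c_1\quad (2\leq n\leq k-1),\qquad P^{p_n}(c_n)=c_n\quad (k+1\leq n\leq d-1),
\]
where $c_k$ has been freed. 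Its components through $\lambda_0$ have dimension $\geq 1$. As in \Cref{prop:step:1}, take an irreducible curve $A_1$ through $\lambda_0$ and desingularize it by $\Phi:X\to A_1$, with $X$ a punctured compact Riemann surface. Along this family the hypotheses of \Cref{lem:new:crit} hold.

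On the open set $\tilde X$ where $c_k\notin\partial\Delta(P)$, the argument at the start of the proof of \Cref{lem:new:crit} shows that $c_1\in\partial\Delta$. Hence $\partial\Delta$ moves holomorphically there by \Cref{lem:cbord}, and $\log\rad$ is harmonic on $\tilde X$ by \Cref{thm:harmo:1}. Let $\tilde X_0$ be the component of $\tilde X$ containing $x_0$; it contains $x_0$ because $c_k$ is periodic at $x_0$. Since $\log\rad\to-\infty$ at the punctures (\Cref{lem:rto0}), the maximum principle yields $x_*\in\partial\tilde X_0$ with $\rad\geq\rad(P_0)$. At $x_*$ we have $c_k\in\partial\Delta$, and $c_k$ is no longer periodic. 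By \Cref{lem:new:crit}, arbitrarily close to $x_*$ there is $x'\in\partial\tilde X_0$ with $c_k\in\partial\Delta$ and $P^{m_k}(c_k)=c_1$. Continuity of $\rad$ (\Cref{thm:r:cont}) gives $\rad(P_{x'})\geq\rad(P_*)-\eps-\eps'$.

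At $x'$ the other relations persist because they hold identically on $X$. Moreover $c_1,\ldots,c_{k-1}$ remain on $\partial\Delta$: the index set $I$ is constant on $\tilde X_0$ by \Cref{lem:holomo:crit}, and by continuity (\Cref{lem:Delta:cont}) the boundary points of $\tilde X_0$ carry $I\cup\{k\}$. One also needs $c_2,\ldots,c_{k-1}$ to actually be on $\partial\Delta$ throughout $\tilde X_0$. They are at $x_0$, and $I$ is constant on $\tilde X_0$, so this holds. Taking $\eps+\eps'\leq\eta$ closes the induction.

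The main obstacle is the appeal to \Cref{lem:new:crit}, whose proof in turn rests on the rigidity of \Cref{lem:rigid}. The point is to ensure that the boundary of $\tilde X_0$ is not a single point near $x_*$, so that the relative angle of $c_k$ actually varies and therefore hits a value of the form $m\theta$. This is already packaged in the earlier lemmas, so the induction mainly needs to verify their hypotheses.
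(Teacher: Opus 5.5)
Your proof is correct and follows the paper's argument essentially step by step. It uses the same base case via \Cref{prop:step:1} with $q=d-2$, frees $c_k$ on a curve cut out by the remaining $d-3$ relations, and uses harmonicity of $\log\rad$ on the component $\tilde X_0$ where $c_k\notin\partial\Delta$, the maximum principle to reach $\partial\tilde X_0$, and then \Cref{lem:new:crit} together with constancy of the index set (\Cref{lem:holomo:crit}) to create the relation $P^{m_k}(c_k)=c_1$ while keeping $c_1,\ldots,c_{k-1}$ on $\partial\Delta$. The only difference is cosmetic: you cite the opening claim of the proof of \Cref{lem:new:crit} for $c_1\in\partial\Delta$ on $\tilde X$, whereas the paper reproves it inline.
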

\begin{proof}
This is proved by induction on $k$. The case $k=1$ follows from \Cref{prop:step:1} with $q=d-2$ and from the fact that there is always a critical point on $\partial \Delta(P)$.
We may assume it is $c_1$ up to reindexing.

Let $2\leq k\leq d-1$. Choose $\eps>0$.
We assume the proposition is proved for $k-1$, and we let $\lambda_0\in\Lambda$ such that the claim holds for $k-1$ with
\[\rad(P_0)\geq \rad(P_*)-\eps \]
where $P_0= \Xi(\lambda_0)$.
For $k \leq n\leq d-1$, denote $p_n$ the period of $c_n$ for $\lambda_{0}$.

We start by releasing the condition on $c_k$.  
The $d-3$ conditions $f^{m_n}(c_n) = c_1$ for $2\leq n\leq k-1$ and $f^{p_n}(c_n)=c_n$ for $n\geq k+1$ define an algebraic subset
\[A \subset \Lambda\subset \C^{d-1}\]
of dimension at least $1$.

Exactly as in the proof of \Cref{prop:step:1}, we choose any irreducible $1$-dimensional algebraic subset $A_1$ of $A$ containing $\lambda_0$ (this exists and is non-bounded in $\C^{d-1}$) and a surjective desingularization:
\[ \Phi : X \to A_1
\]
where $X$ is a compact Riemann surface minus finitely many points where $\Phi$ tends to infinity in $\C^{d-1}$.
We denote $x_0\in X$ such that $\Phi(x_0)=\lambda_0$.

For every parameter $\lambda\in A$, $c_1\in\partial\Delta(P)$ or $c_k\in\partial\Delta(P)$. 
Indeed, there is at least one critical point $c_n$ in $\partial \Delta(P)$, and either $n=1$ and we are done, or $n>1$ in which case: it cannot be $>k$ for, by definition of $A$, it would be periodic but there are no periodic point on $\partial \Delta(P)$ as we already saw many times; so either $n=k$ or $n<k$ in which case $c_n$ it is eventually mapped to $c_1$, so $c_1\in \partial \Delta(P)$.

Since for $\lambda_0$, $c_k$ is periodic, hence superattracting, it follows that for all parameters $\lambda\in A$ close to $\lambda_0$, $c_k\notin \partial\Delta(P)$, so $c_1\in \partial\Delta(P)$ for those parameters. 

Consider the sub-family parametrized by $X$.
Let $x_0\in X$ be such that $\Phi(x_0)=\lambda_0$.
Consider the set $\tilde X\subset X$ of parameters for which $c_k\notin\partial \Delta(P)$.
It is open by continuity of $P\mapsto \partial \Delta(P)$.
By the previous paragraph, $x_0\in \tilde X$. Let $\tilde X_0$ be the connected component of $\tilde X$ that contains $x_0$.

On $\tilde X$, we have $c_1\in \partial \Delta(P)$ hence by \Cref{lem:cbord} there is holomorphic motion of $\partial \Delta(P)$ on every connected component of $\tilde X$ (having local holomorphic motion is enough for this proof). So by \Cref{thm:harmo:1} the function $x \mapsto \log \rad (\Xi\circ\Phi(x))$ is harmonic on $\tilde X$.
(Again, we only use \Cref{lem:cbord,thm:harmo:1} locally.)
Since this function tends to $-\infty$ at the punctures of $X$, it follows that there exists $x_{1}$ in the boundary in $X$ of $\tilde X_0$ such that, denoting $P_{1}=\Xi\circ\Phi(x_{1})$, $\log \rad(P_{1}) \geq \log \rad (P_0)$.
In $\tilde X_0$, by the holomorphic motion and by \Cref{lem:holomo:crit}, the critical points $c_1$, \ldots, $c_{k-1}$ (recall $k\geq 2$) remain in $\partial \Delta(P)$, so by continuity this is the case too for any parameter on $\partial \tilde X_0$, like $x_{1}$.

Since $\tilde X$ is open, for parameter $x_1$ we have $c_k \in \partial \Delta(P)$.
We then apply \Cref{lem:new:crit}: there exist parameters $x_2\in\partial \tilde X_0$ arbitrarily close to $x_1$ such that $c_k\in\partial \Delta(P)$ and $c_k$ is eventually mapped to $c_1$.
Since, as we just saw, the critical relations are preserved on $\partial \tilde X_0$, we get the desired relations.  

Given $\eps'>0$, for $x_2$ close enough to $x_1$, we can ensure by continuity of $P\mapsto \rad(P)$ that $\log \rad(P_2)\geq \log \rad(P_1)-\eps'$, denoting $P_2 = \Xi\circ\Phi(x_{2})$.
Choosing $\eps'+\eps\leq \eta$, this proves heredity of the induction hypothesis.
\end{proof}

\Cref{prop:step:2} applied to $k=d-1$ gives us \cref{eq:0}, which, as we saw, proves the main theorem.


\section{Auxiliary statements}\label{app}

We prove here a few results that we consider as known or well-known, yet for which we have difficulties to find direct references.

\subsection{Plane topology}\label{sec:sep}

The objective is to justify \Cref{cor:sep} below.
We start by a few generalities.

\begin{lemma}\label{lem:top1}
  Let $K$ be a compact space, $C$ a connected component of $K$ and $U$ an open subset of $K$ containing $C$.
  Then there exists a partition of $K$ into two closed (hence open) subsets $K_1$ and $K_2$, with $C\subset K_1 \subset U$.
\end{lemma}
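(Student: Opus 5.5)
Since $K$ is compact Hausdorff, I would argue through quasi-components. In a compact Hausdorff space the connected component $C$ of a point coincides with its quasi-component: $C$ is the intersection of all clopen subsets of $K$ that contain $C$. Here $K$ is implicitly Hausdorff, and in the application it is a compact subset of $\C$. Granting this equality, the complement $K\setminus U$ is compact and is covered by the open sets $K\setminus F$, where $F$ ranges over the clopen sets containing $C$. Extracting a finite subcover $K\setminus F_1,\dots,K\setminus F_m$, I set $K_1=F_1\cap\cdots\cap F_m$. Then $K_1$ is clopen, contains $C$, and is contained in $U$. Finally I take $K_2=K\setminus K_1$, which gives the required partition into two closed (hence open) sets.

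The main step is therefore the equality of the component and the quasi-component. Let $Q$ be the intersection of all clopen sets containing a point $c\in C$. Clearly $C\subset Q$, since $C$ is connected and each clopen set containing $c$ must contain all of $C$. For the reverse inclusion, it suffices to show that $Q$ is connected.

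Suppose, for contradiction, that $Q=A\cup B$ with $A,B$ disjoint and closed in $Q$, and $c\in A$. Then $A$ and $B$ are closed in $K$, hence compact. By normality of $K$ there are disjoint open sets $O_A\supset A$ and $O_B\supset B$. The set $Q$ is contained in $O_A\cup O_B$. The same compactness argument as above, applied to the closed set $K\setminus(O_A\cup O_B)$, yields finitely many clopen sets containing $c$ whose intersection $F$ satisfies $F\subset O_A\cup O_B$.

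Now $F\cap O_A = F\setminus O_B$ is both open and closed in $K$, since $F$ is clopen and $F\subset O_A\cup O_B$. It contains $c$, so by definition of $Q$ we get $Q\subset F\cap O_A$. This forces $B\subset Q\cap O_B\subset O_A\cap O_B=\emptyset$, so $B=\emptyset$. Hence $Q$ is connected and $Q=C$.

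This connectedness of the quasi-component is the only genuinely nontrivial point of the argument. It is where compactness and the Hausdorff property are both used: compactness for the finite extraction, and normality for the separating open sets $O_A,O_B$.
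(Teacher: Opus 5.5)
Your proof is correct. The paper does not prove this lemma; it only cites Theorem~3, p.~85 of \cite{Mo}.

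What you supply is the standard self-contained proof of that fact, in two steps:
\begin{enumerate}
\item The \v{S}ura-Bura lemma: in a compact Hausdorff space the component of a point equals its quasi-component. You prove this by combining normality with a finite-subcover extraction.
\item A second compactness argument on $K\setminus U$, which gives a single clopen set lying between $C$ and $U$.
\end{enumerate}
The citation buys brevity. Your version makes the lemma self-contained and shows exactly where compactness and separation are used.

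You are right that the Hausdorff hypothesis is needed: the statement fails for some quasi-compact non-Hausdorff spaces. It is implicit in the (French) convention that ``compact'' includes Hausdorff, and it holds anyway in the application, where $K$ is a compact subset of the plane.

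The degenerate cases, where $K\setminus U$ or $K\setminus(O_A\cup O_B)$ is empty, are covered by taking the empty intersection, i.e.\ $F=K$ or $K_1=K$ with $K_2=\emptyset$. This is harmless for how the lemma is used in the proof of Lemma~\ref{lem:sep}, which only uses $K_1$.
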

\begin{proof}
  This is for instance a corollary of \cite{Mo}, Theorem~3 page~85.
\end{proof}

\begin{lemma}\label{lem:sep}
  Let $K$ be a compact subset of the plane and $x\in K$.
  Assume that $\{x\}$ is a connected component of $K$. Then for all $r>0$ there exists a simple closed curve contained in $B=B(x,r)$, disjoint from $X$ and separating $z$ from $\partial B$.
\end{lemma}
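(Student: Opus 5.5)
We prove the statement (reading ``disjoint from $X$'' as ``disjoint from $K$'' and ``separating $z$'' as ``separating $x$'') by combining \Cref{lem:top1} with a grid approximation argument.

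\emph{Step 1 (isolating $x$ in $K$).} Apply \Cref{lem:top1} to the compact set $K$, its component $C=\{x\}$, and the relatively open set $U=K\cap B(x,r/2)$. This gives a partition $K=K_1\sqcup K_2$ into disjoint compact sets with $x\in K_1\subset B(x,r/2)$. Then $\delta:=\min\bigl(\dist(K_1,K_2),\,\dist(K_1,\partial B(x,r/2))\bigr)>0$, where we set $\dist(K_1,\emptyset)=+\infty$.

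\emph{Step 2 (a polygonal neighbourhood).} Cover the plane by a square grid of mesh $h<\delta/10$. Let $Q$ be the union of the closed grid squares that meet $K_1$. Then $Q$ is a compact polygonal set with $x$ in its interior (taking $Q$ to contain all squares touching $x$), $Q\subset B(x,r)$, and $Q\cap K_2=\emptyset$. In addition, $\partial Q$ is disjoint from $K_1$: a point of $K_1$ lies in the interior of the union of all squares containing it, and all those squares belong to $Q$. Hence $\partial Q\cap K=\emptyset$.

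\emph{Step 3 (extracting a Jordan curve).} Let $Q_x$ be the connected component of $\operatorname{int} Q$ containing $x$. Consider the unbounded component $\Omega$ of $\C-\ov{Q_x}$, and let $D$ be the component of $\C-\partial\Omega$ containing $x$. The set $D$ is a simply connected polygonal domain, since we have filled in the holes, and its boundary $\partial D\subset\partial\Omega\subset\partial Q_x\subset\partial Q$.

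The main obstacle is that this boundary need not be a simple closed curve: the grid squares may touch at corners, producing pinch points. We handle this by a local modification. At each vertex where four squares meet in a checkerboard pattern, add a small square around the vertex to $Q$ (or, alternatively, remove one). This does not change the needed distance properties once $h$ is small. After the modification, $\partial D$ is a finite union of grid segments forming a $1$-manifold, and being connected it is a polygonal Jordan curve $\gamma$.

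\emph{Conclusion.} By construction $\gamma\subset\partial Q\subset B(x,r)$ and $\gamma\cap K=\emptyset$. Moreover $x\in D$ while $\partial B$ lies in the unbounded complementary component, so $\gamma$ separates $x$ from $\partial B$.

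The only delicate point is the Jordan-curve extraction in Step 3. To avoid the case analysis of the grid fix, we can instead invoke the classical fact that the boundary of a bounded simply connected domain whose complement is connected and locally connected (here polygonal) contains a Jordan curve separating any interior point from infinity.
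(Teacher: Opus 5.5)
Your argument is correct, reading $K$ for $X$ and $x$ for $z$ as intended. It shares its first step with the paper: \Cref{lem:top1} gives a clopen piece $K_1\ni x$ of $K$ inside $B(x,r/2)$. After that the routes diverge.

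The paper notes that $\{x\}$ is a component of the closed set $F=\partial B\cup K_1$. It then quotes Newman's theorem: two components of a closed subset of the sphere are separated by a simple closed curve in its complement. You instead build the curve by hand, as the outer boundary of the component of $\operatorname{int}Q$ containing $x$, where $Q$ is a fine grid neighbourhood of $K_1$. This is in the spirit of the usual grid proof of that separation theorem, specialised to the present situation.

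The citation makes the paper's proof two lines. Your version is self-contained, and through $Q\cap K_2=\emptyset$ it makes explicit that the curve avoids all of $K$. A curve in the complement of $\partial B\cup K_1$ need not do this a priori; applying Newman to $\partial B\cup K$, in which $K_1$ is still clopen, repairs that.

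The cost is Step~3, which needs a few more words:
\begin{itemize}
\item Of your two fixes, only adding a small square is legitimate. Removing a square that meets $K_1$ may put points of $K_1$ on $\partial Q$.
\item Once $\partial Q$ is a compact $1$-manifold, a local two-sides argument is needed. It shows successively that $\partial Q_x$, $\partial\Omega$ and $\partial D$ are unions of components of $\partial Q$.
\item $\partial D$ is connected because $D$ is bounded and simply connected; this should be said.
\end{itemize}

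In fact no fix is needed at all. Suppose both $Q$-squares at a checkerboard vertex lie in $\ov{Q_x}$. A path in $Q_x$ joining them closes up through the vertex into a Jordan curve contained in $\ov{Q_x}$, and this curve encloses one of the two empty squares. Hence $\Omega$ can approach the vertex from only one side, and $\partial\Omega$ has no pinch there.
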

\begin{proof}
By \Cref{lem:top1} there exists a partition of $K$ into two closed subsets $K_1$ and $K_2$ such that $x\in K_1 \subset B(x,r/2)$.
The union $F:=\partial B\cup K_1$ is disjoint. The  connected component of $F$ containing $x$ is thus contained in $K_1$, hence in $K$, so equals $\{x\}$.

In \cite{Newman}, Theorem~3.3 page~143, is proved that two different connected components $F_1$ and $F_2$ of a closed subset $F$ of the Riemann sphere can be separated by a simple closed curve. Apply this to our $F$, to the component $F_1$ of $F$ containing $\partial B$ and to $F_2=\{u_*\}$.
\end{proof}

A counter-example if $K$ is not compact is given by $K=O \cup (\setof{1/n}{n\geq 1}\times[0,1]) \subset \R^2$ where $O=(0,0)$ is the origin.

\medskip

\begin{corollary}\label{cor:sep}
  Let $V$ a connected open subset of the plane and $x\in \partial V$.
  Assume that there exists $\eps>0$ such that the connected component containing $x$ of $\ov B(x,\eps)\cap \partial V$ is a single point. 
  Then there exists a simple closed curve $\gamma$ contained in $V$ and separating $x$ from $\partial B(x,\eps)$.
\end{corollary}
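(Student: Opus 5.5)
We plan to reduce \Cref{cor:sep} to \Cref{lem:sep}, applied to the compact set $K:=\ov B(x,\eps)\cap \partial V$, and then push the resulting curve into $V$. By hypothesis, $\{x\}$ is a connected component of $K$. So for any $r\in(0,\eps)$, \Cref{lem:sep} gives a simple closed curve $\gamma_0\subset B(x,r)$, disjoint from $K$ and separating $x$ from $\partial B(x,r)$. Since $\gamma_0\subset B(x,\eps)$, it is also disjoint from $\partial V$. Hence $\gamma_0$ lies in the union of the two disjoint open sets $V$ and $\C-\ov V$. Being connected, $\gamma_0$ is contained in one of them.

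The main obstacle is to exclude the case $\gamma_0\subset \C-\ov V$. Here we would use that $V$ is connected and that $x\in\partial V$. Let $D$ be the bounded component of $\C-\gamma_0$; it contains $x$ and lies in $B(x,r)$. Since $x\in\partial V$, $V$ meets $D$. On the other hand, $V$ is not contained in $D$, at least after shrinking $r$. Indeed, $V$ is a nonempty open set, and if $V\subset D$ for arbitrarily small $r$ then $V\subset\{x\}$, which is absurd. Alternatively, if $V$ has points outside $B(x,\eps)$, then any small $r$ works directly. Thus $V$ is a connected set meeting both $D$ and $\C-\ov D$. It must therefore meet $\partial D=\gamma_0$, which contradicts $\gamma_0\cap V=\emptyset$. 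Hence $\gamma_0\subset V$.

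It remains to check separation from $\partial B(x,\eps)$. The curve $\gamma_0$ lies in $B(x,r)$ with $r<\eps$, and $D$ is contained in the disc bounded by it. So $\partial B(x,\eps)$, being connected and disjoint from $\ov D$, lies in the unbounded component of $\C-\gamma_0$, while $x\in D$. Taking $\gamma=\gamma_0$ concludes.

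The only genuinely delicate point is the topological input of \Cref{lem:sep}, which is taken as given. Note also that in \Cref{lem:sep} the roles of $x$, $z$ and $u_*$ should all be read as the same point $x$, and that ``disjoint from $X$'' means disjoint from $K$.
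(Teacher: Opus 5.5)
Your proof is correct and follows essentially the same argument as the paper. You apply \Cref{lem:sep} to $K=\ov B(x,\eps)\cap\partial V$, with $r$ small enough that $V$ has a point outside $B(x,r)$; the paper takes $r=|x'-x|$ for some $x'\in V\cap B(x,\eps)$, which is exactly your ``shrink $r$'' step. Connectedness of $V$ together with $x\in\partial V$ then forces $\gamma$ to meet $V$, and since $\gamma$ avoids $\partial V$, it lies in $V$.
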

\begin{proof}
Since $x\in\partial V$, there exists $x'\in V\cap B(x,\eps)$.
Let $\gamma$ be given by \Cref{lem:sep} applied to $r=|x'|$.
Let us check that $\gamma\subset V$.
Since $x\in \partial V$ and $V$ is open and connected, we can join by a curve $\delta\subset V$ the point $x'$ to a point in the bounded component of the complement of $\gamma$.
So $\gamma$ and $\delta$ intersect, thus $\gamma$ has at least one point in $V$ and since it does not intersect $\partial V$, it is contained in $V$.
\end{proof}

\subsection{Lebesgue measure 0 for the Julia sets of some quasiconformal models}\label{app:mes0}

Let $\theta$ be a bounded type irrational and $f$ be a polynomial of degree $d\geq 2$ fixing $0$ with multiplier $e^{2\pi i\theta}$.
We assume as in \Cref{lem:rigid} that every critical point of $f$ is either periodic or on $\partial \Delta(f)$.
Our proof of \Cref{lem:rigid} needs the following claim:
\begin{lemma}\label{lem:zeromes}
 $\on{Leb} J(f)=0$.
\end{lemma}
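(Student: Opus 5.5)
We plan to prove $\on{Leb} J(f)=0$ by combining Shishikura's quasiconformal surgery model (Blaschke product model) with a standard Lebesgue density argument. First, we use that every critical point is either periodic (hence superattracting, its orbit in the Fatou set) or on $\partial\Delta(f)$. So the postcritical set of $f$ in $J(f)$ is contained in the forward orbit closure of $\partial\Delta(f)$, i.e.\ in $\partial\Delta(f)$ itself, which by \Cref{thm:shi} is a $K$-quasicircle. Away from the superattracting cycles, $\mathcal{P}(f)\cap J(f)=\partial\Delta(f)$.

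Suppose by contradiction that $J(f)$ has positive measure, and let $z$ be a Lebesgue density point of $J(f)$ whose orbit does not eventually land on $\partial\Delta$; the preimages of $\partial\Delta$ form a countable union of quasicircles, hence a null set. We split into two cases according to the behaviour of the orbit $f^n(z)$.

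\emph{Case 1: the orbit does not accumulate on $\partial\Delta$.} Then it stays at definite distance $\delta$ from the postcritical set in $J(f)$ (and, being in $J$, also from the superattracting cycles). Hence along a subsequence the univalent pullbacks of disks $B(f^n(z),\delta)$ have bounded distortion by Koebe. Pulling back the definite proportion of Fatou set (say, preimages of $\Delta$ or of the basins, which are dense at every scale near $f^n(z)$ since such a disk meets the Fatou set in a set of definite area, by compactness and continuity) yields a definite proportion of Fatou set near $z$ at arbitrarily small scales. This contradicts density.

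\emph{Case 2: the orbit accumulates on $\partial\Delta$.} This is the main obstacle, and we would handle it via Shishikura's model. We conjugate $f$ quasiconformally near $\ov\Delta$ to a Blaschke product $B$ with a critical point on $\partial\D$, for which $\partial\D$ plays the role of $\partial\Delta$. Near $\partial\Delta$ the outside dynamics then has bounded geometry, because $\theta$ has bounded type (the Petersen/Świątek real a priori bounds for critical circle maps). Concretely, when $f^n(z)$ comes close to a point of $\partial\Delta$ at distance $r$, we use the bounded type combinatorics to find a first-return-type iterate and a domain of size comparable to $r$ around $f^n(z)$ that maps with bounded degree (at most $d$ critical points involved, each passed a bounded number of times) onto a definite-size neighbourhood. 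Inside that domain a definite proportion is occupied by $\Delta$ itself or one of its preimages, since the quasicircle $\partial\Delta$ is porous on both sides, with inner and outer components of comparable size. Pulling back by the bounded-degree map, whose distortion is controlled by the Koebe theorem for bounded degree branched maps, we again get definite Fatou proportion around $z$ at arbitrarily small scales, contradicting density. The hard part is the uniform control of the degree and of the moduli of the annuli in this pullback. For that step we would rely on the a priori bounds for the Blaschke model and transfer them via the quasiconformal conjugacy, which distorts ratios only boundedly. This mirrors McMullen's porosity argument in \cite{Mu}.
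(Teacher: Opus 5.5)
Your Case 1 is essentially the paper's first case and is fine. You should add that $|(f^n)'(z)|\to\infty$ along the subsequence (otherwise $z$ would be in the Fatou set), so that the pulled-back disks really shrink.

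The gap is in Case 2. What you produce there is Fatou area at scale $r$ around $f^n(z)$. That part needs no first-return map: $\Delta$ is a quasidisk, so $B(f^n(z),2r)$ already contains a ball of radius $\asymp r$ inside $\Delta$. The real issue is transporting this back to $z$ through $f^{-n}$, and your sentence ``we again get definite Fatou proportion around $z$'' skips it. The bounded-degree map you describe lives near $f^n(z)$, not near $z$. Any neighbourhood of $f^n(z)$ that reaches into $\Delta$ must cross $\partial\Delta$, which contains the whole postcritical set lying in $J(f)$. If the orbit of $z$ stays near $\partial\Delta$ for many iterates, the successive pull-backs keep containing preimages of the critical points on $\partial\Delta$. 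So neither the degree nor the distortion of $f^n$ on the pulled-back component is controlled. Your parenthetical ``at most $d$ critical points, each passed a bounded number of times'' is exactly what would have to be proved, and it is not available for such neighbourhoods.

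The missing idea, which is the core of the paper's argument in the model $F$, is to work only on the exterior side.
\begin{itemize}
\item No postcritical point of $F$ lies in $\{1<|w|<1+\eps_0\}$. Let $z_{n_0}=F^{n_0}(z)$ and $\eps=|z_{n_0}|-1$. Then a simply connected region outside $\ov\D$ of size $\asymp\eps$ around $z_{n_0}$ carries a univalent branch of $F^{-n_0}$ sending $z_{n_0}$ to $z$, with Koebe distortion bounds.
\item What must then be found is a ball \emph{outside} $\ov\D$, disjoint from $\hat J$, whose radius and whose distances to $z_{n_0}$ and to $\partial\D$ are all $\asymp\eps$. Porosity of the quasicircle does not give this, since the exterior balls it provides need not meet the Fatou set.
\item The paper gets this ball dynamically. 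Take the arc of $\partial\D$ of length $4C\eps$ centred at $z_{n_0}/|z_{n_0}|$, and push it forward until the first time $n_1$ it contains a critical point $c\in\partial\D$.
\item Uniform quasisymmetry of the iterates $F^n|_{\partial\D}$ (this is where bounded type enters) shows that the image of the middle half-arc has bounded hyperbolic diameter in a slit annular sector. On that sector the inverse branch $h$ of $F^{n_1}$ is univalent.
\item Finally, pull back by $h$ a ball lying in the cone outside $\D$ at $c$ that is contained in $F^{-1}(\D)$.
\end{itemize}
Without this exterior construction, your Case 2 does not reach $z$.
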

We consider this result as known, but for completeness we provide a proof here.

\medskip

The proof is based on the existence, shown by Shishikura, for any $f\in\cal P[\theta]$, of a \emph{model} map, in the vein of Douady, Ghys, Herman and Świątek, see \cite{Shishikura,Zakeri-entire,ZhangGaofei} (there are no critical point eventually falling in the Siegel disk so we are in one of the cases where these references provide a model).
More precisely there exists a rational map $F$ such that:
\begin{itemize}
  \item $F$ has degree $2d-1$, 
  \item $F(\partial \D)\subset\partial \D$ ($F$ is a Blaschke fraction),
  \item $\infty$ is a local degree $d$ fixed point of $F$,
  \item $F|_{\partial \D}$ is quasisymmetrically conjugated to the rotation of angle $2\pi\theta$,
  \item there is at least one critical point of $F$ on $\partial \D$,
\end{itemize}
and there exists a degree $d$ quasiregular map $\hat F:\hat\C\to\hat\C$ such that:
\begin{itemize}
  \item $\hat F$ is equal to $F$ on $\C-\D$ and quasiconformally conjugated on $\D$ to the rotation of angle $2\pi\theta$,
  \item $\hat F$ is conjugated to $f$ via a quasiconformal map $\phi$, whose Beltrami differential is supported on the set of points eventually mapped to $\D$ by $\hat F$.
\end{itemize}
Note that, necessarily $\hat F^{-1}(\{\infty\}) = \{\infty\}$.

We denote \[\hat J = \phi^{-1}(J(f)).\]
By properties of quasiconformal maps, \Cref{lem:zeromes} is equivalent to
\begin{equation}\label{eq:zm}
\on{Leb}(\hat J)=0.
\end{equation}
By a theorem of Fatou, the boundary of the Siegel disk of a polynomial (more generally, of a rational map) is contained in the closure of the critical orbits.
It follows that $f$ can have only one Siegel disk.
By the classification of Fatou components, $J(f)$ consists in the points whose $f$-orbit eventually falls in $\Delta(f)$ or in the immediate basin of a critical cycle.
Note that for $z\in\hat J$, its $\hat F$-orbit stays out of $\D$, so coincides with its $F$-orbit.
Hence $\hat J$ is also the set of points of $\hat\C-\D$ whose $F$-orbit never enters $\D$ nor tends to a periodic critical orbit.

We take the usual approach of proving that $\hat J$ has no Lebesgue density point, more precisely that the following weak form of porosity holds: $\forall z\in\hat J$, $\exists C>0$ such that $\forall \eps>0$, there exits $0<r<\eps$ and a disk $B(z',r)$ disjoint from $\hat J$ with $|z'-z|\leq Cr$.

So consider $z\in\hat J$.
Note that $\hat J\subset J(F)$.
Almost every $z\in J(F)$ tends to the post-critical set of $F$ (this is a result of Lyubich, see \cite{Ly}; see also \cite{Mu2}). A simple way to justify this is to prove that if the orbit of $z$ does not tend to the post-critical set then $J(F)$ is weakly porous at $z$: take any closed disk not intersecting $J$; there is a simply connected domain $U$ containing this disk and $F^n(z)$ for infinitely many $n$; let $V\subset U$ be open connected and have bounded hyperbolic diameter for $U$. An inverse branch $g_n$ of $F^n$ on $U$, mapping $F^n(z)$ back to $z$ exists and has bounded distorsion on $V$. If $(F^n)'(z)$ did not tend to $\infty$ for those $n$, then $z$ would belong to the Fatou set. The image of $g_n(V)$ then contains disks arbitrarily close to $z$ and satisfying the definition of weakly porous.

If the orbit of $z\in \hat J$ tends to the post-critical set, then it tends to $\partial \D$ since the other critical points are assumed periodic.
There exists $\eps_0>0$ independent of $z$ such that no critical orbit intersects $1<\lvert z\rvert<1+\eps_0$.
Consider a point $z_{n_0}$ of the orbit that is at distance $\eps:=|z_{n_0}|-1<\eps_0$ from $\partial \D$.
Let $w_0 = z_{n_0}/\lvert z_{n_0}\rvert$ and $\alpha_0 = \arg( w_0)$.
Consider the following two arcs of $\partial \D$: $I=\exp(i I')$, $J=\exp(iJ')$ with $I' = [\alpha_0-C\eps,\alpha_0+C\eps]$ and $J' = [\alpha_0-2C\eps,\alpha_0+2C\eps]$
\and where $C>1$ is a constant independent of $z$ that we have to choose big enough initially, see below how.
Once $C$ is chosen, we assume that $\eps$ is small enough so that $\forall n\geq 0$, the circular arc $F^n(J)$ has length $<\pi$ (this is possible since $F$ is conjugated to a rotation on $\partial \D$).
Consider now the smallest $n_1\geq 0$ such that $F^{n_1}(J)$ contains a critical point $c$ of $F$.
Let $I_n:=F^n(I)$ and $J_n:= F^n(J)$.
Denote $\exp(i[a_n,b_n])=J_n$ and $\ell = \log(1+\eps_0)$.
Let $R$ be the rectangle defined by $|\Im(z)|<\ell$ and $|\Re(z)-\frac{a_{n_1}+b_{n_1}}2|<2|a_{n_1}-b_{n_1}|$.
Let $R' = R - ((-\infty,a_{n_1}]\cup[b_{n_1},+\infty))$, \ie we slit the rectangle along two half-lines in $\R$ so as to leave $(a_{n_1},b_{n_1})$ in it.
Finally let $A' = \exp{iR'}$: this is a simply connected subset of $\C$ if $2|a_{n_1}-b_{n_1}|\leq 2\pi$, which we can impose by asking that $\eps$ is small enough (depending on $F$ and $C$).
The set $A'$ is contained in $A$, so the only post-critical points of $F$ it may contain are contained in $J_n$.
Note also that, by definition of $n_1$, the restriction $F^{n_1}:J\to J_{n_1}$ has no critical point ($n_1$ equals $0$ when $J$ already contains a critical point).
There is thus an inverse branch $h$ of $F^{n_1}$, defined on $A'$ and mapping $J_{n_1}$ back to $J$.
The map $F$ being quasisymmetrically conjugate to a rotation on $\partial \D$, the family of iterates $F^n$ is uniformly quasisymmetric on $\R$.
It follows that the ratio of the lengths of the three sub-intervals that $\partial I_n$ cuts $J_n$ into is bounded.
In particular the diameter of $I_{n_1}$ for the hyperbolic metric of $A'$ is $\leq M$ for some $M$ that only depends on $F$.
Consider the set $V$ of points in $A'$ at hyperbolic distance $< 1$ from $I_{n_1}$. 
Recall $I=h(I_{n_1})$ is an arc of circle of length $2C\eps$ and that $z_{n_0}$ lies above the middle of $I$, at Euclidean distance $\eps$.
By bounded distortion estimates, $h(V)$ contains the set of points at Euclidean distance from $I$ less than $2C\eps/M'$ for some constant that only depends on $M$, i.e.\ only on $F$.
So if $C$ has initially been chosen big enough, then $h(V)$ contains $z_{n_0}$.
So $z_0\in h(A')$ and the hyperbolic distance in $A'$ from $z_{n_0}$ to $I$ is $<1$.
Near $c$ there is a cone outside $\D$ contained in the preimage of $\D$ by $F$.
This cone contains a ball of definite hyperbolic size in $B'$ situated at a definite hyperbolic distance from $I_n$.
Its image by $h$ contains a Euclidean ball of size and distance to $z_{n_0}$ comparable to the size of $I$, \ie to $\eps$.
Imposing that $\eps$ is small enough, this ball is in the annulus $A$, which we recall contains no critical point. We can thus pull back with bounded distortion this situation to a ball near $z$, whose diameter is comparable to its distance to $z$.
There only remains to check that these distances can be made arbitrarily close to $z$.
Otherwise there would be an infinite subset of $n\in\N$ and a neighborhood of $z$ such that $F^n$ is bounded on this neighborhood, contradicting that $z$ is in the Julia set.

\subsection{Homeomorphisms close to the identity}\label{app:close}

\begin{lemma}\label{lem:close}
  Let $A\subset\C$ be an annulus bounded by the images of two Jordan curves (oriented as boundary curves) $\gamma_1,\gamma_2:\R/\Z\to\C$.
  For every $\eps>0$ there exists $\eta>0$ such that for all Jordan curves $\delta_1,\delta_2:\R/\Z\to\C$ satisfying $\forall i\in\{1,2\}$, $\|\delta_i-\gamma_i\|_\infty<\eta$, then there is a homeomorphism $\phi$ from $A$ to the annulus $B$ bounded by $\delta_1$ and $\delta_2$ such that
  \begin{itemize}
      \item $\phi$ extends $\delta_1\circ\gamma_{1}^{-1}$ and $\delta_2\circ\gamma_{2}^{-1}$,
      \item $\|\phi-\on{id}\|<\eps$.
  \end{itemize}
  If moreover the images of $\gamma_1$, $\gamma_2$, $\delta_1$ and $\delta_2$ are quasicircles then $\phi$ can be chosen quasiconformal.
\end{lemma}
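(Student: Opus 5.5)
The plan is to build $\phi$ in two stages: first for round annuli, then for the general case by conjugating with a fixed uniformization of $A$. Throughout, the curves $\delta_i$ are treated as small perturbations of the fixed curves $\gamma_i$.

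\emph{Reduction.} Fix once and for all a homeomorphism $\psi$ from the closed round annulus $\ov{\mathbb A}=\{1\leq|w|\leq 2\}$ onto $\ov A$. By Carathéodory--Torhorst applied to a Riemann map of $A$ onto a round annulus, we may take $\psi$ conformal inside and extending homeomorphically to the boundary. We may also arrange that $\psi(e^{2\pi i t})=\gamma_1(t)$ and $\psi(2e^{2\pi i t})=\gamma_2(t)$; this is done by postcomposing the boundary values with circle homeomorphisms and interpolating radially. If the $\gamma_i$ are quasicircles, $\psi$ can be chosen quasiconformal: take a conformal map to a round annulus, note that its boundary extensions are quasisymmetric, and correct them by quasisymmetric circle maps interpolated radially, which gives a quasiconformal map of the round annulus. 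Since $\psi$ is uniformly continuous on the compact $\ov{\mathbb A}$, it is enough to produce the perturbation in the model coordinates.

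\emph{Construction.} The natural candidate
\[
\phi = \tilde\psi\circ\psi^{-1}, \qquad \tilde\psi(re^{2\pi i t}) = (2-r)\,\delta_1(t)+(r-1)\,\delta_2(t)\ \text{(affine in } r\text{)},
\]
need not be injective. I will instead use a collar construction.
\begin{enumerate}
\item Choose a thin closed collar $N_i\subset\ov A$ around each $\gamma_i$, given by $\psi(\{1\leq r\leq 1+\tau\})$ and $\psi(\{2-\tau\leq r\leq 2\})$.
\item For $\eta$ small, the curve $\delta_i$ lies in a small neighborhood of $\gamma_i$. By the Schoenflies theorem, the region between $\delta_i$ and the fixed inner curve $\psi(\{|w|=1+\tau\})$ is a closed annulus (using that $\delta_i$ is close to $\gamma_i$ and $A$ has positive modulus).
\item Build $\phi$ on $N_i$ as a homeomorphism onto this region which equals $\delta_i\circ\gamma_i^{-1}$ on the outer boundary and the identity on the inner boundary. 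Set $\phi=\mathrm{id}$ on the middle part of $A$.
\item Smallness of $\|\phi-\mathrm{id}\|$ on $N_i$ requires choosing the interpolation between the two boundary curves carefully.
\end{enumerate}

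\emph{Main obstacle.} Step 4 is the genuinely delicate point: making a Schoenflies-type extension close to the identity in sup norm. This is a controlled Schoenflies theorem, and I would take it from the literature on the local contractibility of the homeomorphism group of the plane, by Černavskii and by Edwards--Kirby. Concretely, I would map the thin annulus by a conformal map, use uniform convergence of Carathéodory boundary extensions as $\delta_i\to\gamma_i$ (as in \Cref{lem:Delta:cont}, via equicontinuity), and interpolate radially in conformal coordinates. Radial interpolation between two boundary circle maps that are close to each other moves each point only slightly, and it is a homeomorphism once both boundary maps are orientation-preserving circle homeomorphisms.

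In the quasicircle case, the conformal maps extend quasiconformally across the quasicircles. The boundary correspondences $\delta_i\circ\gamma_i^{-1}$, read in conformal coordinates, are then quasisymmetric, provided the $\delta_i$ are quasisymmetric parametrizations; this holds in the applications, where the curves are given as $\phi_P$-images. With quasisymmetric boundary data, the radial interpolation (or a Beurling--Ahlfors extension) is quasiconformal, so $\phi$ is quasiconformal.
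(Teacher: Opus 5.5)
\textbf{Topological part.} Once the collars are stripped away, your topological argument is the paper's own route. You uniformize a varying annulus by a round one, use uniform convergence of the boundary extensions of the uniformizing maps, and interpolate radially in $\log$-coordinates. The collars buy nothing, since each collar is still a varying annulus and needs the same two inputs:
\begin{itemize}
\item \emph{Uniform convergence.} For arbitrary Jordan curves $\delta_i$ this does not come from the equicontinuity argument of \Cref{lem:Delta:cont}. That argument rests on a uniform quasicircle constant, which the $\delta_i$ need not have. What you need is Rad\'o's theorem (the uniform Carath\'eodory--Osgood--Taylor theorem the paper takes from \cite{Po}, Theorem~2.11), transferred to doubly connected domains as the paper does.
\item \emph{Moduli.} The modulus changes with $\delta_i$. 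You must show the moduli converge and precompose with a radial stretch close to the identity before interpolating.
\end{itemize}
The \v{C}ernavski\u{\i}/Edwards--Kirby results are not the controlled Schoenflies statement you need; your concrete conformal-coordinate argument is what actually does the work.

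\textbf{Quasiconformal part: the genuine gap.} You claim that radial interpolation between quasisymmetric boundary maps is quasiconformal. This is false in general. Write the round annulus in $\log$-coordinates $(s,t)$ with $0\le s\le h$, and let the boundary maps be $f_0,f_1$. The interpolation $(s,t)\mapsto\bigl(s,(1-s/h)f_0(t)+(s/h)f_1(t)\bigr)$ has $s$-derivative $\bigl(1,(f_1-f_0)/h\bigr)$ and $t$-derivative tending to $(0,f_0'(t))$ as $s\to0$. Its dilatation is therefore bounded only when $f_0,f_1$ are bi-Lipschitz, whereas quasisymmetric maps can even be singular. The paper points out exactly this pitfall.

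Its remedy is an \emph{equivariant} Beurling--Ahlfors extension:
\begin{itemize}
\item Lift the boundary correspondence through the universal covering $\Hp\to\{1<|z|<R\}$. Its deck group is generated by $z\mapsto\lambda z$, and the two boundary circles correspond to the two half-lines of $\R-\{0\}$.
\item The lift $\tilde\phi:\R\to\R$ commutes with $z\mapsto\lambda z$. Since the Beurling--Ahlfors extension is natural under affine maps, the extension still commutes with it and descends to the annulus.
\item The extension is quasiconformal because $\tilde\phi$ is quasisymmetric.
\item Its integral formula shows it tends to the identity as $\tilde\phi$ does, which gives the sup-norm control.
\end{itemize}
Your parenthetical ``or a Beurling--Ahlfors extension'' points the right way but omits exactly these points: there are two boundary components, and sup-norm control is needed. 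The same false interpolation claim also underlies your quasiconformal choice of $\psi$.

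Your observation that quasiconformality forces $\delta_i\circ\gamma_i^{-1}$ to be quasisymmetric is correct, and the statement as written does not assume this. The paper tacitly assumes it (``if the self-homeomorphisms are quasiconformal''), and it does hold in the application in \Cref{lem:rigid}.
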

\begin{proof}
  By a version of Carathéodory's convergence theorem (Theorem~1 in \cite{He}), the modulus of $B$ tends to the modulus of $A$.
  There is a uniform version of the theorem of Carathéodory, Osgood and Taylor on the boundary continuity of the conformal maps of Jordan domains: see \cite{Po}, Theorem~2.11 page~26.
  It implies\footnote{It is stated for simply connected domains. To reduce to this case, unroll the annuli with a logarithm, make it bounded with an inversion, then apply a square root to detach the tips.} that there is a conformal representation $\phi_B$ from a round annulus $C(B):1<|z|<R(B)$ to $B$ with a continuous extension to boundaries, and that tends, as $\eta$ tends to $0$, to $\phi_A$ in the following sense: $\forall \eps'>0$ there exists $\eta'>0$ and $\eta''>0$ such that $\forall z,w$ with $1\leq |z|\leq R(A)$, $1\leq |w|\leq R(B)$, $|z-w|<\eta''$ and provided $B$ is close to $A$, i.e.\ $\eta<\eta'$, then $|\phi_B(w)-\phi_A(z)|<\eps'$.
  We can compensate the difference of moduli by composing with a quasiconformal map close to identity.
  The problem then amounts to extending two self-homeomorphisms of the boundary of $C$, close to the identity, into a self-homemorphism of $C$ close to the identity.
  This can be done for instance in $\log$-coordinate with a linear interpolation on every radial segment.
  If the self-homeomorphisms are quasiconformal, this
  does not necessarily give a quasiconformal map.
  Instead, lift $\phi$ by a universal covering $\Hp \to C$ into $\tilde\phi :\partial\Hp\to\partial\Hp$ commuting with $s:z\mapsto \lambda z$, $\lambda>0$ depending on $R$, and where $\Hp$ is the upper half plane, and perform an Ahlfors-Beurling extension (\cite{AhBe}, page~135, Equation~(14), with parameter $r=1$) $\hat\phi(x+iy) = \frac{1}{2}\int_{x-y}^{x+y} \tilde\phi(t)dt + i \frac{1}{2}\left(\int_{x-y}^{x} \tilde\phi(t)dt-\int_{x}^{x+y} \tilde\phi(t)dt\right)$ of $\tilde\phi$ on $\Hp$: the integral formula gives a map that still commutes with $s$ and tends to the identity as $\phi$ tends to the identity.
\end{proof}

\begin{lemma}\label{lem:phi0phi1}
Let $P_a$ be a polynomial of degree $\geq 2$.
For every $\eta>0$, there exists $\eta'>0$ such that for all polynomial $P_b$, for all homeomorphism $\phi_0$ of $\hat\C$, if
\begin{itemize}
  \item $\|P_b-P_a\| <\eta'$,
  \item $\|\phi_0-\on{id}\| <\eta'$,
  \item $\phi_0$ sends the set of critical values of $P_a$ to the set of critical values of $P_b$
\end{itemize}
then there exists a homeomorphism $\phi_1$ of $\hat\C$ such that
$\phi_0 \circ P_a = P_b \circ \phi_1$ and $\|\phi_1-\on{id}\|<\eta$.
\end{lemma}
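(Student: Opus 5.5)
The plan is to build $\phi_1$ as a lift of $\phi_0$ through the branched covers $P_a$ and $P_b$, and to get closeness to the identity by choosing the right lift. First I will replace $\phi_0$ by a homeomorphism $\psi$, close to the identity, such that
\[ \psi\circ P_a = P_b\circ\psi \quad\text{on } \on{crit}(P_a),\]
which in turn gives $\psi(\on{cv}(P_a))=\on{cv}(P_b)$ with the correct local degrees. The lift is then obtained by path lifting.

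\textbf{Step 1: a reference lift $\psi$.} For $P_b$ close to $P_a$, the critical points of $P_b$ are close to those of $P_a$. The catch is that critical points may split when $P_b$ is perturbed. Here, however, $\phi_0$ maps $\on{cv}(P_a)$ bijectively onto $\on{cv}(P_b)$, so the number of distinct critical values is preserved. I still need the local degrees to match, and I will check this by counting: the Riemann–Hurwitz count $\sum(\deg-1)=d-1$ is fixed. Nearby critical points of $P_b$ cluster around each critical point of $P_a$. Since the critical values are as numerous as before, the cluster near a critical point $c$ of $P_a$ must map to a single critical value near $P_a(c)$. A local model argument ($P_b$ near $c$ is a small perturbation of $z\mapsto z^m$) should then force the cluster to be a single point of the same multiplicity.

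I regard this step as the main obstacle. If the merging argument fails, I will fall back on the weaker use of the hypothesis: all critical values of $P_b$ in the small disk around $P_a(c)$ equal a single point $\phi_0(P_a(c))$. I will then construct the lift directly on the complement of small disks and handle the disks separately.

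\textbf{Step 2: the lift away from the critical values.} Let $D$ be a union of small disjoint disks around the points of $\on{cv}(P_a)$. Outside $D\cup P_a^{-1}$-related sets, both $P_a\colon \C-P_a^{-1}(\ov D)\to \C-\ov D$ and $P_b\colon \C-P_b^{-1}(\phi_0(\ov D))\to\C-\phi_0(\ov D)$ are unbranched degree-$d$ coverings. Since $\phi_0$ is close to the identity, it is isotopic to the identity through homeomorphisms close to the identity; this can be arranged by Alexander-type trick or straight-line isotopy after a small adjustment, using that $\phi_0$ respects the finite set $\on{cv}$. Consequently $\phi_0\circ P_a$ and $P_b$ induce the same subgroup on fundamental groups, and a lift $\phi_1$ with $P_b\circ\phi_1=\phi_0\circ P_a$ exists. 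I will choose the lift sending a basepoint $z_0$ to the preimage of $\phi_0(P_a(z_0))$ under $P_b$ closest to $z_0$. Uniform local injectivity of $P_a$ away from critical points then shows that $\phi_1$ stays $\eta$-close to the identity: along any path the lift cannot jump to another sheet, because the sheets are separated by a definite distance.

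\textbf{Step 3: inside the disks and at $\infty$.} Over each small disk around a critical value $v$, $P_a^{-1}$ consists of disks on which $P_a$ is conjugate to $z\mapsto z^m$, and the same holds for $P_b$ over $\phi_0$ of that disk, with matching $m$ by Step 1. I will extend $\phi_1$ across each component as the lift of $\phi_0$ through these power maps, sending the critical point of $P_a$ to the corresponding critical point of $P_b$; the lift is continuous at the center. Since all these disks are small, $\phi_1$ is close to the identity there automatically. Near $\infty$, $P_a$ and $P_b$ have Böttcher-type coordinates depending continuously on the coefficients, so the lift extends to $\infty$ with $\phi_1(\infty)=\infty$. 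Closeness in the spherical metric gives $\|\phi_1-\on{id}\|<\eta$. Finally, $\phi_1$ is a bijection because it is a lift of a homeomorphism between degree-$d$ branched covers with matching branching data.
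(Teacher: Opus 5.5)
Your architecture is the same as the paper's: small disks $U_v$ around the critical values, a ``nearest sheet'' lift over their complement, and the normal form $z\mapsto z^m$ on the components $W_c$ of $P_a^{-1}(U_v)$. However, the argument that is supposed to produce the lift in Step~2 does not work. That $\phi_0$ is isotopic to the identity says nothing about $P_b$, so it cannot imply that $\phi_0\circ P_a$ and $P_b$ induce compatible subgroups of $\pi_1$. Take $\phi_0=\on{id}$ and for $P_a,P_b$ two Shabat polynomials with the same passport whose plane trees are not isomorphic: every hypothesis except proximity holds, your isotopy premise holds trivially, and no $\phi_1$ exists. (Also, an isotopy ``respecting $\on{cv}$'' is meaningless here, since $\phi_0$ moves $\on{cv}(P_a)$ onto $\on{cv}(P_b)$.) Existence must come from the proximity of $P_b$ to $P_a$, which you only invoke afterwards to bound $\|\phi_1-\on{id}\|$. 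The paper uses proximity directly and needs no fundamental groups. For $z$ in the compact set $C=\hat\C-\bigcup_v U_v$ (with $\infty$ counted among the critical values), the $d$ roots of $P_a(w)=z$ are mutually at distance $\geq d_a>0$. Rouch\'e's theorem, comparing $w\mapsto P_b(w)-\phi_0(z)$ with $w\mapsto P_a(w)-z$ on small disks around these roots, gives exactly one root of $P_b(w)=\phi_0(z)$ near each of them. Defining $\phi_1(w)$ as the root near $w$ yields a continuous map on $P_a^{-1}(C)$ with $P_b\circ\phi_1=\phi_0\circ P_a$, as close to the identity as you wish once $\eta'$ is small. This nearest-root correspondence is precisely the covering isomorphism your lifting criterion needed.

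Step~1 is left as a hedge, and your fallback is not an option. A homeomorphism $\phi_1$ with $P_b\circ\phi_1=\phi_0\circ P_a$ preserves local degrees, so if the critical points of $P_b$ near $c$ did not merge, the lemma would be false. The missing argument is short and is the paper's ``since $U_v$ is simply connected'' step. Let $W'_c$ be the component of $P_b^{-1}(\phi_0(U_v))$ close to $W_c$. Then $P_b:W'_c\to\phi_0(U_v)$ is proper of degree $m$ (the local degree of $P_a$ at $c$), and by the hypothesis $\phi_0(\on{cv}(P_a))=\on{cv}(P_b)$ together with proximity, it is branched only over $\phi_0(v)$. Removing the finite fibre over $\phi_0(v)$ leaves a connected $m$-sheeted covering of a punctured disk. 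Such a covering is equivalent to $z\mapsto z^m$, so it has a single end over the puncture, and hence that fibre is a single critical point $c'$ of local degree $m$. With this, Step~3 works: extend the lift from $\partial W_c$ through the two punctured-disk coverings and set $\phi_1(c)=c'$; the smallness of $W_c$ and $W'_c$ gives closeness to the identity. Finally, the map $\psi$ of Step~1 is never used and cannot replace the given $\phi_0$. Also, $\infty$ needs no B\"ottcher coordinates: the lemma requires $\phi_0(\infty)=\infty$, after which $\infty$ is treated like any other critical value, of local degree $d$.
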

\begin{proof}
Separate the critical values $v$ of $P_a$ by Jordan domains $U_v$ independent of $P_b$.
The connected components of the preimage of the union of the $U_v$ are Jordan domains $W_c$ containing the critical and co-critical points $c$ of $P_a$, each containing a unique one. 
The restriction $P_a:\ov W_c\to \ov U_v$, with $v=P_a(c)$, is equivalent to $z\in\ov\D\mapsto z^{p_c}\in\ov\D$ for some $p_c\geq 1$.
In other words, there exists holomorphic maps $t_{a,c}:\ov W_c\to\ov \D$ and $s_{v}:\ov U_v\to\ov \D$ such that $(z\mapsto z^{p_c})\circ t_{a,c} = s_{v}\circ P_a$.
The map $\phi_0^{-1}\circ P_b$ is close to $P_a$ and shares the same critical values.
For $P_b$ close to $P_a$, there is still a unique critical value $v'\in U_v$.
There is a deformation $W'_c$ of $W_c$ such that $\phi_0^{-1}\circ P_b(W'_c) = U_v$ (note that we took the same range $U_v$ as for $P_a$, and that we index it with $v$, not $v'$).
Since $U_v$ is simply connected, this implies that the restriction $P_b: \ov W'_c\to \ov U_v$ is still equivalent to $z\in\ov \D\mapsto z^{p_c}\in\ov \D$, via a map $t_{b,c}:\ov W'_c\to\ov \D$, close to $t_{b,c}$ and such that $(z\mapsto z^{p_c})\circ t_{b,c} = s_{v}\circ P_b$ (we can take the same $s_v$ for $P_a$ and $P_b$ because they have the same critical value $v$).
Note that if $c\in W_c$ is a critical point of $P_a$, then the critical point $c'$ of $P_b$ in $W'_c$ is close to $c$, and that the set $W'_c$ is indexed by $c$, not $c'$.
Let $C = \hat\C - \bigcup_v U_v$. 
As $z$ varies in the compact set $C$, the fiber $P_a^{-1}(z)$ consists in $d$ point that vary continuously with $z$. Their mutual spherical distance remains bigger than some constant $d_a>0$.
For $P_b$ is close enough to $P_a$, $\forall z\in C$, the fiber $(\phi_0^{-1}\circ P_b)^{-1}(z)$ is at distance $<d_a/10$ from the fiber $P_a^{-1}(z)$.
For $w\in P_a^{-1}(z)$, we define $\phi_1(w)$ as the unique point in $(\phi_0^{-1}\circ P_b)^{-1}(z)$ at distance $<d_a/10$ of $w$.
For $\ov W_c$, we choose $\phi_1 = t_{b,c}^{-1} \circ t_{a,c}$, whose distance to identity is arbitrarily small provided $P_b$ is close to $a$.
If $P_b$ is close enough to $P_a$ then this distance is $<d_a/10$ from the identity, in particular it coincides with the values of $\phi_1$ already defined on $\partial W_c$.
We get a continuous extension of $\phi_1$ on $\hat\C$.
Moreover for all $\eps>0$ (unrelated to $d_a$), we can ensure that the distance from $\|\phi_1-\on{id}\|_{\infty}<\eps$ for the spherical distance, provided $P_b$ is close enough to $P_a$.
From the definition, it follows that the desired relation $\phi_0 \circ P_a = P_b \circ \phi_1$ holds.
Let us prove that $\phi_1$ is injective: if $\phi_1(z) = \phi_1(z')$ then by $\phi_0 \circ P_a = P_b \circ \phi_1$ and injectivity of $\phi_0$, $z$ and $z'$ are in the same fiber of $P_a$; since $\phi_1$ is close to identity, $z$ and $z'$ are close to each other; these two facts imply that $z$ and $z'$ are close to some critical point of $P_a$; then the definition of $\phi_1$ near such critical point implies $z=z'$.
Any continuous injection of the sphere to itself is surjective, and this applies to $\phi_1$.\footnote{Surjectivity can also be justified directly: near critical and co-critical points, it follows from the explicit formula for $\phi_1$, away from them, because the fibers $P_a^{-1}(z)$ and $P_b^{-1}(z)$ have the same cardinality, are at distance less than $d_a/10$, while their points are mutually at distance at least $d_a$, then recall how $\phi_1$ was defined on $P_a^{-1}(z)$.}
\end{proof}


\section{Open questions}\label{sec:open}

We recall that the polynomials in $\cal P[\theta]$ have degree $d$ and that
$I_{k}$ for $1\leq k\leq d-1$ denotes the subset of $\cal P[\theta]$ where there are exactly $k$ critical point in $\partial \Delta(P)$, counted with multiplicity.

For each bounded type rotation number:
\begin{enumerate}
  \item Is the maximum locus a single point? If not, what is the topology of this compact set?
  \item Is the maximum of $\rad(P)$ only reached on $I_{d-1}$?
\end{enumerate}

Recall $B(\theta)\in(\theta,+\infty]$ denotes the Bruno sum of $\theta\in\R$.
When $\theta$ varies in the set of bounded type numbers, $B(\theta)$ remains finite but can take arbitrarily big values.
We expect the typical value of $\rad(P)$, when $P\in\cal P[\theta]$ is in the connected locus, to be around $\exp(-B(\theta))$.
The most symmetric polynomial in $\cal P[\theta]$ is $P_0: z\mapsto z^d+e^{i2\pi \theta}z$, as it commutes with a rotation of order $d-1$ fixing $0$.
It is known (see \cite{BC1}) that $\rad(P_0)$ is sometimes small, yet much bigger than $\exp(-B(\theta))$.
\begin{enumerate}[resume]
  \item Is the maximum of $\rad(P)$ reached at $P_0$ when $\theta$ is the golden mean?
  \item For a more general bounded type $\theta$, is $\rad(P)$ locally maximal at $P_0$?
  \item Can one characterize the polynomial(s) for which the maximum is reached?
\end{enumerate}

There are polynomials with neutral fixed points with Bruno rotation numbers, and whose corresponding Siegel disk boundary does not contains any critical point.

\begin{enumerate}[resume]
  \item How to define the analogue of $I_{k}$ for a general Bruno rotation number?
  \item Is our main theorem still true for all Bruno rotation numbers?
\end{enumerate}

\bigskip

The following result is a direct consequence of \cite{BC1}, Proposition~A.3 page~346:
\begin{theorem}
  For every Bruno number $\theta$, the function $P\mapsto \log \rad(P)$ is pluri-super-harmonic in $\cal P[\theta]$.
\end{theorem}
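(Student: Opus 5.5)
The statement to prove is that $P\mapsto \log\rad(P)$ is pluri-super-harmonic on $\cal P[\theta]$ for every Bruno $\theta$. I would write $\log\rad$ as a decreasing limit, or an infimum, of pluriharmonic or plurisuperharmonic functions that are built from objects depending holomorphically on the parameter $a\in\C^{d-2}$. Pluri-super-harmonicity is local and is preserved under decreasing limits, provided the limit is not identically $-\infty$, and under locally uniformly bounded infima that are upper semicontinuous. Continuity of $\rad$ is given by \Cref{thm:r:cont}, so semicontinuity comes for free and only the sub-mean-value inequality on holomorphic disks has to be checked. It therefore suffices to fix a holomorphic disk $t\in\D\mapsto a(t)\in\C^{d-2}$ and prove that $u(t)=\log\rad(P_{a(t)})$ is superharmonic.

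\emph{Step 1.} I would use the linearizing series. Let $\psi_a(z)=z+\sum_{n\ge2}b_n(a)z^n$ solve $P_a\circ\psi_a=\psi_a\circ R_\theta$. The coefficients $b_n(a)$ are polynomials in $a$, because the recursion divides only by the constants $\rho^n-\rho$. The radius of convergence $r(a)$ satisfies
\[
\log r(a)=\liminf_{n\to\infty}-\tfrac1n\log|b_n(a)|.
\]
Each function $-\tfrac1n\log|b_n(a(t))|$ is superharmonic in $t$. Moreover, $\psi_a$ on its disk of convergence is exactly the inverse linearization onto the Siegel disk: its image is a rotation domain, hence contained in $\Delta$, and conversely the inverse of the Siegel linearizer has a Taylor series converging on $B(0,\rad\Delta)$. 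Hence $r(a)=\rad(P_a)$. This holds for polynomials because $\psi_a$ is univalent on its disk of convergence.

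\emph{Step 2.} This is the main obstacle. A $\liminf$ of superharmonic functions need not be superharmonic; one gets only an infimum of a family whose lower regularization is superharmonic. I would write
\[
u=\liminf_n v_n=\sup_N\inf_{n\ge N}v_n.
\]
Each $w_N=\inf_{n\ge N}v_n$ is an infimum of superharmonic functions, so its lower semicontinuous regularization $w_N^{*}$ is superharmonic; this uses local upper boundedness of $v_n$, which follows from a Cauchy estimate and the lower bound on $\rad$ given by Brjuno–Yoccoz. The sequence $w_N^{*}$ is increasing. An increasing limit of superharmonic functions is superharmonic once it is lower semicontinuous, and its lower semicontinuous regularization equals it up to a polar set. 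By continuity of $u$ (\Cref{thm:r:cont}) the regularization is $u$ itself. Hence $u$ is superharmonic, which gives the claim.

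If this regularization argument fails, I would fall back on the route of \cite{BC1}, Proposition~A.3: the sub-mean-value inequality follows from the maximum principle, by building over the disk a holomorphic family of linearizing maps $\psi_{a(t)}$ of radius $\exp(h(t))$, where $h$ is the harmonic function with boundary values below $u$, and checking univalence by the argument principle.
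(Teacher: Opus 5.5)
Your main argument is correct, but it is not what the paper does. The paper gives no proof of its own: it quotes the statement as a direct consequence of \cite{BC1}, Proposition~A.3, which is your fallback. Your primary route is a Hartogs-lemma argument, self-contained modulo standard potential theory. The linearizer's coefficients $b_n(a)$ are polynomials in $a$; for polynomials the radius of convergence equals $\rad(P_a)$, so $\log\rad=\liminf_n -\frac1n\log|b_n|$; and Brelot--Cartan regularization removes the $\liminf$. It has two costs. First, it is tied to polynomials, or to maps whose Siegel disk is compactly contained in the domain, through $r(a)=\rad(P_a)$. Second, it uses \Cref{thm:r:cont} essentially: without lower semicontinuity of $\rad$, which is the hard direction, you would only obtain a superharmonic function agreeing with $\log\rad$ off a polar set. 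Continuity lets you conclude because a superharmonic function is the limit of its area means.

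Your Step~2 can in fact be bypassed. Let $h=\Re H$ be harmonic on a disk $D$, continuous on $\ov D$, with $h\le u$ on $\partial D$. The Cauchy bound $|b_n(a(t))|\le M\,\rad(P_{a(t)})^{-n}$, with $M$ bounding the filled Julia sets over $\ov D$, gives $|b_n(a(t))e^{nH(t)}|\le M$ on $\partial D$. The maximum principle extends this bound to $D$, and Cauchy--Hadamard then yields $u\ge h$ on $D$. Together with continuity, this is superharmonicity, with no regularization needed.

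Some directions are reversed in your write-up. In the opening paragraph, superharmonicity is preserved by \emph{increasing} limits, not decreasing ones, and by infima whose \emph{lower} semicontinuous regularization is taken. In Step~2, the regularized infimum $w_N$ requires the $v_n$ to be locally bounded \emph{below}, not above. That lower bound is exactly what your Cauchy estimate provides, since it bounds $\frac1n\log|b_n|$ from above. Step~2 itself uses the correct versions of these facts, so these are slips rather than gaps.
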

This means that the restriction of $a \in\C^{d-2}\mapsto -\log \rad(P_a)$ to  every complex dimension one submanifold of $\C^{d-2}$ is subharmonic.
Equivalently, that in the sense of distributions, the $dd^c$ operator (a sort of pluri-Laplacian) applied to the function $a\mapsto -\log \rad(P_a)$ gives a positive current. It is a $(1,1)$-closed current, let us denote it $T$, which is a commonly used notation for this type of current.

Since $T$ is the $dd^c$ of a locally bounded (actually, continuous) plurisubharmonic function, its $n$-th exterior power is well-defined for all $n\geq 0$ (\cite{Klimek}, Section~3.4).

\begin{enumerate}[resume]
  \item\label{item:Q:support} For $1\leq k\leq d-2$, is the support of $T^{\wedge k}$ equal to $I_{k+1}$?
\end{enumerate}

In the case $d=3$, this has been proved in \cite{C:size}, Theorem~42.

\begin{lemma}\label{lem:rom}
Let $g$ be a continuous and plurisubharmonic function defined on a dimension $n$ complex manifold.
Assume that $g$ reaches its minimum and that the minimum locus $K$ is compact.
Then $K$ intersects the support of $(dd^c g)^{\wedge n}$.
\end{lemma}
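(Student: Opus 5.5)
We argue by contradiction, working with the Monge--Ampère measure $\mu=(dd^c g)^{\wedge n}$, which is well defined since $g$ is continuous plurisubharmonic. Suppose $K\cap\on{supp}\mu=\emptyset$. Then there is a relatively compact open neighborhood $\Omega$ of $K$ with $\mu(\Omega)=0$. Since $K$ is compact, we may shrink $\Omega$ so that it sits inside a finite union of coordinate charts and $\partial\Omega$ is compact.

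Let $m=\min g$, attained exactly on $K$. By continuity and compactness of $\partial\Omega$, which is disjoint from $K$, we have $m':=\min_{\partial\Omega} g>m$. We now build a competitor that should lie below $g$ on $\partial\Omega$ but above it somewhere on $K$. Pick $x_0\in K$ and work in a chart around $x_0$ with coordinate $z$. Consider
\[ v(x)=m+\delta-\eps|z(x)-z(x_0)|^2 \]
on $\Omega$, where $v$ is only needed near $K$. To get a global function on $\Omega$, we instead use $v=m+\delta-\eps\psi$, with $\psi$ a smooth strictly plurisubharmonic function on a neighborhood of $\ov\Omega$. Such a $\psi$ can be obtained by shrinking $\Omega$ to a small ball around a point of $K$, as explained below. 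We choose $\delta<m'-m$ and $\eps$ small enough that $v<g$ on $\partial\Omega$. At $x_0$ we get $v(x_0)>m=g(x_0)$ as soon as $\eps\psi(x_0)<\delta$. Thus the open set $\{v>g\}$ is nonempty and relatively compact in $\Omega$.

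The key step is the Bedford--Taylor comparison principle (\cite{Klimek}, Section~3.7). For continuous plurisubharmonic $u,w$ on a bounded domain with $\liminf(u-w)\geq0$ at the boundary, it gives
\[ \int_{\{u<w\}}(dd^c w)^{\wedge n}\leq\int_{\{u<w\}}(dd^c u)^{\wedge n}. \]
We apply this with $u=g$ and $w=-v$ arranged appropriately. Note that $v$ as written is plurisuperharmonic, so we should use $w=m+\delta+\eps\psi'$ with $\psi'$ strictly plurisubharmonic and negative-valued, for instance $\psi'=|z-z(x_0)|^2-r^2$ on a ball. Then $w$ is plurisubharmonic, $w<g$ on $\partial\Omega$, $w(x_0)>g(x_0)$, and $(dd^c w)^{\wedge n}=\eps^n(dd^c\psi')^{\wedge n}$ is a positive multiple of Lebesgue measure. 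The left-hand side is then strictly positive, because $\{g<w\}$ is a nonempty open set. The right-hand side is at most $\mu(\Omega)=0$. This is a contradiction.

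The main obstacle is the localization: $K$ need not lie in a single chart, so we cannot take $\Omega$ to be a neighborhood of all of $K$ inside a ball. The fix is to take $\Omega$ to be a small coordinate ball $B(x_0,r)$ centered at a point $x_0\in K$. We need $\min_{\partial B} g>g(x_0)$, which fails in general, since $K$ may cross $\partial B$. So instead we choose $x_0$ as a point of $K$ that is extremal for a strictly plurisubharmonic exhaustion-like function: in a chart, take $x_0$ maximizing $|z|^2$ over the part of $K$ in the chart, or handle the general manifold case by working near a point where $K$ is extremal. In such a ball we replace $\psi'$ by a function that is larger on $K\cap\partial B$, adding a linear peak function so that $w<g$ on $\partial B$. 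Getting this peaking construction right is the delicate point. The remaining steps, namely positivity of $(dd^c\psi')^{\wedge n}$ and the comparison inequality, are standard.
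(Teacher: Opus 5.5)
\textbf{Gap.} The last paragraph is a real gap, and it cannot be closed: on a general complex manifold of dimension $\geq 2$ the statement is false, so no peaking construction exists.

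Take $M=\mathbb{P}^1\times\C$ and $g(p,t)=|t|^2$. Then $g$ is continuous and plurisubharmonic, and its minimum locus $K=\mathbb{P}^1\times\{0\}$ is compact. Moreover $(dd^c g)^{\wedge 2}\equiv 0$, because $g$ depends on a single coordinate. So $K$ does not meet the (empty) support.

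Both of your localizations fail in this example:
\begin{itemize}
\item There is no strictly psh $\psi$ near $K$. It would restrict to a strictly subharmonic function on the compact curve $K$, which must be constant.
\item Let $x_0\in K$ be any point and $B=\{|z|^2+|t|^2<r^2\}$ a coordinate ball centered at $x_0$. The disc $K\cap B$ has its boundary circle in $\partial B$, where $g=m$. By the maximum principle on that disc, every psh $w$ with $w\leq g$ on $\partial B$ satisfies $w(x_0)\leq m$. This holds whatever peak function you add and whichever $x_0$ you choose.
\end{itemize}

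\textbf{What does hold.} The lemma is true when $g$ is defined on an open subset of $\C^n$, and this is the case the paper needs. The lemma is only applied on $\C^{d-2}$, and the paper's proof rests on Klimek's Corollary~3.7.6, which concerns open subsets of $\C^n$.

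In that setting your third paragraph is already a complete proof, and the localization problem does not arise. Take:
\begin{itemize}
\item $\Omega$ a bounded neighborhood of $K$ with $\mu(\Omega)=0$;
\item $r$ so large that $\ov\Omega\subset B(x_0,r)$;
\item $\delta<m'-m$ and $\eps r^2<\delta$;
\item $w=m+\delta+\eps(|z-x_0|^2-r^2)$.
\end{itemize}
The comparison principle then gives $0<\eps^n\int_{\{g<w\}}(dd^c|z|^2)^{\wedge n}\leq\mu(\Omega)=0$, a contradiction.

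\textbf{Comparison with the paper.} This is the paper's argument unpacked. The paper quotes that vanishing Monge--Amp\`ere measure makes $g|_V$ maximal, and then compares $g$ with the constant $m+\eps$. The standard proof of that maximality statement is exactly the comparison principle applied to a constant perturbed by $\eps|z|^2$, which is what you do.

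So restrict the statement to open subsets of $\C^n$ and delete your final paragraph.
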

\begin{proof}
If this were not the case, there would exist an open neighborhood $V$ of $K$ on which $(dd^c g)^{\wedge n}=0$.
We can arrange so that $V$ is compactly contained in the domain of $g$.
By Corollary~3.7.6 of \cite{Klimek} $g|_V$ is \emph{maximal} (see the definition at the beginning of Section~3.1 of \cite{Klimek}) and in particular satisfies the minimum principle (apply the definition of maximal functions with $u=g$ and $v=$ a constant).
Denote $m$ the minimum value of $g$.
Since $g\geq m+\eps$ on $\partial V$ for some $\eps>0$, the minimum principle implies that $g\geq m+\eps$ on $K$, leading to a contradiction.
\end{proof}
Applying this to $a\mapsto g(a)=-\log \rad(P_a)$, this gives us an alternate route to prove our main theorem:
\emph{if one can prove that the support of $dd^c T^{\wedge(d-2)}$ is contained\footnote{this is a weak form of the case $d-2$ of \eqref{item:Q:support}} in $I_{d-1}$, then our main theorem follows from \Cref{lem:rom}}.
This argument was communicated to us by Romain Dujardin.

\medskip

A laminar current is a current that is locally the integral of currents of integration over a measured family of disjoint disks, see \cite{BLS}.
\begin{enumerate}[resume]
  \item Is $T$ laminar?
\end{enumerate}

Motivated by the existing surgery from Blaschke fractions, and Zakeri's comments on the parameter space of such fractions (\cite{Zakeri-cubic}, Theorem~7.1), a daring conjecture would be the following:
\begin{conjecture}
Point \eqref{item:Q:support} holds.
The support of $T = dd^c (\lambda \in \Lambda \mapsto \log \rad(P_\lambda))$ is homeomorphic to the support of $dd^c f$ where $f(\lambda) = \min (|c_1|,\ldots,|c_{d-1}|)$ where we recall that $\lambda=(c_1,\ldots,c_{d-1})$ are the critical points of $P_\lambda$, whose product $c_1\cdots c_{d-1}$ is imposed by the rotation number.
As for $f$, the difference $\on{Supp} T - \on{Supp} T \wedge T =  I_{2}- I_{3}$ consists in a finite union over pairs of indices $i<j$ of critical points, of $d-2$-dimensional topological submanifolds foliated by a one real parameter family of $d-3$-dimensional complex submanifolds, the parameter being the relative angle between the two critical points on $\partial \Delta(P)$.
And $T$ is the sum over $i,j$ of the integral with respect to the Lebesgue measure on the relative angle, of the integration currents on these manifolds.
A similar description holds for $\on{Supp} T^{\wedge k} - \on{Supp} T^{\wedge (k+1)}$ and $T^{\wedge k}$, still by analogy with $f$.
\end{conjecture}


\section{Illustrations}\label{sec:illu}

\begin{figure}
  \centering
  \includegraphics[width=\linewidth]{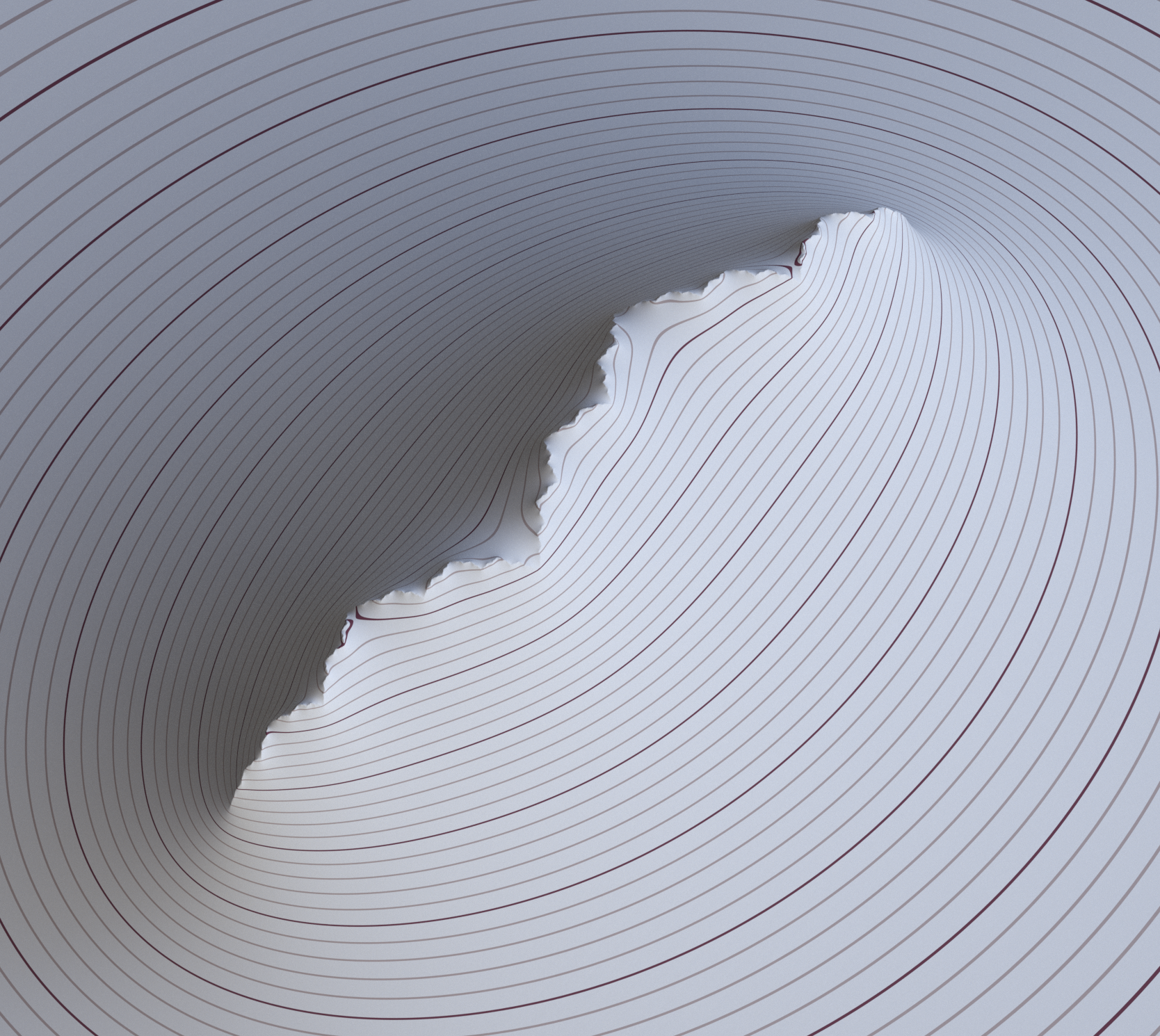}
  \caption{See the caption in \Cref{sec:illu}} 
  \label{fig:1}
\end{figure}

\begin{figure}
  \centering
  \includegraphics[width=19.5cm,angle=90]{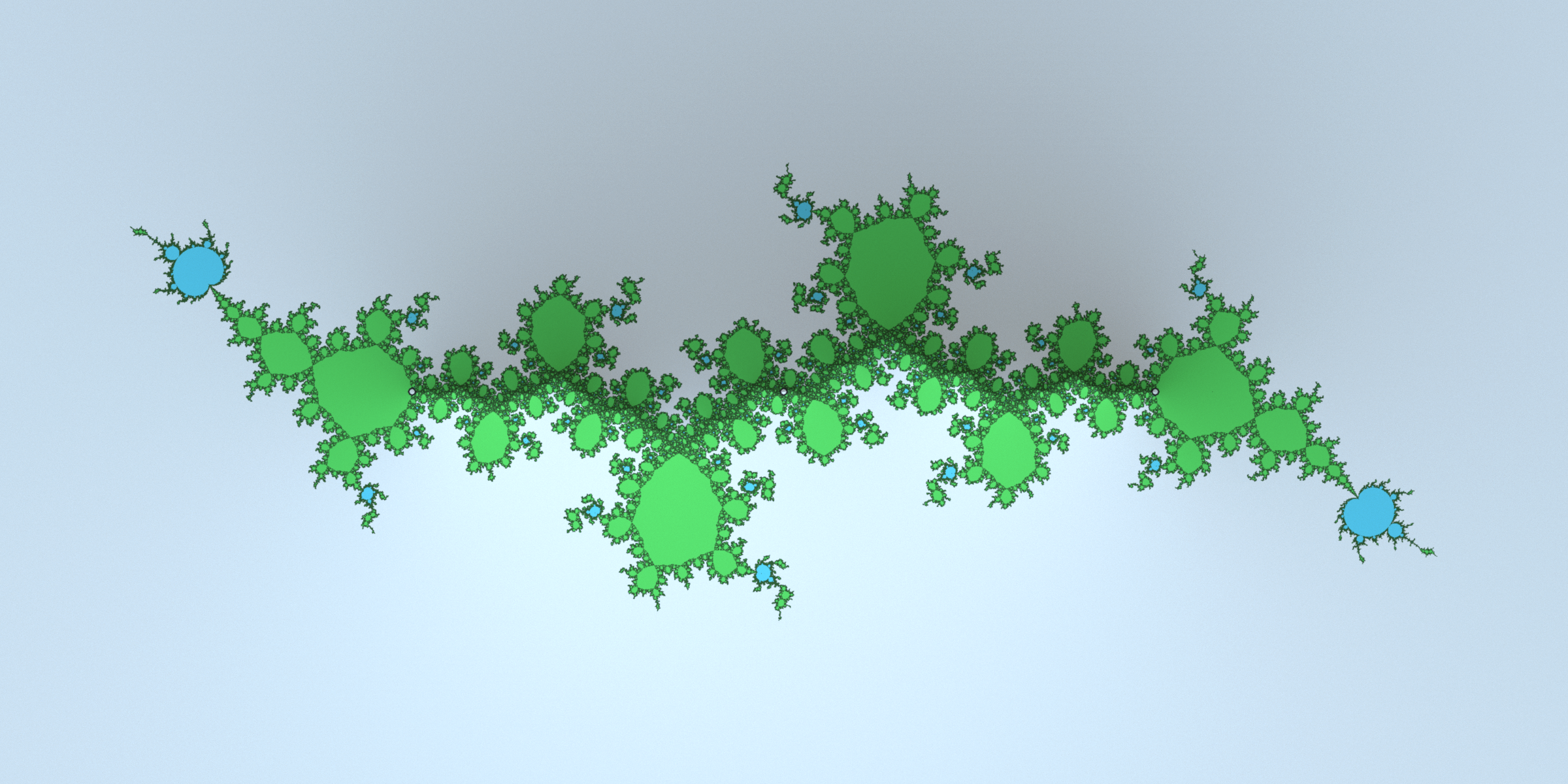}
  \caption{See the caption in \Cref{sec:illu}.}
  \label{fig:2}
\end{figure}

\Cref{fig:1} shows a 3D rendering of an oblique view on the graph of the function $a\in\C \mapsto \log \rad(P_a)$ with $\theta = \frac{\sqrt{5}-1}{2}$, $P_a = z^3+az^2+e^{2\pi i\theta} z$.
The conformal radius of $\Delta(P)$ is approximated by
\[\frac{1}{N}\sum_{n=0}^{N-1}\log |P^n(c)|\]
where $c$ is a point in $\partial \Delta(\theta)$ and here $N=1000$.
To guess which of $c_1$ or $c_2$ belongs $\partial \Delta(\theta)$ we used a hypothetical property: that for this value of $\theta$ and $d$, for all $a\in\C$, denoting $c$ a critical point in $\partial \Delta(P_a)$ and $v=P_a(c)$ the associated critical value, the Siegel disk contains a symmetric lens of vertices $0$ and $v$ opening angle some fixed $\alpha>0$ (we used $\alpha = 2\arctan(1/20)$).
Then a critical point whose orbit enters the lens attached to its own critical value cannot be on $\partial \Delta(P)$.
The actual algorithm involves some complications that we do not detail here.
Level curves are drawn, whose separation has been chosen for its visibility and informative character.
The maximum seems to be at, or near, the center of symmetry of the picture, which is at parameter $a=0$.
It looks like a mountain, whose crest is expected to be the support of $T$, i.e.\ the complement of the locus where $a\mapsto \log\rad(P_a)$ is locally harmonic.

\Cref{fig:2} shows a top view of \Cref{fig:1}, with the bifurcation locus of the family $a\mapsto P_a$ overlaid in dark green. The picture has been rotated by $90$ degrees. 
Three black dots mark $a=0$, for which $P_a$ commutes with $z\mapsto -z$ and the parameters $a=-\sqrt{3}$ and $a=\sqrt{3}$, for which the two critical points of $P_a$ coincide.
The cyan domains are the parameters for which there is an attracting cycle.
The lighter green domains are the parameters for which one of the critical points eventually falls in the Siegel disk.
It follows from \cite{Zakeri-cubic} that the set $I_{2}$ (parameters for which both critical points are on $\partial \Delta(P)$) is a Jordan arc.
It coincides with the support of $T$ according to \cite{C:size}, Theorem~42.

\printbibliography

\end{document}